\newtheorem{theorem}{Theorem}[section]
\newtheorem{definition}[theorem]{Definition}
\newtheorem{proposition}[theorem]{Proposition}
\newtheorem{lemma}[theorem]{Lemma}
\newtheorem{corollary}[theorem]{Corollary}
\newtheorem{remark}[theorem]{Remark}
\DeclareMathOperator*{\esssup}{ess\,sup}
\title{Hard congestion limit of the dissipative Aw-Rascle system with a polynomial offset function}
\author{Muhammed Ali Mehmood\thanks{Imperial College London, London, United Kingdom; muhammed.mehmood21@imperial.ac.uk}}
\date{\today}
\begin{document}
\maketitle
\thispagestyle{empty}

\begin{abstract}
We study the Aw-Rascle system in a one-dimensional domain with periodic boundary conditions, where the offset function is replaced by the gradient of the function $\rho_{n}^{\gamma}$, where $\gamma \to \infty$. The resulting system resembles the 1D pressureless compressible Navier-Stokes system with a vanishing viscosity coefficient in the momentum equation and can be used to model traffic and suspension flows. We first prove the existence of a unique global-in-time 
classical solution for fixed $n$. Unlike the previous result for this system, we obtain global existence without needing to add any approximation terms to the system. This is by virtue of a $n-$uniform lower bound on the density which is attained by carrying out a maximum-principle argument on a suitable potential, $W_{n} = \rho_{n}^{-1}\partial_{x}w_{n}$. Then we prove the convergence to a weak solution of a hybrid free-congested system as $n \to \infty$, which is known as the hard-congestion model.
\end{abstract} \vspace{1ex}
\noindent
\textbf{Keywords:} Aw-Rascle system, maximal packing, weak solutions, hard-congestion. \\[1ex]
\textbf{MSC:} 35Q35, 35B25, 76T20, 90B20.

\section{Introduction}
\subsection{The Aw-Rascle system}
This paper aims to study a singular limit pertaining to the following generalisation of the Aw-Rascle \cite{aw2000resurrection} and Zhang \cite{zhang_non-equilibrium_2002} system:
\begin{subequations} 
\newcommand{\mystrut}{\vphantom{\pder{}{}}}
\begin{numcases}{}
    \partial_{t} \rho_{n} + \partial_{x}(\rho_{n} u_{n}) = 0, &\text{ in } $\Omega \times [0, \infty), $ \label{AR-1} \\[1ex]
    \partial_{t} (\rho_{n} w_{n}) + \partial_{x} (\rho_{n} w_{n} u_{n}) = 0, &\text{ in } $\Omega \times [0, \infty). $ \label{AR-2}
\end{numcases}
\end{subequations}
We take $\Omega = \mathbb{T}$ to be the one-dimensional torus, which we identify with $[0,1]$. This system is to be solved for $\rho_{n}$ and $u_{n}$ which represent the density and the (actual) velocity respectively. The quantity $w_{n}$ that appears in the above system is known as the desired velocity and differs from the actual velocity $u_{n}$ by a cost (or offset) function $C_{n}$, which typically depends on the density $\rho_{n}$. More precisely, $w_{n}$ is defined by the relation
\begin{equation} \label{wucost} \tag{1c}
    w_{n} = u_{n} + C_{n}(\rho_{n}).
\end{equation} The cost function may be interpreted as a quantity which measures the difficulty of moving in a certain direction. In this paper, we consider the case where our cost function is the gradient of a function $p_{n}(\rho_{n})$, i.e.
    \begin{equation*}
    C_{n}(\rho_{n}) = \partial_{x} p_{n}(\rho_{n}),
    \end{equation*}
    where \begin{equation}
        p_{n}(\rho_{n}) = \rho_{n}^{\gamma_{n}}.    \label{pressure-n}
    \end{equation}
Here, $\left\{\gamma_{n}\right\}_{n = 1}^{\infty}$ is a sequence of positive real numbers satisfying $\gamma_{n} \to \infty$ as $n \to \infty$. We note that the singularity of this function only appears when considering the limit as $n \to \infty$; in the region where $\rho_{n} > 1$, we have $p_{n}(\rho_{n}) \to +\infty$ as $n \to \infty$. Our goal is to determine the existence of a solution to the above system for $n$ fixed before investigating the limiting behaviour of the solution as $n \to \infty$. 
\subsection{Background and motivation}
The derivation and mathematical analysis of traffic flow models has been an active area of research over the past hundred years, with some notable early works devoted to hydrodynamic models being \cite{lighthill_kinematic_1955, daganzo_requiem_1995, zhang_non-equilibrium_2002, gazis1961nonlinear}. The Aw-Rascle model, which is given by \eqref{AR-1}-\eqref{wucost} with a scalar offset $C(\rho) = \rho^{\gamma},~\gamma>1$,  is one of the most famous examples of such models. The derivation of this model from the ‘Follow-The-Leader' microscopic model for one-lane traffic can be seen in \cite{derivationFTLAR}. Much analysis has been devoted to the Aw-Rascle model with different choices of offset functions and domains over the past twenty years. We refer to \cite{HCL, chaudhuri_analysis_2022, locallycosntrainedAR, CHAPLYGINpressure, resonanceAr} for some recent examples. It is known that although the model proposed by Aw and Rascle in \cite{aw2000resurrection} was an improvement upon previous iterations of traffic flow models, the system still exhibits some non-physical behaviour. For instance, solutions are assumed to adhere to the maximal density constraint $\rho < \overline{\rho}$ initially but proceed to violate it in finite time. 
 In order to overcome these unrealistic behaviours, Berthelin et al \cite{berth2008degond} suggested to study the asymptotic limit of the system \eqref{AR-1}-\eqref{AR-2} accompanied by a singular scalar offset function $p_{n}(\rho_{n})$ instead of $\rho^{\gamma}$. A recent paper by Chaudhuri et al \cite{HCL} followed this suggestion and studied the case where the offset function is equal to the gradient of a singular function, i.e.
\begin{equation*}
  w_{\epsilon} = u_{\epsilon} + \partial_{x}p_{\epsilon}(\rho_{\epsilon}),~\hspace{5pt}p_{\epsilon}(\rho_{\epsilon}) =  \epsilon \frac{\rho_{\epsilon}^{\gamma}}{(1-\rho_{\epsilon})^{\beta}},
\end{equation*}in a one-dimensional domain with periodic boundary conditions. With this choice, the density is now prevented from surpassing a maximal threshold $\overline{\rho} \equiv 1$. The authors go on to study the asymptotic limit $\epsilon \to 0$ which is known as the 'hard-congestion limit', and establish the existence of solutions $(\rho, u, \pi)$ to the limiting system
\begin{subequations} \label{HCL-LIMIT}
\newcommand{\mystrut}{\vphantom{\pder{}{}}}
\begin{numcases}{}
    \partial_{t} \rho + \partial_{x} (\rho u) = 0, \label{HCL-L1}   \\[1ex]
    \partial_{t} (\rho  u + \partial_{x}\pi) + \partial_{x}((\rho u + \partial_{x}\pi)u)  = 0,  \label{HCL-L2} \\[1ex]
    0 \le \rho \le 1,~ (1-\rho)\pi = 0,~ \pi \ge 0, \label{HCL-L3}
\end{numcases}
\end{subequations} where $\pi$ is the limit of some singular function of $\rho_{\epsilon}$. This is known as the hard congestion model. The interest in this system, which is an example of a free-congested system, emerges from the observation that it repairs the aforementioned issues with the classical Aw-Rascle model. In particular, from \eqref{HCL-L3} we notice that the potential $\pi$ obtained in the singular limit is zero except when the density is maximal, where it acts similar to a Dirac measure. This reflects an important characteristic of traffic flow, which is that drivers do not typically slow down unless there is congestion. The authors of \cite{HCL} prove two main results for their system; the existence of a unique global strong solution for $\epsilon$ fixed, and the existence of a subsequence converging to a solution of the hard-congestion model as $\epsilon \to 0$. 

Our paper builds upon the work of Chaudhuri et al \cite{HCL} and attempts to prove analogous results for the case where the singular offset function is of the form $p_{n}(\rho_{n}) = \rho_{n}^{\gamma_{n}}$. One particular piece of motivation for this problem is that our form of offset function makes it easier to perform numerical simulations (such as in \cite{mohammadian_improved_2018,andreianov_solutions_2016, sheng_concentration_2022}) and investigate the behaviour of solutions than the model in \cite{HCL}, for example. Additionally, our results could be used in conjunction with those in \cite{HCL} in order to analyse the Aw-Rascle system with a more complex offset function and/or in the multi-dimensional case. One example of a multi-dimensional study of the Aw-Rascle system can be seen in a recent paper by Chaudhuri, Gwiazda and Zatorska \cite{chaudhuri_analysis_2022}. 

For our analysis it will be useful to note that if we fix $n$ with this choice of cost function, the system \eqref{AR-1}-\eqref{AR-2} can be formally rewritten as the one-dimensional compressible pressureless Navier-Stokes equations
\begin{subequations} \label{A}
\newcommand{\mystrut}{\vphantom{\pder{}{}}}
\begin{numcases}{}
    \partial_{t} \rho_{n} + \partial_{x} (\rho_{n} u_{n}) = 0, &\text{ in } $\mathbb{T} \times [0, \infty), $  \label{a1} \\[1ex]
    \partial_{t} (\rho_{n} u_{n}) + \partial_{x}(\rho_{n} u_{n}^{2}) - \partial_{x}(\lambda_{n}(\rho_{n})\partial_{x}u_{n}) = 0, &\text{ in } $\mathbb{T} \times [0, \infty),$  \label{a2}
\end{numcases}
\end{subequations}where $\rho_{n}, u_{n} : \mathbb{T} \times [0, \infty) \to \mathbb{R}$ are to be found and \begin{equation*}
    \lambda_{n}(\rho_{n}) = \rho_{n}^{2}p_{n}'(\rho_{n}), ~p_{n}(\rho_{n}) = \rho_{n}^{\gamma_{n}}, ~\gamma_{n} \in (0, \infty).
\end{equation*}
The systems \eqref{AR-1}-\eqref{AR-2} and \eqref{a1}-\eqref{a2} are equivalent for sufficiently regular solutions, and in particular for the class of regular solutions which we will consider. Interestingly, a similar approximation of a two-phase system was carried out by Lions and Masmoudi \cite{Lions1999}, where the authors consider a compressible Navier-Stokes system with a pressure $\pi = \rho^{\gamma}$ and study the limit $\gamma \to \infty$. The presence of a constant viscosity coefficient allows the authors of \cite{Lions1999} to control the gradient of the velocity $\nabla\mathbf{u}$ which gives way to a crucial uniform bound on the singular pressure. Although our system is pressureless, we note that the potential $\pi_{n}$ which is defined through the relation
\begin{equation} 
    \pi_{n}'(\rho_{n}) := \rho_{n}p'_{n}(\rho_{n}) = \gamma_{n} \rho_{n}^{\gamma_{n}},   \label{pi1}
\end{equation} plays a similar role to the pressure in the classical compressible Navier-Stokes model. We need to control this term in order to obtain the switching relation \eqref{HCL-L3} in the limit system. However, the presence of a degenerate viscosity coefficient in our system means that we cannot bound the potential in the same way that the pressure was bounded by the authors of \cite{Lions1999}. For this reason, we need to carry out an improved potential estimate.
\subsection{Main results}
In this paper we adopt the Bochner space notation $X_{t}Y_{x} := X(0,T; Y(\mathbb{T}))$ for appropriate function spaces $X$ and $Y$. We first provide a precise definition of regular solutions to the system 
\eqref{a1}-\eqref{a2}, which are also classical.
\begin{definition}[Global regular solution] \label{defstrongsoln}
    Suppose $n \in \mathbb{Z}^{+}$ is fixed, $T>0$ and $p_{n}$ is given by \eqref{pressure-n}. Assume further that $(\rho_{n}^{0}, u_{n}^{0}) \in H^{4}(\mathbb{T}) \times H^{4}(\mathbb{T})$ and $0 < \rho_{n}^{0}(\cdot)$. The pair $(\rho_{n}, u_{n})$ is called a regular solution to \eqref{a1}-\eqref{a2} on $[0,T]$ if \begin{equation*}
        \rho_{n} \in C(0,T; H^{4}(\mathbb{T})), ~u_{n} \in C(0,T; H^{4}(\mathbb{T})) \cap L^{2}(0, T ; H^{5}(\mathbb{T))},
    \end{equation*} and $(\rho_{n}, u_{n})$ satisfy \eqref{a1}-\eqref{a2} in $\mathbb{T} \times [0,T]$. The pair $(\rho_{n}, u_{n})$ is known as a global regular solution to \eqref{a1}-\eqref{a2} if it is a regular solution on $[0,T]$ for any $T>0$.
\end{definition}
We will prove two main results. Firstly, we will assert the existence of a unique global regular solution in the following result:
\begin{theorem}[Global existence of a unique regular solution for fixed $n$] \label{globalexistence}
    Assume $~(\rho_n^{0}, u_n^{0}) \in H^{4}(\mathbb{T}) \times  H^{4}(\mathbb{T})$ and that $n \in \mathbb{Z}^{+}$ is fixed. Further assume that $ 0 < \rho_{n}^{0}(x)$ for $x \in \mathbb{T}$. Then there exists a unique pair $(\rho_{n}, u_{n})$  with initial data $(\rho_n^{0}, u_n^{0})$ which is a global regular solution to \eqref{a1}-\eqref{a2} in the sense of Definition \ref{defstrongsoln}.
\end{theorem}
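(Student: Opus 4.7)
The approach combines standard local well-posedness with global-in-time a priori estimates; the novelty, as advertised in the abstract, is an $n$-fixed density lower bound obtained by a maximum-principle argument applied to the new transported potential $W_n := \rho_n^{-1}\partial_x w_n$, which removes the need for the approximation schemes used in earlier works on related systems. For local existence and uniqueness, since $\rho_n^0>0$, on a short time interval the viscosity $\lambda_n(\rho_n) = \gamma_n \rho_n^{\gamma_n+1}$ stays uniformly positive and \eqref{a2} is uniformly parabolic in $u_n$ once $\rho_n$ is fixed; a standard linearization and fixed-point iteration alternating a transport solve for $\rho_n$ with a quasilinear parabolic solve for $u_n$, carried out in $H^4$-based spaces, produces a unique regular solution on some $[0, T^\ast]$ with $\rho_n > 0$ throughout.

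\emph{Global a priori estimates.} To extend the solution to arbitrary $T>0$ one needs $n$-fixed bounds on $\rho_n$, $1/\rho_n$, and higher Sobolev norms. The Aw-Rascle form \eqref{AR-1}--\eqref{AR-2} implies the transport identity $\partial_t w_n + u_n \partial_x w_n = 0$, so $\|w_n(t)\|_{L^\infty} \le \|w_n^0\|_{L^\infty}$ and $\int \rho_n \Phi(w_n)\,dx$ is conserved for any smooth $\Phi$; in particular mass and $\int \rho_n w_n^2\,dx$ are constant in time. Combining conservation of $\int \rho_n w_n^2\,dx$ with the kinetic-energy dissipation obtained by testing \eqref{a2} against $u_n$ and using Cauchy--Schwarz gives a uniform $L^2$ bound on $\partial_x \rho_n^{\gamma_n+1/2}$, hence by one-dimensional Sobolev embedding (together with mass conservation) an $L^\infty$ upper bound on $\rho_n$. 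For the lower bound, a short calculation---differentiating the $w_n$-equation in $x$ and combining with the continuity equation---reveals that $W_n$ itself is transported, $\partial_t W_n + u_n \partial_x W_n = 0$, so $\|W_n(t)\|_{L^\infty} \le \|W_n^0\|_{L^\infty}$. Since $\partial_x u_n = \rho_n W_n - \partial_x^2 \rho_n^{\gamma_n}$, the material derivative satisfies $D_t(1/\rho_n) = W_n - \rho_n^{-1}\partial_x^2 \rho_n^{\gamma_n}$; at a spatial maximum of $1/\rho_n$ one has $\partial_x \rho_n = 0$ and $\partial_x^2 \rho_n \ge 0$, whence $\partial_x^2 \rho_n^{\gamma_n} = \gamma_n \rho_n^{\gamma_n-1}\partial_x^2\rho_n \ge 0$, and the pointwise maximum-principle comparison yields $\|1/\rho_n(t)\|_{L^\infty} \le \|1/\rho_n^0\|_{L^\infty} + t\|W_n^0\|_{L^\infty}$.

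\emph{Higher regularity, continuation, and uniqueness.} With $\rho_n$ bounded above and below, \eqref{a2} is uniformly parabolic with smooth coefficients, so a standard bootstrap---differentiating the system and performing energy estimates at successively higher orders, with commutator terms absorbed using the bounds derived above---propagates $H^4$ regularity of $(\rho_n, u_n)$ on any $[0,T]$, closing the continuation argument. Uniqueness follows from a routine energy estimate on the difference of two regular solutions using the density bounds. The main obstacle is establishing the transport identity for $W_n$, which relies on a delicate algebraic cancellation between the term $-\partial_x u_n \partial_x w_n$ arising from differentiating the $w_n$-equation and the material derivative of $\rho_n^{-1}$, and then running the pointwise maximum-principle argument rigorously at the $H^4$ level; once these are in place the remaining ingredients follow from variations of standard techniques.
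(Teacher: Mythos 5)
Your proposal is correct and follows essentially the same route as the paper: local existence by iteration, energy estimates yielding control of $\sqrt{\rho_n}\,\partial_x p_n(\rho_n)$ and hence an $L^\infty$ upper bound on $\rho_n$ via one-dimensional Sobolev embedding, the transport identity $\partial_t W_n + u_n\partial_x W_n = 0$ for $W_n=\rho_n^{-1}\partial_x w_n$ combined with a maximum principle for $1/\rho_n$ to get the lower bound $\|1/\rho_n(t)\|_{L^\infty}\le \|1/\rho_n^0\|_{L^\infty}+t\max_{\mathbb{T}}W_n^0$, and a blow-up/continuation criterion plus uniqueness. The only differences are cosmetic (the paper runs the Sobolev embedding on $H_n=\rho_n^{\gamma_n+1}/(\gamma_n+1)$ in $W^{1,1}$ rather than on $\rho_n^{\gamma_n+1/2}$ in $H^1$, and proves uniqueness via local uniqueness rather than a Gronwall estimate on the difference).
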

\begin{remark}
The $H^{4}$ regularity on the initial data mentioned in Definition \ref{defstrongsoln} and Theorem \ref{globalexistence} is required in order for us to obtain a lower bound on the density. Our strategy demands that on the level of local-in-time solutions the desired velocity $w_{n} \in C_{t}H^{3}_{x}$, which itself requires $\rho_{n}, u_{n} \in C_{t}H^{4}_{x}$. To work with such solutions using Theorem \ref{localexistence} (local existence of solutions), we must take $(\rho_{n}^{0}, u_{n}^{0}) \in H^{4}(\mathbb{T}) \times H^{4}(\mathbb{T})$.
\end{remark}
We then assert the existence of weak solutions to the limiting system in the following theorem:
\begin{theorem}[Global existence of a weak solution to the hard-congestion model] \label{limitexistence} Assume $~(\rho_n^{0}, u_n^{0}) \in H^{4}(\mathbb{T}) \times  H^{4}(\mathbb{T})$ and the existence of constants $C, \alpha, \hat{\rho} > 0$ independent of $n$ such that
\begin{align}
 &0 < \rho_{n}^{0}(x) \le 1 + \frac{1}{\gamma_{n}},~~\forall x \in \mathbb{T}, \label{thm2RHObounds} \\[1ex]   &0 < \alpha \le |\mathbb{T}|^{-1}\int_{\mathbb{T}}\rho_{n}^{0}(x)~dx \le \hat{\rho} < 1, \label{thm2meanvalue} \\[1ex]
&  \|\sqrt{\rho_{n}^{0}}w_{n}^{0}\|_{L^{2}_{x}} + \left\|\frac{\partial_{x}w_{n}^{0}}{\sqrt{\rho_{n}^{0}}}\right\|_{L^{2}_{x}}  \le C. \label{thm2wn}
    \end{align} Then, the solution $(\rho_{n}, u_{n})$ established in Theorem \ref{globalexistence} satisfies the following uniform bounds for $\gamma_{n} > 1$ and $\tau \in (0,T]$:
\begin{align}
    &\|w_{n}\|_{L^{\infty}_{t}W^{1,\frac{4}{3}}_{x}} \le C, \label{thm2wnuniformbound}\\[1ex]
    &\|\pi_{n}\|_{L^{1}_{t,x}} + \|\partial_{x}\pi_{n}\|_{L^{2}_{t,x}} \le C, \\[1ex]
    &\|\pi_{n}\|_{L^{\infty}([\tau, T]; L^{1}(\mathbb{T}))} \le C. \label{thm2pilocal}
\end{align}
    where $C>0$ is independent of $n$. Suppose additionally that
    \begin{align}
        &\rho_{n}^{0} \rightharpoonup \rho^{0} \text{ weakly in } L^{2}(\mathbb{T}), \label{idconvergence1} \\[1ex]
        &\rho_{n}^{0}w_{n}^{0} \rightharpoonup \rho^{0}w^{0} \text{ weakly in } L^{2}(\mathbb{T}). \label{idconvergence2}
    \end{align} Then there exists a subsequence $(\rho_{n}, w_{n}, \pi_{n})$ of solutions to \eqref{a1}-\eqref{a2} with initial data $(\rho_{n}^{0}, w_{n}^{0})$ which converges to $(\rho, w, \pi)$ solving:
    \begin{subequations} 
\newcommand{\mystrut}{\vphantom{\pder{}{}}}
\begin{numcases}{}  \label{HCM-WF1}
   -  \int_{0}^{t} \int_{\mathbb{T}} \rho \partial_{t} \phi ~dxds + \int_{\mathbb{T}} \rho(x,t)\phi(x,t) - \rho^{0}(x)\phi(x,0)~dx + \int_{0}^{t} \int_{\mathbb{T}} \partial_{x} \pi \partial_{x} \phi ~dxds= \int_{0}^{t} \int_{\mathbb{T}} \rho w \partial_{x} \phi ~dxds, \\[1ex]
   -  \int_{0}^{t} \int_{\mathbb{T}} \rho w \partial_{t}\phi ~dxds + \int_{\mathbb{T}}\rho w \phi(x,t) - \rho^{0} w^{0}(x) \phi (x,0)~dx +  \int_{0}^{t} \int_{\mathbb{T}} (w \partial_{x} \pi - \rho w^{2})\partial_{x}\phi ~dxds =  0. \label{HCM-WF2} \\[1ex]
   0 \le  \rho \le 1,~(1-\rho)\pi = 0,~ \pi \ge 0. \label{HCM-WF3}
\end{numcases}
\end{subequations} with initial data $(\rho^{0}, w^{0})$. Additionally, our solution possesses the following regularity for $p \in [1,\infty)$ and $\tau \in (0,T)$:
    \begin{align} \label{solnrhoreg}
        &\rho \in C_{weak}(0,T; L^{p}(\mathbb{T})) \cap L^{\infty}( (0,T) \times \mathbb{T}), \\[1ex] \label{solnwreg}
        &w \in C(0,T; W^{1,\frac{4}{3}}(\mathbb{T})), \\[1ex]
        &\pi \in \mathcal{M}((0,T) \times \mathbb{T}) \cap L^{\infty}([\tau, T]; L^{1}(\mathbb{T})) \cap L^{2}([\tau, T]; L^{2}(\mathbb{T})). \label{solnpireg}
    \end{align}
\end{theorem}
\begin{remark}
    The upper bound in assumption \eqref{thm2RHObounds} and the bound on $\sqrt{\rho_{n}^{0}}w_{n}^{0}$ in \eqref{thm2wn} are required to obtain important uniform bounds from the additional energy estimate (Lemma \ref{bdentropy}), and also to obtain the bound \eqref{thm2pilocal}. The second bound appearing in \eqref{thm2wn} is a crucial part of our argument since it allows us to uniformly bound $\partial_{x}w_{n}$ and even $\partial_{t}w_{n}$ in $L^{p}_{t,x}$ spaces. The lower bound on the mean value in \eqref{thm2meanvalue} is used to apply the Poincare inequality which gives us a uniform $L^{\infty}_{t}L^{1}_{x}$ bound on $w_{n}$. Lastly, the upper bound assumption in \eqref{thm2meanvalue} is used to derive the bound \eqref{thm2pilocal}.
\end{remark}
\begin{remark}
    If we additionally assume that there exist constants $c, C > 0$ independent of $n$ such that \begin{align*}
        0 < c \le \inf_{\mathbb{T}} \rho_{n}^{0}, ~~ \esssup_{\mathbb{T}} \frac{\partial_{x}w_{n}^{0}}{\rho_{n}^{0}} \le C,
    \end{align*}
     then as a consequence of the proof of Theorem \ref{globalexistence} we can show that the sequence of densities $\rho_{n}$ is uniformly bounded away from zero, i.e. $0 < c \le \rho_{n}$. As a result the density for the limiting system inherits the same lower bound.
\end{remark}
\begin{remark}
     Note that the limiting system \eqref{HCM-WF1}-\eqref{HCM-WF3} is slightly different in appearance to that of \cite{HCL}, since our system is written in terms of the desired velocity $w$ rather than the actual velocity $u$. One can also notice the assumptions on the initial data \eqref{thm2RHObounds}-\eqref{thm2wn} are weaker than those appearing in \cite{HCL}, in the sense that we do not assume any control over $\partial_{x}\pi_{n}^{0}$ or the singular quantity $\lambda_{n}(\rho_{n}^{0})\partial_{x}u_{n}^{0}$. This is made possible through our decision to work with the $w-$formulation rather than the $u-$formulation. It is also worthwhile to mention that if we define the function $u := w - \overline{\rho^{-1}\partial_{x} \pi}$ where $\overline{\rho^{-1}\partial_{x} \pi}$ is the weak limit of $\rho_{n}^{-1}\partial_{x} \pi_{n}$, the limiting system is equivalent to \eqref{AR-1}-\eqref{AR-2} (without the index $n$) in the distributional sense, accompanied by the condition \eqref{HCM-WF3}. 
\end{remark}
The global existence result given by Theorem \ref{globalexistence} is an improvement upon the work of \cite{HCL} where the authors added an approximation term to the system in order to derive an ($\epsilon$ dependent) lower bound on the density. Our derivation of the lower bound involves identifying a suitable potential for the Aw-Rascle system $W_{n}:=\partial_{x}w_{n}/\rho_{n}$ and carrying out a maximum-principle argument to deduce that the maximum of $W_{n}$ is a decreasing function over time. This approach takes inspiration from \cite{Constantin_2020}, \cite{Burtea_2020} and shows that an approximation term is not necessary in order to obtain a (uniform) lower bound on the density for a generalised Aw-Rascle system.  In fact, this argument also gives us control over the quantity $\partial_{x}w_{n}$ which is essential for the proof of Theorem \ref{limitexistence}. Another difference between our paper and previous works on the Aw-Rascle system \cite{HCL, chaudhuri_analysis_2022} is that we pass to the limit in the so-called '$w-$formulation'
\begin{subequations} 
\newcommand{\mystrut}{\vphantom{\pder{}{}}}
\begin{numcases}{} \label{limsystem1}
    \partial_{t} \rho_{n} + \partial_{x}(\rho_{n} w_{n} - \partial_{x}\pi_{n}) = 0, \\[1ex] 
    \partial_{t} (\rho_{n} w_{n}) + \partial_{x} (\rho_{n} w_{n}^{2}) - \partial_{x} (w_{n} \partial_{x} \pi_{n}) = 0, \label{limsystem2}
\end{numcases}
\end{subequations}
rather than the '$u-$formulation' seen in \eqref{a1}-\eqref{a2}. The existence of a lower bound on the density (obtained in the proof of Theorem \ref{globalexistence}) makes this formulation more convenient, since we may divide by $\rho_{n}$ in the momentum equation \eqref{limsystem2} and directly obtain energy estimates on $\partial_{x}w_{n}$. In \cite{HCL} where the $u-$formulation was used instead, the authors were required to assume that the singular quantity $\lambda_{n}\partial_{x}u_{n}$ is uniformly bounded at time $t=0$ to complete the limit passage. We do not need to make such an assumption by working with $w_{n}$. It is worthwhile to note that in the distributional sense, our limiting system in $w-$form is (compare this with \eqref{HCL-L1}-\eqref{HCL-L3}):
\begin{subequations} 
\newcommand{\mystrut}{\vphantom{\pder{}{}}}
\begin{numcases}{} 
    \partial_{t} \rho + \partial_{x}(\rho w - \partial_{x} \pi) = 0, \label{w1-intro} \\[1ex]
    \partial_{t} (\rho w) + \partial_{x} (\rho w^{2}) - \partial_{x} (w \partial_{x} \pi) = 0, \label{w2-intro} \\[1ex]
    0 \le  \rho \le 1,~(1-\rho)\pi = 0,~ \pi \ge 0. \label{w3-intro}
\end{numcases}
\end{subequations}
\subsection{Overview of the paper}
The paper is comprised as follows. In Sections 2.1/2.2 we make note of a local existence result and three key energy estimates. These allow us to obtain an upper bound of the density after which we prove a 'blow-up' lemma in Section 2.4, which is analogous to what can be seen in Theorem 1.1 of \cite{Constantin_2020}. The blow-up lemma tells us that provided the density is positive on a domain $\mathbb{T} \times [0,T)$, our local solution can be extended past time $T$. In Section 2.5 we use this result to carry out a maximum-principle argument on the potential $W_{n}:=\partial_{x}w_{n}/\rho_{n}$ in order to show that the density $\rho_{n}$ is bounded from below on $\mathbb{T} \times [0,T]$. This implies that our solution exists globally, thanks to the blow-up lemma.

The second half of the paper (Sections 3 and 4) is dedicated towards the limit passage. We take advantage of the transport equation satisfied by $w_{n}$ and $W_{n}$ to uniformly bound $w_{n}$ in $L^{\infty}_{t}W^{1,p}_{x}$, which eventually leads to strong convergence on $w_{n}$. This is particularly useful when passing to the limit in the non-linear terms appearing in \eqref{limsystem2}. A key obstacle standing in our way at this point is acquiring a bound on $\pi_{n}$ and $\partial_{x} \pi_{n}$. Using the same strategy as the authors of \cite{HCL} which is to test the momentum equation with an antiderivative of the density will not work for us, since our assumptions on the initial data are considerably weaker, and so we have access to fewer uniform bounds. In particular, we have no $L^{p}_{t,x}$ estimates on the momentum $\rho_{n}u_{n}$ at this stage. This prevents us from bounding many of the terms which would appear in the momentum equation after testing. To overcome this, we take a very intricate choice of test function in the continuity equation which gives us a $L^{1}_{t,x}$ bound on $\pi_{n}$. This delicate estimate is what allows us to converge towards a weak solution even with our relatively weak assumptions on the initial data. The last uniform bound we collect is a slightly stronger local-in-time estimate for $\pi_{n}$ that is needed to derive the switching relation $(1-\rho)\pi=0$ for the limiting system. The uniform bounds which we obtain for $\pi_{n}$ are weaker than those appearing in \cite{HCL}, which is to be expected. Nonetheless we show that it is still possible to obtain a weak solution to the limiting system.
\section{Existence of a unique global regular solution for fixed $n$}
\subsection{Local existence}
The existence of a unique local regular solution for fixed $n$ can be shown in a very similar way to Proposition B.1 in \cite{Constantin_2020}. The full details of the proof are omitted from this paper.
\begin{theorem}[Existence of a unique local regular solution] \label{localexistence}    Assume $~(\rho_{0}, u_{0}) \in H^{k}(\mathbb{T}) \times  H^{k}(\mathbb{T})$, $k \ge 1$ and that $ \displaystyle r_{0} := \min_{x \in \mathbb{T}} \rho_{0} > 0$. Then there exists $T_{0}>0$ (depending solely on $r_{0}$ and the initial data) and a unique solution $(\rho, u)$ to \eqref{A} for $t \in [0,T_{0}]$ with initial data $(\rho_{0}, u_{0})$ such that \ \begin{equation*}
        \rho \in C(0,T_{0}; H^{k}(\mathbb{T})), ~u \in C(0,T_{0}; H^{k}(\mathbb{T})) \cap L^{2}(0, T_{0} ; H^{k+1}(\mathbb{T))}.
    \end{equation*} Additionally, we have that $\rho(x,t) \ge \frac{r_{0}}{2} \text{ for each} ~(x,t) \in \mathbb{T} \times [0,T_{0}]$.
\end{theorem}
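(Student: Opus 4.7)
The plan is to construct the solution via a Picard-type iteration, linearising the quasilinear system into a transport equation for $\rho$ coupled with a linear parabolic equation for $u$, in the spirit of Appendix~B of \cite{Constantin_2020}. The nondegeneracy of the viscosity, once one has a uniform lower bound on the iterated density, is what makes the parabolic step standard.

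First, I would define iterates $(\rho^{(k)}, u^{(k)})$ starting from $(\rho_{0}, u_{0})$ as follows. Given $u^{(k)}$, let $\rho^{(k+1)}$ solve the linear transport equation $\partial_{t} \rho^{(k+1)} + \partial_{x}(\rho^{(k+1)} u^{(k)}) = 0$ with $\rho^{(k+1)}(\cdot,0) = \rho_{0}$. The method of characteristics gives the representation $\rho^{(k+1)}(X^{(k)}(t,x),t) = \rho_{0}(x) \exp\bigl(-\int_{0}^{t} \partial_{x} u^{(k)}(X^{(k)}(s,x),s)\,ds\bigr)$, where $X^{(k)}$ is the flow of $u^{(k)}$. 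Assuming a uniform $L^{\infty}_{t,x}$ bound on $\partial_{x} u^{(k)}$, this yields $r_{0}/2 \le \rho^{(k+1)} \le 2\|\rho_{0}\|_{L^{\infty}}$ on a short interval $[0,T_{0}]$. Next, using the continuity equation to rewrite \eqref{a2}, define $u^{(k+1)}$ as the solution of the linear equation $\rho^{(k+1)} \partial_{t} u^{(k+1)} + \rho^{(k+1)} u^{(k)} \partial_{x} u^{(k+1)} - \partial_{x}\bigl(\lambda_{n}(\rho^{(k+1)}) \partial_{x} u^{(k+1)}\bigr) = 0$ with datum $u_{0}$. Because $\rho^{(k+1)} \ge r_{0}/2$, the diffusion coefficient $\lambda_{n}(\rho^{(k+1)}) = \gamma_{n} (\rho^{(k+1)})^{\gamma_{n}+1}$ is bounded below uniformly in $k$, so the equation is uniformly parabolic and standard theory provides a solution in $C(0,T_{0}; H^{k}(\mathbb{T})) \cap L^{2}(0,T_{0}; H^{k+1}(\mathbb{T}))$.

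The second step is to derive $H^{k}$ a priori bounds that are uniform in the iteration index. Applying $\partial_{x}^{j}$ for $j \le k$ to each equation and testing against $\partial_{x}^{j} \rho^{(k+1)}$ and $\partial_{x}^{j} u^{(k+1)}$ respectively, one obtains energy inequalities of the form $\tfrac{d}{dt} E^{(k+1)}(t) \le C(n, r_{0})\bigl(1 + E^{(k+1)}(t)\bigr)^{q}$, where the constant absorbs the contributions from $(\rho^{(k)}, u^{(k)})$ via the preceding iterate's uniform bound. A Gronwall argument on a short interval $[0,T_{0}]$ produces a uniform-in-$k$ bound. One then upgrades convergence by showing that the map is a contraction in the weaker norm $C(0,T_{0}; L^{2}(\mathbb{T}))$: the differences $\delta \rho^{(k)} = \rho^{(k+1)}-\rho^{(k)}$ and $\delta u^{(k)} = u^{(k+1)}-u^{(k)}$ satisfy linear equations whose coefficients are controlled via the uniform $H^{k}$ bound, giving $\|(\delta \rho^{(k)}, \delta u^{(k)})\|_{L^{\infty}_{t} L^{2}_{x}} \le \tfrac{1}{2}\|(\delta \rho^{(k-1)}, \delta u^{(k-1)})\|_{L^{\infty}_{t} L^{2}_{x}}$ after further shrinking $T_{0}$. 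Interpolation with the $H^{k}$ bound upgrades to convergence in any intermediate Sobolev norm, and the limit $(\rho,u)$ readily satisfies \eqref{A} in the stated regularity class. Uniqueness follows from the same contraction estimate applied to two putative solutions. The density lower bound $\rho(x,t) \ge r_{0}/2$ passes to the limit directly from the characteristic representation, since $u \in C(0,T_{0}; H^{k}(\mathbb{T}))$ with $k \ge 1$ combined with the 1D Sobolev embedding $H^{1}(\mathbb{T}) \hookrightarrow L^{\infty}(\mathbb{T})$ (and, for the pointwise derivative bound, the regularity in $L^{2}_{t}H^{k+1}_{x}$ together with $k \ge 1$ yielding $\partial_{x}u \in L^{1}_{t}L^{\infty}_{x}$) is enough to control the exponential factor by $2$ after shrinking $T_{0}$ further.

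The main obstacle is the highly nonlinear diffusion coefficient $\lambda_{n}(\rho) = \gamma_{n}\rho^{\gamma_{n}+1}$: commuting $\partial_{x}^{j}$ with the diffusion operator produces products involving up to $j$ spatial derivatives of $\rho$ multiplied by polynomial expressions in $\rho$ and its derivatives, which must be bounded using the Moser-type algebra estimates in $H^{k}(\mathbb{T})$ together with the uniform upper and lower bounds on $\rho^{(k+1)}$. Since $n$ (and hence $\gamma_{n}$) is fixed throughout the local existence statement, these constants depend on $n$ but are finite, which suffices for local well-posedness; the task of making the bounds $n$-independent is deferred to the global-existence and limit-passage arguments in subsequent sections.
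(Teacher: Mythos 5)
Your proposal is correct and follows essentially the same route as the paper, which itself omits the details and simply invokes the iterative scheme of Proposition B.1 in \cite{Constantin_2020}: linearise into a transport step for the density and a uniformly parabolic step for the velocity (using the lower bound on the iterated density to keep $\lambda_{n}$ nondegenerate), close uniform $H^{k}$ bounds on the iterates, and contract in a weaker norm. The characteristic representation for the density lower bound and the $n$-dependent (but finite) constants are exactly as intended there.
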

Starting with some initial data  $(\rho_{n}^{0}, u_{n}^{0}) \in H^{4}(\mathbb{T}) \times H^{4}(\mathbb{T})$, we may take $k=4$ in Theorem \ref{localexistence} to obtain the existence of a solution $(\rho_{n}, u_{n})$ to \eqref{a1}-\eqref{a2} on $[0,T_{0}]$ for some $T_{0} > 0$, where $n \in \mathbb{Z}^{+}$ is fixed. Let us now denote by $T^{*}$ the maximal time of existence of our solution $(\rho_{n}, u_{n})$. The purpose of this section is show that our solution can be extended to one that is defined globally in time. 
\subsection{Energy estimates} \label{sectionenergy}
In this subsection, we assume that $(\rho_{n},u_{n})$ is a regular solution to \eqref{A} on some time interval $[0,T]$ and that $\rho_{n} \ge 0$ on $\mathbb{T} \times [0,T]$.  Our aim is to establish three uniform in time energy estimates which we will need in order to extend our solution to one that is defined globally in time. 
Our first two estimates are classical; the first is a consequence of the conservation of mass.
\begin{lemma}[Conservation of mass]\label{consofmass}
    Assume that $(\rho_{n}, u_{n})$ is a regular solution to \eqref{a1}-\eqref{a2} on the time interval $[0,T]$ and additionally that $\rho_{n}(\cdot, t) \ge 0$ on this interval. Then, 
    \begin{equation} \vspace{-10pt} \label{E1} 
        \|\rho_{n}(t)\|_{L^{1}_{x}} = \underbrace{\|\rho_{n}^{0}\|_{L^{1}_{x}}}_{=:E_{n}^{0}},
    \end{equation} for all $t \in [0,T]$.
\end{lemma}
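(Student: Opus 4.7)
The statement is the standard conservation of mass identity, and the plan is straightforward. The key observation is that $\rho_n \in C(0,T;H^4(\mathbb{T}))$ is regular enough that the continuity equation \eqref{a1} holds pointwise, and the hypothesis $\rho_n \ge 0$ lets us drop the absolute value bars and identify $\|\rho_n(t)\|_{L^1_x}$ with $\int_{\mathbb{T}} \rho_n(x,t)\,dx$.

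The plan is to integrate \eqref{a1} in space over the torus $\mathbb{T}$. Since $\rho_n u_n \in C(0,T; H^4(\mathbb{T})) \subset C(0,T; C^1(\mathbb{T}))$, the flux $\partial_x(\rho_n u_n)$ is continuous and periodic, so its integral over $\mathbb{T}$ vanishes by the fundamental theorem of calculus. This yields
\begin{equation*}
\frac{d}{dt} \int_{\mathbb{T}} \rho_n(x,t)\,dx = -\int_{\mathbb{T}} \partial_x(\rho_n u_n)(x,t)\,dx = 0
\end{equation*}
for every $t \in [0,T]$, where the interchange of $\partial_t$ with $\int_{\mathbb{T}}$ is justified by the $C(0,T;H^4)$ regularity of $\rho_n$. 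Integrating this identity in time from $0$ to $t$ gives $\int_{\mathbb{T}} \rho_n(x,t)\,dx = \int_{\mathbb{T}} \rho_n^0(x)\,dx$, and invoking $\rho_n(\cdot,t) \ge 0$ together with $\rho_n^0 \ge 0$ converts both sides into the respective $L^1_x$ norms, yielding \eqref{E1}.

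There is no real obstacle here; the only point that deserves a line of care is the use of the non-negativity assumption (without which the conclusion would only state conservation of the mean $\langle \rho_n \rangle$ rather than of the $L^1$ norm). This will later be superseded by the strict positivity of $\rho_n$ established in Section \ref{sectionproofglobal}, but at this stage we only need $\rho_n \ge 0$, which is part of the standing hypothesis.
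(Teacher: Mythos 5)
Your proof is correct and follows exactly the paper's (one-line) argument: integrate \eqref{a1} over space and time, using periodicity to kill the flux term and non-negativity to identify the integral of $\rho_{n}$ with its $L^{1}$ norm. The extra care you take in justifying the interchange of derivative and integral and in flagging where $\rho_{n}\ge 0$ is used is a welcome elaboration, not a deviation.
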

The next energy estimate is derived from the momentum equation upon multiplying by $u_{n}$ and integrating by parts.
\begin{lemma}[Basic energy]\label{basicenergy}
    Assume that $(\rho_{n}, u_{n})$ is a regular solution to \eqref{A} on the time interval $[0,T]$ and additionally that $\rho_{n}(\cdot, t) \ge 0$ on this interval. Then,
    \begin{equation} \vspace{-10pt}
        \label{E2} 
        \| \sqrt{\rho_{n}}u_{n}\|^{2}_{L^{\infty}_{t}L^{2}_{x}} + 2\| \sqrt{\lambda_{n}(\rho_{n})}\partial_{x}u_{n}\|^{2}_{L^{2}_{t}L^{2}_{x}} = \underbrace{\| \sqrt{\rho_{n}^{0}}u_{n}^{0}\|^{2}_{L^{2}_{x}}}_{=: E_{n}^{1}}
    \end{equation} for all $t \in [0,T]$.
\end{lemma}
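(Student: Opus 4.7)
The plan is to derive the standard kinetic energy identity for a compressible Navier–Stokes–type system. I would first multiply the momentum equation \eqref{a2} by $u_n$ and subtract $\tfrac{u_n^2}{2}$ times the continuity equation \eqref{a1}. The key algebraic observation is the Leibniz collapse
\[
u_n\,\partial_t(\rho_n u_n) + u_n\,\partial_x(\rho_n u_n^2) - \tfrac{u_n^2}{2}\bigl[\partial_t \rho_n + \partial_x(\rho_n u_n)\bigr] = \partial_t\!\bigl(\tfrac{1}{2}\rho_n u_n^2\bigr) + \partial_x\!\bigl(\tfrac{1}{2}\rho_n u_n^3\bigr),
\]
which is rigorous here thanks to the $H^4$ regularity built into Definition \ref{defstrongsoln}.

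Next I would integrate the resulting identity over $\mathbb{T}$. The convective term vanishes by periodicity, and one integration by parts on the viscous contribution produces no boundary term, yielding the pointwise-in-time balance
\[
\frac{d}{dt}\int_{\mathbb{T}} \tfrac{1}{2}\rho_n u_n^2\,dx + \int_{\mathbb{T}}\lambda_n(\rho_n)(\partial_x u_n)^2\,dx = 0.
\]
Since $\lambda_n(\rho_n) = \gamma_n \rho_n^{\gamma_n+1} \ge 0$ under the standing assumption $\rho_n \ge 0$, the dissipation term is nonnegative. Integrating in time from $0$ to $t$ and multiplying by $2$ gives the identity
\[
\|\sqrt{\rho_n(t)}\,u_n(t)\|_{L^2_x}^2 + 2\int_0^t \|\sqrt{\lambda_n(\rho_n)}\,\partial_x u_n\|_{L^2_x}^2\,ds = E_1.
\]
Taking the supremum over $t\in[0,T]$ of the first term (which is nonincreasing in $t$, hence attains $E_1$ at $t=0$) and combining with the monotonicity of the dissipation integral yields the stated bounds on the $L^\infty_t L^2_x$ and $L^2_t L^2_x$ norms.

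There is no real obstacle here: the estimate is a routine energy identity. The only points requiring mild care are verifying that the $C_t H^4_x \cap L^2_t H^5_x$ regularity of the regular solution justifies each integration by parts and the Leibniz manipulation above, and noting that the sign of $\lambda_n(\rho_n)$ — critical for interpreting the dissipation term as a positive quantity — is automatic from $\rho_n \ge 0$ and the explicit formula $\lambda_n(\rho_n) = \gamma_n\rho_n^{\gamma_n+1}$.
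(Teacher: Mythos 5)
Your proposal is correct and follows essentially the same route as the paper: multiply \eqref{a2} by $u_{n}$, use the continuity equation and the chain rule to collapse the convective terms into a perfect time derivative plus a spatial divergence, integrate by parts over the torus, and integrate in time. You also sensibly read the lemma's ``equality'' as the pointwise-in-time energy identity from which the stated $L^{\infty}_{t}L^{2}_{x}$ and $L^{2}_{t}L^{2}_{x}$ bounds follow, which is exactly the intended content.
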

The final estimate in this section provides us with a bound on $\partial_{x} p_{n}(\rho_{n})$. Here, it is convenient to introduce the notation
    \begin{equation}
        H_{n}(\rho_{n}) := \frac{1}{\gamma_{n}+1}\rho_{n}^{\gamma_{n}+1}. \label{Hnform}
    \end{equation}
\begin{lemma}[Additional energy]\label{bdentropy}
     Assume that $(\rho_{n}, u_{n})$ is a regular solution to \eqref{A} on the time interval $[0,T]$ and additionally that $\rho_{n}(\cdot, t) \ge 0$ on this interval. Then,
    \begin{equation}
        \label{E3} \vspace{-10pt} \| \sqrt{\rho_{n}}w_{n}\|^{2}_{L^{\infty}_{t}L^{2}_{x}} + \|H_{n}(\rho_{n})\|_{L^{\infty}_{t}L^{1}_{x}} + \frac{1}{2}\| \sqrt{\rho_{n}} \partial_{x} p_{n}(\rho_{n})\|^{2}_{L^{2}_{t}L^{2}_{x}} \le \left(T+2\right)\underbrace{\left( \|\sqrt{\rho_{n}^{0}}w_{n}^{0}\|^{2}_{L^{2}_{x}} + \|H_{n}(\rho_{n}^{0})\|_{L^{1}_{x}} \right)}_{=: E_{n}^{2}}.
    \end{equation}
\end{lemma}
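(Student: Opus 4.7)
The plan is to build two identities, combine them, and exploit the non-negativity of $\rho_n u_n^2$ to produce an inequality of the form \eqref{E3}. The first identity is a conservation law for $\rho_n w_n^2$ obtained directly from \eqref{AR-2}; the second comes from squaring the relation $w_n = u_n + \partial_x p_n(\rho_n)$ and recognising a cross term as the time derivative of $\int H_n(\rho_n)$.

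For the first identity, I would multiply \eqref{AR-2} by $w_n$ and apply the chain rule. The resulting expression rearranges as
\[ w_n\partial_t(\rho_n w_n) + w_n\partial_x(\rho_n u_n w_n) = \tfrac{1}{2}\partial_t(\rho_n w_n^2) + \tfrac{1}{2}\partial_x(\rho_n u_n w_n^2) + \tfrac{1}{2}w_n^2\big[\partial_t\rho_n + \partial_x(\rho_n u_n)\big], \]
where the bracketed factor vanishes by \eqref{AR-1}. Integrating over $\mathbb{T}$ then yields $\|\sqrt{\rho_n}w_n(t)\|_{L^2_x}^2 = \|\sqrt{\rho_n^0}w_n^0\|_{L^2_x}^2$, which provides the first term on the left-hand side of \eqref{E3} with equality.

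For the remaining two terms, I would expand via the pointwise identity
\[ \rho_n(\partial_x p_n)^2 = \rho_n w_n^2 - \rho_n u_n^2 - 2\rho_n u_n\,\partial_x p_n(\rho_n). \]
The critical step is to recognise the cross term as a total time derivative: an integration by parts on the torus, followed by the use of $H_n'(\rho) = p_n(\rho)$ and the continuity equation, gives
\[ \int_{\mathbb{T}}\rho_n u_n\,\partial_x p_n(\rho_n)\,dx = -\int_{\mathbb{T}} p_n(\rho_n)\,\partial_x(\rho_n u_n)\,dx = \frac{d}{dt}\int_{\mathbb{T}} H_n(\rho_n)\,dx. \]
Integrating the expanded identity in time over $[0,t]$, discarding the non-positive contribution $-\int_0^t\int \rho_n u_n^2$, and replacing $\int_0^t\int\rho_n w_n^2$ by $t\,\|\sqrt{\rho_n^0}w_n^0\|_{L^2_x}^2$ via the first identity, I obtain
\[ \|\sqrt{\rho_n}\partial_x p_n\|_{L^2_t L^2_x}^2 + 2\|H_n(\rho_n)\|_{L^\infty_t L^1_x} \le T\,\|\sqrt{\rho_n^0}w_n^0\|_{L^2_x}^2 + 2\|H_n(\rho_n^0)\|_{L^1_x}. \]
Dividing this inequality by $2$ and adding the conservation identity yields \eqref{E3}, since $1 + T/2 \le T+2$.

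The one genuine subtlety is the identification of the cross term with $\frac{d}{dt}\int H_n(\rho_n)$; this is precisely where the choice $H_n(\rho) = \rho^{\gamma_n+1}/(\gamma_n+1)$ intervenes, via $H_n' = p_n$. Periodicity eliminates every boundary term, and the $H^4$ regularity of the regular solution makes all chain-rule steps and integrations by parts rigorous. It is worth noting that Lemma \ref{basicenergy} itself is not invoked in this argument beyond the sign information $\rho_n u_n^2 \ge 0$, so Lemmas \ref{basicenergy} and \ref{bdentropy} are essentially independent.
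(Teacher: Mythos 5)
Your proof is correct and follows essentially the same route as the paper: the first half (testing the $w$-form momentum equation with $w_{n}$ and using the continuity equation) is identical, and your cross-term identity $\int_{\mathbb{T}}\rho_{n}u_{n}\partial_{x}p_{n}\,dx=\frac{d}{dt}\int_{\mathbb{T}}H_{n}(\rho_{n})\,dx$ is exactly the content of the paper's step of testing the rewritten mass equation with $H_{n}'(\rho_{n})=p_{n}(\rho_{n})$, since $\rho_{n}w_{n}\partial_{x}p_{n}=\rho_{n}u_{n}\partial_{x}p_{n}+\rho_{n}(\partial_{x}p_{n})^{2}$. The only real difference is that where the paper closes with Young's inequality (keeping only $\tfrac12\|\sqrt{\rho_{n}}\partial_{x}p_{n}\|^{2}$ on the left), you use the exact expansion of $\rho_{n}(\partial_{x}p_{n})^{2}$ and discard $\rho_{n}u_{n}^{2}\ge 0$, which is precisely the defect term in that Young step; both arguments land comfortably within the stated $(T+2)E_{2}$ bound of \eqref{E3}.
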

\begin{proof}
    For regular solutions in the sense of Definition~\ref{defstrongsoln}, \eqref{a2} is equivalent to \begin{equation*}
         \partial_{t} (\rho_{n}w_{n}) + \partial_{x}(\rho_{n} w_{n}u_{n}) = 0, \hspace{4pt} \text{in } \mathbb{T} \times [0,T]. 
    \end{equation*} Multiplying this equation by $w_{n}$ and integrating by parts, it is straightforward to show that
    \begin{equation}
        \|\sqrt{\rho_{n}} w_{n}\|_{L^{\infty}_{t}L^{2}_{x}} \le \| \sqrt{\rho_{n}^{0}} w^{0}_{n}\|_{L^{2}_{x}}. \label{bde1}
    \end{equation} Substituting $w_{n} = u_{n} + \partial_{x} p_{n}(\rho_{n})$ into the mass equation, we get
    \begin{equation*}
        \partial_{t} \rho_{n} + \partial_{x} (\rho_{n}w_{n}) - \partial_{x} \left( \rho_{n} \partial_{x} p_{n}(\rho_{n})\right) = 0,
    \end{equation*}in $\mathbb{T} \times [0,T]$. Multiplying this equation by $H_{n}'(\rho_{n}) = p_{n}(\rho_{n})$, integrating over $\mathbb{T} \times [0,t]$ where $t  \in [0,T]$ and using integration by parts leads to
    \begin{align}
        \int_{\mathbb{T}} H_{n}(\rho_{n}(t)) - H_{n}(\rho_{n}^{0})~dx + \| \sqrt{\rho_{n}} \partial_{x} p_{n}(\rho_{n})\|^{2}_{L^{2}_{t}L^{2}_{x}}  = \int_{0}^{t} \int_{\mathbb{T}} (\partial_{x}p_{n}(\rho_{n})) \rho_{n}w_{n}  ~dxds. \label{bde1.5}
    \end{align} Using Young's inequality,
    \begin{align*}
         \int_{0}^{t} \int_{\mathbb{T}} (\partial_{x}p_{n}(\rho_{n})) \rho_{n}w_{n}  ~dxds \le \frac{1}{2}\|\sqrt{\rho_{n}} \partial_{x}p_{n}(\rho_{n})\|^{2}_{L^{2}_{t}L^{2}_{x}} + \frac{1}{2} \| \sqrt{\rho_{n}}w_{n}\|^{2}_{L^{2}_{t}L^{2}_{x}}.
    \end{align*} Thus, returning to \eqref{bde1.5} and taking the essential supremum over all $t \in [0,T]$, we have
    \begin{equation*}
        \|H_{n}(\rho_{n})\|_{L^{\infty}_{t}L^{1}_{x}} + \frac{1}{2}\| \sqrt{\rho_{n}} \partial_{x} p_{n}(\rho_{n})\|^{2}_{L^{2}_{t}L^{2}_{x}} \le \|H_{n}(\rho_{n}^{0})\|_{L^{1}_{x}} + \frac{1}{2} \| \sqrt{\rho_{n}}w_{n}\|^{2}_{L^{2}_{t}L^{2}_{x}}. 
    \end{equation*} Adding \eqref{bde1} to both sides of this inequality and using the estimate $\|\sqrt{\rho_{n}}w_{n}\|_{L^{2}_{t,x}} \le T\|\sqrt{\rho_{n}}w_{n}\|_{L^{\infty}_{t}L^{2}_{x}}$ gives us the final result.
\end{proof}
\subsection{Estimating the density from above} \label{sectiondensityub}
We now obtain an upper bound for the density.
\begin{lemma} \label{upperboundrho}
    Suppose $(\rho_{n}, u_{n})$ is a regular solution to \eqref{a1}-\eqref{a2} on $[0,T]$ and additionally that $\rho_{n} \ge 0$ on $[0,T]$. Then,
    \begin{equation*}
        \rho_{n}(t,x) \le  \left( C(\gamma_{n}+1)(T+5)E_{n} \right)^{\frac{1}{\gamma_{n}+1}} =: \overline{\rho_{n}}, ~\text{ in } \mathbb{T} \times [0,T],
    \end{equation*} where $C>0$ is independent of $n$ and \begin{equation*}
        E_{n} := E_{n}^{0} + E_{n}^{1} + E_{n}^{2} = \|\rho_{n}^{0}\|_{L^{1}_{x}} + \|\sqrt{\rho_{n}^{0}}u_{n}^{0}\|^{2}_{L^{2}_{x}} + (\|\sqrt{\rho_{n}^{0}}w_{n}^{0}\|^{2}_{L^{2}_{x}} + \|H_{n}(\rho_{n}^{0})\|_{L^{1}_{x}}).
    \end{equation*} 
\end{lemma}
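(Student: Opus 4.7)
The plan is to obtain the pointwise upper bound by estimating $\rho_n^{\gamma_n+1}$ in $L^\infty_{t,x}$ via the one-dimensional Sobolev-type inequality
\[
\|f(t,\cdot)\|_{L^\infty_x} \leq \tfrac{1}{|\mathbb{T}|}\|f(t,\cdot)\|_{L^1_x} + \|\partial_x f(t,\cdot)\|_{L^1_x},
\]
applied with $f = \rho_n^{\gamma_n+1}$ at each fixed $t \in [0,T]$. Taking the $(\gamma_n+1)$-th root at the end then recovers the stated form of $\overline{\rho_n}$.

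The $L^1_x$ piece is immediate from Lemma \ref{bdentropy}, since $\rho_n^{\gamma_n+1} = (\gamma_n+1) H_n(\rho_n)$, which yields $\|\rho_n^{\gamma_n+1}\|_{L^\infty_t L^1_x} \leq (\gamma_n+1)(T+2) E_n$.

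For the derivative term I would compute
\[
\partial_x (\rho_n^{\gamma_n+1}) = \tfrac{\gamma_n+1}{\gamma_n}\, \rho_n\, \partial_x p_n(\rho_n),
\]
and then invoke the defining relation $w_n = u_n + \partial_x p_n$ to rewrite $\rho_n \partial_x p_n = \rho_n(w_n - u_n)$. A Cauchy--Schwarz splitting $\rho_n(w_n - u_n) = \sqrt{\rho_n}\cdot \sqrt{\rho_n}(w_n - u_n)$, combined with the mass conservation $\|\sqrt{\rho_n}\|_{L^2_x}^2 = E_0$ (Lemma \ref{consofmass}), yields
\[
\|\rho_n \partial_x p_n\|_{L^1_x} \leq \sqrt{E_0}\,\bigl(\|\sqrt{\rho_n}\, w_n\|_{L^2_x} + \|\sqrt{\rho_n}\, u_n\|_{L^2_x}\bigr) \leq \sqrt{E_0}\,(\sqrt{E_1} + \sqrt{E_2}),
\]
where Lemmas \ref{basicenergy} and \ref{bdentropy} supply the pointwise-in-$t$ bounds on the two $L^2_x$ norms. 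Applying AM--GM to absorb the square roots gives $\|\partial_x \rho_n^{\gamma_n+1}\|_{L^\infty_t L^1_x} \lesssim (\gamma_n+1) E_n$ for $\gamma_n \geq 1$. Summing with the $L^1_x$ bound produces $\|\rho_n^{\gamma_n+1}\|_{L^\infty_{t,x}} \leq C(\gamma_n+1)(T+5) E_n$, which is the desired inequality.

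The main obstacle is precisely the $L^\infty_t$ requirement: a naive attempt to bound $\|\partial_x \rho_n^{\gamma_n+1}\|_{L^1_x}$ through the dissipation estimate $\|\sqrt{\rho_n}\partial_x p_n\|_{L^2_t L^2_x}^2 \leq 2(T+2)E_2$ of Lemma \ref{bdentropy} delivers only a time-integrated control, insufficient for a pointwise-in-$t$ bound. Exchanging the singular factor $\partial_x p_n$ for $w_n - u_n$ is the key structural observation; it trades the $L^2_t$-controlled dissipation term for two quantities which are bounded in $L^\infty_t L^2_x$ with the $\sqrt{\rho_n}$ weighting, and this is exactly the regularity needed to close the Sobolev estimate.
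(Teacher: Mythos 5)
Your proposal is correct and follows essentially the same route as the paper: both arguments control $\rho_n^{\gamma_n+1}$ (equivalently $H_n$) in $L^\infty_{t,x}$ via the embedding $W^{1,1}(\mathbb{T})\hookrightarrow L^\infty(\mathbb{T})$, bounding the $L^1_x$ part by Lemma \ref{bdentropy} and the derivative $\partial_x(\rho_n^{\gamma_n+1})\propto \rho_n\partial_x p_n$ by the substitution $\partial_x p_n = w_n - u_n$ together with the $L^\infty_t L^2_x$ energy bounds (the paper uses Young's inequality with $\|\sqrt{\rho_n}\partial_x p_n\|_{L^2_x}^2$ where you use Cauchy--Schwarz with $\sqrt{E_0}$, which is an immaterial difference). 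You also correctly identify the key point that the $L^2_t$ dissipation bound alone would not suffice.
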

\begin{proof}
    It follows from the energy estimates \eqref{E2}, \eqref{E3} and the triangle inequality that \begin{align}\begin{aligned}
       \|\sqrt{\rho_{n}}\partial_{x}p_{n}(\rho_{n})\|_{L^{\infty}_{t}L^{2}_{x}}^{2} 
 &=  \|\sqrt{\rho_{n}}(w_{n}-u_{n})\|_{L^{\infty}_{t}L^{2}_{x}}^{2}  \le     \left(  \|\sqrt{\rho_{n}}w_{n}\|_{L^{\infty}_{t}L^{2}_{x}}^{2} +  \|\sqrt{\rho_{n}}u_{n}\|_{L^{\infty}_{t}L^{2}_{x}}^{2} \right) \\[1ex] &\le  (T+3)E_{n}, \label{ub1}\end{aligned}
    \end{align} where $C>0$ denotes an arbitrary constant independent of $n$ and $T$. Using the definition of $p_{n}$, we infer that
    \begin{equation*}
       \partial_{x}H_{n} =  (\gamma_{n})^{-1}\rho_{n} \partial_{x} p_{n}(\rho_{n})
    \end{equation*} and so by virtue of Young's inequality and \eqref{ub1}, \begin{align} \notag
          \|\partial_{x}H_{n}\|_{L^{\infty}_{t}L^{1}_{x}} &\le (2\gamma_{n})^{-1} \esssup_{t \in [0,T]} \left( \int_{\mathbb{T}} \rho_{n}~dx +  \int_{\mathbb{T}} \rho_{n} (\partial_{x}p_{n}(\rho_{n}))^{2}~dx       \right) \\[1ex] \notag &=  (2\gamma_{n})^{-1} \left(\|\rho_{n}\|_{L^{\infty}_{t}L^{1}_{x}} + \|\sqrt{\rho_{n}}\partial_{x}p_{n}(\rho_{n})\|_{L^{\infty}_{t}L^{2}_{x}}^{2} \right) \\[1ex] &\le \frac{(T+4)E_{n}}{2\gamma_{n}} \le (T+4)E_{n} \label{ub1.1}
    \end{align} for $n$ sufficiently large such that $\gamma_{n} > 1/2$. We also know from Lemma \ref{bdentropy} that 
        $\|H_{n}\|_{L^{\infty}_{t}L^{1}_{x}} \le E_{n}$ and so one can deduce from \eqref{ub1.1} and the embedding $W^{1,1}_{x} \hookrightarrow L^{\infty}_{x}$ that
     \begin{equation*}
         \|H_{n}\|_{L^{\infty}_{t,x}} \le C(T+5)E_{n},
     \end{equation*} where $C>0$ arises due to the aforementioned embedding and is a constant independent of $n$ and $T$. Recalling the form of $H_{n}$ from \eqref{Hnform}, this implies
     \begin{equation*}
         \frac{1}{\gamma_{n}+1} \rho_{n}^{\gamma_{n}+1} \le C(T+5)E_{n},
     \end{equation*} from which the result follows.
\end{proof}
\subsection{A blow-up lemma} \label{sectionblowup}
We wish to prove the following result:
\begin{lemma}[Criteria for blow-up of regular solutions] \label{blowup}
    Suppose $(\rho_{n}, u_{n})$ is a regular solution to \eqref{a1} - \eqref{a2} on $[0,T^{*})$ with initial data $(\rho_{n}^{0}, u_{n}^{0}) \in H^{k}(\mathbb{T}) \times H^{k}(\mathbb{T})$ where $k \ge 2$. Then provided that 
    \begin{equation}
        \underline{\rho_{n}} := \inf_{t \in [0,T^{*})} \min_{x \in \mathbb{T}} \rho_{n}(t,x) > 0, 
    \end{equation}
    we have that
    \begin{equation} \label{blow-up-statement}
        \sup_{t \in [0,T^{*})} \|\rho_{n}\|_{L^{\infty}(0,t; H^{k})} + \sup_{t \in [0,T^{*})} \|u_{n}\|_{L^{\infty}(0,t; H^{k})} + \sup_{t \in [0,T^{*})} \|u_{n}\|_{L^{2}(0,t; H^{k+1})} < +\infty,
    \end{equation} and therefore the solution can be extended to a larger time interval $[0,T)$, where $T > T^{*}$. In other words, the solution does not lose regularity unless the density reaches $0$ somewhere in the domain.
\end{lemma}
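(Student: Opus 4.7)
The plan is to combine the assumed lower bound $\underline{\rho_n}>0$ with the upper bound $\overline{\rho_n}$ from Lemma~\ref{upperboundrho} so that $\rho_n$ takes values in a fixed compact subset of $(0,\infty)$ on $\mathbb{T}\times[0,T^{*})$. Since $n$ is fixed throughout this section, this immediately yields uniform positive upper and lower bounds on the viscosity coefficient $\lambda_n(\rho_n)=\gamma_n\rho_n^{\gamma_n+1}$, turning the momentum equation \eqref{a2} into a uniformly parabolic equation for $u_n$ coupled to the transport equation \eqref{a1} for $\rho_n$. The rest of the proof is then the template argument for compressible Navier--Stokes systems with density-dependent viscosity bounded away from zero: propagate $H^k$-regularity by induction, and then invoke the local existence theorem to continue the solution past $T^{*}$.

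Concretely, I would proceed by induction on $j=0,1,\ldots,k$. The base case follows from the energy estimates of Section~\ref{sectionenergy} together with the pointwise density bounds. For the inductive step at level $j$, applying $\partial_x^{j}$ to \eqref{a2} and testing against $\partial_x^{j}u_n$ produces the coercive term $\int_{\mathbb{T}}\lambda_n(\rho_n)(\partial_x^{j+1}u_n)^2\,dx$ on the left-hand side; the remaining terms are commutators involving products of derivatives of $\rho_n$ and $u_n$ of total order at most $j+1$. In one space dimension these are controlled by Moser-type product estimates combined with the embedding $H^{1}(\mathbb{T})\hookrightarrow L^{\infty}(\mathbb{T})$ and the bounds at levels below $j$. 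In parallel, applying $\partial_x^{j}$ to \eqref{a1} and testing against $\partial_x^{j}\rho_n$ yields a transport-type estimate for $\|\rho_n\|_{H^{j}}$ driven by $\|\partial_x u_n\|_{L^{\infty}_x}$, which is in turn controlled by the parabolic estimate on $u_n$. Summing over $j\le k$ produces a Gronwall-type inequality of the form
\[
\frac{d}{dt}\bigl(\|\rho_n\|_{H^{k}}^{2}+\|u_n\|_{H^{k}}^{2}\bigr)+c\,\|u_n\|_{H^{k+1}}^{2}\;\le\;F(t)\bigl(\|\rho_n\|_{H^{k}}^{2}+\|u_n\|_{H^{k}}^{2}\bigr)+G(t),
\]
with $F,G\in L^{1}(0,T^{*})$, from which \eqref{blow-up-statement} follows.

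With these uniform-in-$t$ bounds in hand, I choose $\tau\in[0,T^{*})$ sufficiently close to $T^{*}$ so that $T^{*}-\tau$ is smaller than the time $T_{0}$ of existence provided by Theorem~\ref{localexistence} applied to the data $(\rho_n(\tau),u_n(\tau))\in H^{k}\times H^{k}$ with $\rho_n(\tau)\ge\underline{\rho_n}>0$; this is possible because $T_{0}$ depends only on $\|(\rho_n(\tau),u_n(\tau))\|_{H^{k}}$ and the infimum of $\rho_n(\tau)$, both of which are controlled independently of $\tau$ by the previous step. Gluing the solution thus produced on $[\tau,\tau+T_{0}]$ to the original solution extends it strictly beyond $T^{*}$, contradicting the maximality of $T^{*}$. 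The main technical obstacle is the bookkeeping of the commutator terms involving derivatives of $\lambda_n(\rho_n)$, since $\lambda_n$ is a high power of $\rho_n$ whose derivatives carry large $\gamma_n$-dependent factors; however, $n$ is fixed in this section so such constants are harmless, and the one-dimensional setting combined with the two-sided positive bound on $\rho_n$ makes the estimates standard. Essentially no new ideas beyond the argument of Theorem~1.1 in \cite{Constantin_2020} are required.
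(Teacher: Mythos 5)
Your overall strategy (two-sided pointwise bounds on $\rho_{n}$, hence on $\lambda_{n}(\rho_{n})$, followed by inductive $H^{j}$ energy estimates and continuation via Theorem \ref{localexistence}) reaches the same conclusion, but the way you propose to run the derivative estimates is genuinely different from, and thinner than, what the paper does. The paper never differentiates the momentum equation directly; it introduces the active potential $V_{n}=\lambda_{n}(\rho_{n})\partial_{x}u_{n}$, derives its parabolic evolution equation (Proposition \ref{eqnV}), estimates $\partial_{x}^{l}V_{n}$, and only afterwards converts back to $\partial_{x}^{j}u_{n}$ via $\partial_{x}u_{n}=\lambda_{n}^{-1}V_{n}$. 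The point of this detour is structural: in the $V_{n}$-equation the nonlinearity is $V_{n}^{2}$ with no extra derivatives, and $\|V_{n}\|_{L^{2}_{t,x}}$ is controlled a priori by the basic energy estimate together with the upper bound on $\lambda_{n}$, so the quadratic Gronwall coefficient is integrable on $[0,T^{*})$ and the estimate closes up to $T^{*}$, not merely locally.

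This is exactly the step your sketch glosses over. If you test $\partial_{x}^{j}$ of \eqref{a2} against $\partial_{x}^{j}u_{n}$ and couple it with the transport estimate for $\|\partial_{x}^{j}\rho_{n}\|_{L^{2}_{x}}$, then at the first nontrivial level the two estimates feed into each other superlinearly: the commutator term $\partial_{x}\lambda_{n}\,\partial_{x}u_{n}$ paired with $\partial_{x}^{2}u_{n}$ costs $\|\partial_{x}\rho_{n}\|_{L^{2}_{x}}\|\partial_{x}u_{n}\|_{L^{\infty}_{x}}\|\partial_{x}^{2}u_{n}\|_{L^{2}_{x}}$, while the mass-equation estimate for $\|\partial_{x}\rho_{n}\|_{L^{2}_{x}}$ is driven by $\|\partial_{x}u_{n}\|_{L^{\infty}_{x}}$ and $\|\partial_{x}^{2}u_{n}\|_{L^{2}_{x}}$, which are only controlled once the velocity estimate has already closed. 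The resulting differential inequality is of the form $y'\le C(1+y)^{3}$, which yields only local-in-time control and hence no blow-up criterion; your assertion that $F,G\in L^{1}(0,T^{*})$ is not a consequence of uniform parabolicity alone. To break the circularity one must invoke an independent bound $\|\partial_{x}\rho_{n}\|_{L^{\infty}_{t}L^{2}_{x}}<\infty$, which in this system comes from the Aw--Rascle structure: $\sqrt{\rho_{n}}\partial_{x}p_{n}(\rho_{n})=\sqrt{\rho_{n}}(w_{n}-u_{n})$ is bounded in $L^{\infty}_{t}L^{2}_{x}$ by Lemmas \ref{basicenergy} and \ref{bdentropy}, and the assumed density lower bound then gives the claim (this is item 3 of Corollary \ref{Vest2}). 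If you insert that bound explicitly into your base case, your direct scheme closes linearly at each level, and the remainder of your argument (Moser/commutator estimates, Gronwall, and continuation past $T^{*}$ by applying Theorem \ref{localexistence} at a time $\tau$ close to $T^{*}$, using that the local existence time depends only on the $H^{k}$ norm and the density infimum) is sound and parallel to the paper's.
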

\begin{remark}
    The extension of the solution past $T^{*}$ can be justified as follows. Thanks to \eqref{blow-up-statement}, the pair $(\rho_{n}, u_{n})$ admits a limit as $t \nearrow T^{*}$ and the left-sided derivatives satisfy the system at $t=T^{*}$. Since we have $\rho_{n}(T^{\star}, \cdot) > 0$, we can use the local existence result from Theorem \ref{localexistence} to obtain a unique solution on $[T^{*}, T^{*} + \epsilon)$  for some $\epsilon > 0$. The extension is then given by the concatenation of the solutions on $[0,T^{*}]$ and $[T^{*}, T^{*} + \epsilon)$.
\end{remark}
 Our proof is done by induction with respect to the regularity parameter $k$. The base step corresponds to showing that
\begin{equation*}
    \sup_{t \in [0,T^{*})} \|\rho_{n}\|_{L^{\infty}(0,t; H^{2})} + \sup_{t \in [0,T^{*})} \|u_{n}\|_{L^{\infty}(0,t; H^{2})} + \sup_{t \in [0,T^{*})} \|u_{n}\|_{L^{2}(0,t; H^{3})} < +\infty
\end{equation*} provided $(\rho_{n}^{0}, u_{n}^{0}) \in H^{2}(\mathbb{T}) \times H^{2}(\mathbb{T})$. This is the goal of the current subsection. We remark that this assumption on the initial data allows us to deduce from Theorem \ref{localexistence} (existence of local solutions) that our solution satisfies \begin{equation*}
          \rho_{n} \in C(0,T^{*}; H^{2}(\mathbb{T})), ~u_{n} \in C(0,T^{*}; H^{2}(\mathbb{T})) \cap L^{2}(0, T^{*} ; H^{3}(\mathbb{T))},
    \end{equation*} which allows us to justify the computations which will follow.  The inductive part of the proof is deferred to Appendix A.
\subsubsection{Analysis of the singular diffusion $V_{n}$}
We first define the function
\begin{equation*}
    V_{n} := \lambda_{n}(\rho_{n})\partial_{x}u_{n}
\end{equation*} and establish some basic properties in the form of the next two propositions. The function $V_{n}$ corresponds to the 'active potential' which was first introduced in \cite{Constantin_2020}. The authors of \cite{Constantin_2020} used the active potential to prove higher order regularity estimates for strong solutions to their system, which shares similarities with the system we are considering for $n$ fixed. The authors of \cite{HCL} also made use of this function to prove higher-order regularity estimates for strong solutions for the Aw-Rascle system with a singular pressure. We follow a similar procedure. Firstly, let us find the equation satisfied by $V_{n}$.
\begin{proposition}[Equation for the singular diffusion] \label{eqnV}
    Let $n \in \mathbb{N}^{+}$. Suppose that $(\rho_{n}, u_{n})$ is a regular solution with $k=2$ (in the sense of Definition \eqref{defstrongsoln}) to \eqref{a1}-\eqref{a2} on $\mathbb{T} \times [0,T]$ with initial data $(\rho_{n}^{0}, u_{n}^{0}) \in H^{2}(\mathbb{T}) \times H^{2}(\mathbb{T})$ and $\rho_{n} > 0$ on $[0,T]$. Then, $V_{n}:=\lambda_{n}(\rho_{n})\partial_{x}u_{n}$ satisfies the following equation almost everywhere in $\mathbb{T} \times [0,T]$:
\begin{align}
    \partial_{t} V_{n} + \left(u_{n} + \frac{\lambda_{n}(\rho_{n})}{\rho_{n}^{2}} \partial_{x}\rho_{n} \right) \partial_{x}V_{n} - \frac{\lambda_{n}(\rho_{n})}{\rho_{n}} \partial_{x}^{2}V_{n} = - \frac{(\lambda_{n}'(\rho_{n})\rho_{n} + \lambda_{n}(\rho_{n}))}{(\lambda_{n}(\rho_{n}))^{2}}V_{n}^{2}. \label{Veqn}
\end{align}
\end{proposition}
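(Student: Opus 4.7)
The plan is a direct computation: derive the transport-diffusion equation for $V_n$ by differentiating its definition in time and substituting using the continuity and momentum equations. Since the solution is assumed to be strong on $\mathbb{T} \times [0,T]$ with $\rho_n > 0$ and $\lambda_n(\rho_n) = \rho_n^2 p_n'(\rho_n) = \gamma_n \rho_n^{\gamma_n+1}$ smooth in $\rho_n$, all the manipulations below are justified almost everywhere.

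First I would rewrite the momentum equation in non-conservative form. Expanding $\partial_t(\rho_n u_n) + \partial_x(\rho_n u_n^2)$ and using \eqref{a1} collapses \eqref{a2} to
\begin{equation*}
\rho_n (\partial_t u_n + u_n \partial_x u_n) = \partial_x V_n,
\end{equation*}
so that $\partial_t u_n + u_n \partial_x u_n = \rho_n^{-1} \partial_x V_n$. Differentiating this identity in $x$ yields
\begin{equation*}
\partial_t \partial_x u_n + (\partial_x u_n)^2 + u_n \partial_x^2 u_n = -\rho_n^{-2}\partial_x\rho_n\, \partial_x V_n + \rho_n^{-1}\partial_x^2 V_n,
\end{equation*}
which will supply the expression for $\partial_t \partial_x u_n$ needed below.

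Next I would compute $\partial_t V_n$ by the chain rule,
\begin{equation*}
\partial_t V_n = \lambda_n'(\rho_n)\,\partial_t\rho_n\,\partial_x u_n + \lambda_n(\rho_n)\,\partial_t\partial_x u_n,
\end{equation*}
substitute $\partial_t\rho_n = -\rho_n\partial_x u_n - u_n\partial_x\rho_n$ from \eqref{a1} into the first term, and substitute the expression for $\partial_t\partial_x u_n$ derived above into the second. The piece $\lambda_n(\rho_n) u_n \partial_x^2 u_n$ is rewritten as $u_n(\partial_x V_n - \lambda_n'(\rho_n)\partial_x\rho_n\,\partial_x u_n)$ using the identity $\partial_x V_n = \lambda_n'(\rho_n)\partial_x\rho_n\,\partial_x u_n + \lambda_n(\rho_n)\partial_x^2 u_n$. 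After this substitution the two terms proportional to $\lambda_n'(\rho_n) u_n \partial_x\rho_n\,\partial_x u_n$ cancel, and what remains is
\begin{equation*}
\partial_t V_n + u_n \partial_x V_n + \frac{\lambda_n(\rho_n)}{\rho_n^2}\partial_x\rho_n\,\partial_x V_n - \frac{\lambda_n(\rho_n)}{\rho_n}\partial_x^2 V_n = -\bigl(\lambda_n'(\rho_n)\rho_n + \lambda_n(\rho_n)\bigr)(\partial_x u_n)^2.
\end{equation*}
Finally, using $V_n^2 = (\lambda_n(\rho_n))^2 (\partial_x u_n)^2$ to replace $(\partial_x u_n)^2$ by $V_n^2/(\lambda_n(\rho_n))^2$ in the right-hand side yields \eqref{Veqn}.

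There is no real obstacle beyond careful bookkeeping of the algebraic cancellations; the only thing to watch is that the identity $\partial_x V_n = \lambda_n'(\rho_n)\partial_x\rho_n\,\partial_x u_n + \lambda_n(\rho_n)\partial_x^2 u_n$ is used consistently to eliminate $\partial_x^2 u_n$ in favour of $\partial_x V_n$, and that the terms in $\lambda_n'(\rho_n) u_n \partial_x\rho_n\,\partial_x u_n$ coming from $\partial_t\rho_n$ and from rewriting $\lambda_n(\rho_n) u_n \partial_x^2 u_n$ have opposite signs and cancel exactly. The $H^2$ regularity of $(\rho_n, u_n)$ together with $\rho_n > 0$ is more than enough to make each step pointwise a.e.
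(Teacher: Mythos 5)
Your computation is correct: the non-conservative form $\rho_n(\partial_t u_n + u_n\partial_x u_n)=\partial_x V_n$, the differentiation in $x$, the substitution of $\partial_t\rho_n$ from the continuity equation, and the cancellation of the two $\lambda_n'(\rho_n)u_n\partial_x\rho_n\,\partial_x u_n$ terms all check out and yield \eqref{Veqn} exactly. The paper itself omits the argument and defers to Lemma 3.7 of \cite{HCL}, which is the same direct computation you carried out, so your proposal matches the intended proof and has the merit of being self-contained.
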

\begin{proof}
    For the sake of brevity we refer the reader to Lemma 3.7 of \cite{HCL} for a complete proof. 
\end{proof}
We now prove a first regularity estimate for $V_{n}$. In what follows $\epsilon_{i} > 0$ will be used to denote the constants arising from an application of Young's inequality where the index $i$ is used to distinguish between the different applications of Young's inequality. We also recall the Sobolev inequality
\begin{equation} \label{Sinf}
     \|u\|_{L^{\infty}(\mathbb{T})} \le \|u\|^{\frac{1}{2}}_{L^{2}(\mathbb{T})}\|\partial_{x}u\|^{\frac{1}{2}}_{L^{2}(\mathbb{T})} + \|u\|_{L^{2}(\mathbb{T})}, ~\forall ~u \in  H^{1}(\mathbb{T}). 
\end{equation} Under the assumptions of Proposition \ref{eqnV}, we have the following result.
\begin{proposition} \label{Vest1}
    $V_{n}$ satisfies \begin{align} \notag
        \|V_{n}\|^{2}_{L^{\infty}_{t}L^{2}_{x}}  &+ C_{3}   \left\|\partial_{x}V_{n}\right\|^{2}_{L^{2}_{t,x}} \\[1ex] &\le T\|V_{n}(0)\|_{L^{2}_{x}}^{2} \exp \left( 2C_{1}T + 2C_{2}\|V_{n}\|^{2}_{L^{2}_{t,x}} \right) \left(C_{1}T +C_{2}\|V_{n}(0)\|_{L^{2}_{x}}^{2} \right) =: \mathcal{V}_{1} \label{Vest1eqn}
    \end{align} and \begin{equation}
        \|V_{n}\|_{L^{2}_{t,x}} \le (\gamma_{n}(\overline{\rho_{n}})^{\gamma_{n}+1}E_{n})^{\frac{1}{2}},
    \end{equation}
    where $C_{3}$ is a positive constant depending on $n$ and
        \begin{align*}
    C_{1} \equiv C_{1}(\gamma_{n}, \underline{\rho_{n}}, \|R\|_{L^{\infty}_{t} L^{2}_{x}}, E_{n}), ~C_{2} \equiv C_{2}(\gamma_{n}, \underline{\rho_{n}}), ~C_{3} \equiv C(\gamma_{n}, \underline{\rho_{n}}).
\end{align*}
    \begin{proof}
       Multiplying \eqref{Veqn} by $V_{n}$ and integrating by parts leads to
        \begin{align*}
             \frac{1}{2}\frac{d}{dt} \int_{\mathbb{T}} |V_{n}|^{2}~dx + \int_{\mathbb{T}} \frac{\lambda_{n}(\rho_{n})}{\rho_{n}} (\partial_{x}V_{n})^{2}~dx = &- \int_{\mathbb{T}} V_{n} \partial_{x}V_{n} \frac{\partial_{x}\lambda_{n}(\rho_{n})}{\rho_{n}}~dx - \int_{\mathbb{T}} u_{n}V_{n}\partial_{x}V_{n}~dx \\[1ex] &- \int_{\mathbb{T}} \frac{\lambda_{n}'(\rho_{n})\rho_{n} + \lambda_{n}(\rho_{n})}{(\lambda_{n}(\rho_{n}))^{2}}V_{n}^{3}~dx =: \sum_{i=1}^{3}I_{1}.
        \end{align*} Defining $ \displaystyle R := \rho_{n}^{-1}\partial_{x}\lambda_{n}(\rho_{n})$, a direct computation reveals that $ R = \gamma_{n}(\gamma_{n}+1)\partial_{x}p_{n}(\rho_{n}),$ and so by \eqref{ub1}
        \begin{align*}
            \|R\|_{L^{\infty}_{t}L^{2}_{x}} \le \frac{\gamma_{n}(\gamma_{n}+1)}{\sqrt{\underline{\rho_{n}}}}(T+3)E_{n},
        \end{align*} where $E_{n} := E_{n}^{0} + E_{n}^{1} + E_{n}^{2}$. We now estimate $I_{1} - I_{3}$. Using the Holder and Sobolev inequalities,
        \begin{align*}
            I_{1} \le ~\|R\|_{L^{2}_{x}} \|\partial_{x}V_{n}\|_{L^{2}_{x}} \|V_{n}\|_{L^{\infty}_{x}} \le \|\partial_{x}V_{n}\|_{L^{2}_{x}} \left( \|V_{n}\|_{L^{2}_{x}}^{\frac{1}{2}}\|\partial_{x}V_{n}\|_{L^{2}_{x}}^{\frac{1}{2}} + \|V_{n}\|_{L^{2}_{x}} \right)\|R\|_{L^{2}_{x}} .
        \end{align*}
        Applying Young's inequality twice then gives us
         \begin{align*}
            I_{1} &\le \epsilon_{1}\|\partial_{x}V_{n}\|_{L^{2}_{x}}^{2} + \frac{1}{4\epsilon_{1}}\|R\|_{L^{2}_{x}}^{2} \left( \|V_{n}\|_{L^{2}_{x}}^{\frac{1}{2}}\|\partial_{x}V_{n}\|_{L^{2}_{x}}^{\frac{1}{2}} + \|V_{n}\|_{L^{2}_{x}}\right)^{2} \\ &\le (\epsilon_{1}+\epsilon_{2}) \|\partial_{x}V_{n}\|_{L^{2}_{x}}^{2} + \frac{1}{16\epsilon_{1}^{2}\epsilon_{2}}\|R\|_{L^{2}_{x}}^{4}\|V_{n}\|_{L^{2}_{x}}^{2} + \frac{1}{2\epsilon_{1}}\|V_{n}\|_{L^{2}_{x}}^{2}\|R\|_{L^{2}_{x}}^{2}.
        \end{align*} Next,
        \begin{align*}
            I_{2} \le \left| \int_{\mathbb{T}} u_{n}V_{n}\partial_{x}V_{n} \right| \le \|u_{n}\|_{L^{\infty}_{x}}\|V_{n}\|_{L^{2}_{x}}\|\partial_{x}V_{n}\|_{L^{2}_{x}} \le \epsilon_{3} \|\partial_{x}V_{n}\|_{L^{2}_{x}}^{2} + \frac{1}{4\epsilon_{3}} \|u_{n}\|^{2}_{L^{\infty}_{x}}\|V_{n}\|^{2}_{L^{2}_{x}}.
        \end{align*} Due to the Sobolev inequality \eqref{Sinf},
        \begin{align} \notag 
             \|u_{n}\|_{L^{\infty}_{x}}^{2} &\le \|u_{n}\|_{L^{2}_{x}}\|\partial_{x}u_{n}\|_{L^{2}_{x}} + \|u_{n}\|_{L^{2}_{x}}^{2} \le \frac{3}{2}\|u_{n}\|_{L^{2}_{x}}^{2} + \frac{1}{2}\|\partial_{x}u_{n}\|_{L^{2}_{x}}^{2} \\[1ex]
             &\le \frac{3}{2 \underline{\rho_{n}}}\|\sqrt{\rho_{n}}u_{n}\|_{L^{2}_{x}}^{2} + \frac{1}{2}\left\|\frac{1}{\lambda_{n}(\rho_{n})}\right\|^{2}_{L^{\infty}_{t,x}}\|V_{n}\|_{L^{2}_{x}}^{2} \\ &\le   \frac{3}{2 \underline{\rho_{n}}}\|\sqrt{\rho_{n}}u_{n}\|_{L^{2}_{x}}^{2} + \gamma_{n}^{-2} \underline{\rho_{n}}^{-2-2\gamma_{n}}\|V_{n}\|_{L^{2}_{x}}^{2}. 
        \end{align} Therefore,
    \begin{align*}
        I_{2} \le \epsilon_{3} \|\partial_{x}V_{n}\|_{L^{2}_{x}}^{2} + (\epsilon_{3}\underline{\rho_{n}})^{-1}E_{n}\|V_{n}\|_{L^{2}_{x}}^{2} + (4\gamma_{n}^{-1}\epsilon_{3}\underline{\rho_{n}}^{-1-\gamma_{n}})^{-1}\|V_{n}\|_{L^{2}_{x}}^{4}.
    \end{align*}
Moving onto $I_{3}$, we first remark that
\begin{align*}
    \frac{\lambda_{n}'(\rho_{n})\rho_{n}+ \lambda_{n}(\rho_{n})}{(\lambda_{n}(\rho_{n}))^{2}} = \frac{\gamma_{n}+2}{\gamma_{n}\rho_{n}^{\gamma_{n}+1}}.
\end{align*} Then using the Holder, Sobolev \eqref{Sinf} and Young's inequalities,
\begin{align*}
    I_{3} &\le \frac{\gamma_{n}+2}{\gamma_{n}\underline{\rho_{n}}^{\gamma_{n}+1}} \|V_{n}\|_{L^{\infty}_{x}}\|V_{n}\|_{L^{2}_{x}}^{2} \le \frac{\gamma_{n}+2}{\gamma_{n}\underline{\rho_{n}}^{\gamma_{n}+1}} \left( \|V_{n}\|_{L^{2}_{x}}^{\frac{5}{2}}\|\partial_{x}V_{n}\|_{L^{2}_{x}}^{\frac{1}{2}} + \|V_{n}\|_{L^{2}_{x}}^{3}  \right) \\ &\le \frac{\gamma_{n}+2}{\gamma_{n}\underline{\rho_{n}}^{\gamma_{n}+1}} \left( \epsilon_{4}^{-\frac{4}{3}}\|V_{n}\|_{L^{2}_{x}}^{\frac{10}{3}} + \epsilon_{4}^{4}\|\partial_{x}V_{n}\|_{L^{2}_{x}}^{2} + \|V_{n}\|_{L^{2}_{x}}^{3} \right). 
\end{align*} By two further applications of Young's inequality,
\begin{align*}
    I_{3} &\le \frac{\gamma_{n}+2}{\gamma_{n}\underline{\rho_{n}}^{\gamma_{n}+1}} \left(\epsilon_{4}^{-\frac{4}{3}} \|V_{n}\|_{L^{2}_{x}}^{2} + \epsilon_{4}^{4}\|\partial_{x}V_{n}\|_{L^{2}_{x}}^{2} + (\epsilon_{4})^{-\frac{16}{3}}\|V_{n}\|_{L^{2}_{x}}^{4} \right),
\end{align*} where we have used the following observation due to Young's inequality:
\begin{align*}
    &\|V_{n}\|^{\frac{10}{3}} = \|V_{n}\|^{2}\|V_{n}\|^{\frac{4}{3}} \le \frac{2}{3}\|V_{n}\|^{3} + \frac{1}{3}\|V_{n}\|^{4} \le \frac{1}{3}\|V_{n}\|^{2} + \frac{2}{3}\|V_{n}\|^{4}.
\end{align*} Assembling our estimates for $I_{1}- I_{3}$, we have
\begin{align*}
    \frac{1}{2}\frac{d}{dt}\|V_{n}\|_{L^{2}_{x}}^{2} + \int_{\mathbb{T}} \frac{\lambda_{n}(\rho_{n})}{\rho_{n}} (\partial_{x}V_{n})^{2}~dx &\le (\epsilon_{1} + \epsilon_{2} + \epsilon_{3} + \epsilon_{4}^{4} )\|\partial_{x}V_{n}\|_{L^{2}_{x}}^{2} + \frac{1}{16\epsilon_{1}^{2}\epsilon_{2}}\|R\|_{L^{2}_{x}}^{4}\|V_{n}\|_{L^{2}_{x}}^{2} + \frac{1}{2\epsilon_{1}}\|V_{n}\|_{L^{2}_{x}}^{2}\|R\|_{L^{2}_{x}}^{2} \\[1ex] &+ (\epsilon_{3}\underline{\rho_{n}})^{-1}E_{n}\|V_{n}\|_{L^{2}_{x}}^{2} + (4\gamma_{n}^{2}\epsilon_{3}\underline{\rho_{n}}^{-1-\gamma_{n}})^{-1}\|V_{n}\|_{L^{2}_{x}}^{4} \\[1ex] &+ \frac{\gamma_{n}+2}{\gamma_{n}\underline{\rho_{n}}^{\gamma_{n}+1}} \left( \|V_{n}\|_{L^{2}_{x}}^{2} + \epsilon_{4}^{4}\|\partial_{x}V_{n}\|_{L^{2}_{x}}^{2} + (\epsilon_{4})^{-\frac{16}{3}}\|V_{n}\|_{L^{2}_{x}}^{4} \right).
\end{align*}
Observing that $\rho_{n}^{-1} \lambda_{n}(\rho_{n}) = \gamma_{n}\rho_{n}^{\gamma_{n}} \ge \gamma_{n} \underline{\rho_{n}}^{\gamma_{n}}$ and simplifying the RHS of the above inequality, we have \begin{align*}
    \frac{1}{2}\frac{d}{dt}\|V_{n}\|_{L^{2}_{x}}^{2} &+ \left( \gamma_{n}\underline{\rho_{n}}^{\gamma_{n}} - \epsilon_{1} - \epsilon_{2} - \epsilon_{3} - \epsilon_{4}^{4} \right) \left\|\partial_{x}V_{n}\right\|_{L^{2}_{x}}^{2} \\[1ex] &\le \|V_{n}\|_{L^{2}_{x}}^{2} \left\{ \frac{\|R\|_{L^{\infty}_{t}L^{2}_{x}}^{4}}{16\epsilon_{1}^{2}\epsilon_{2}} + \frac{\|R\|_{L^{\infty}_{t}L^{2}_{x}}^{2}}{2\epsilon_{1}} + \frac{E_{n}}{\epsilon_{3}\underline{\rho_{n}}} + \frac{\gamma_{n}+2}{\gamma_{n}\underline{\rho_{n}}^{\gamma_{n}+1}} + \left[ \frac{(\gamma_{n}+2)\epsilon_{4}^{-\frac{16}{3}}}{\gamma_{n} \underline{\rho_{n}}^{\gamma_{n}+1}} + \frac{E_{n}}{4\gamma_{n}\epsilon_{3} \underline{\rho_{n}}^{\gamma_{n}-1}} \right]\|V_{n}\|_{L^{2}_{x}}^{2}            \right\}.
\end{align*}
Choosing $\epsilon_{i} > 0$ small enough (possibly dependent on $n$), we have
\begin{align}
\begin{aligned}
     \frac{1}{2}\frac{d}{dt}\|V_{n}\|_{L^{2}_{x}}^{2} &+ C_{3}   \left\|\partial_{x}V_{n}\right\|_{L^{2}_{x}}^{2} \le \|V_{n}\|_{L^{2}_{x}}^{2} \left\{ C_{1} + C_{2}\|V_{n}\|_{L^{2}_{x}}^{2}            \right\},   \label{3.3a}
     \end{aligned}
\end{align}where 
\begin{align*}
    &C_{1} := \frac{\|R\|_{L^{\infty}_{t}L^{2}_{x}}^{4}}{16\epsilon_{1}^{2}\epsilon_{2}} + \frac{\|R\|_{L^{\infty}_{t}L^{2}_{x}}^{2}}{2\epsilon_{1}} + \frac{E_{n}}{\epsilon_{3}\underline{\rho_{n}}} + \frac{\gamma_{n}+2}{\gamma_{n}\underline{\rho_{n}}^{\gamma_{n}+1}}, ~
    C_{2} := \frac{(\gamma_{n}+2)\epsilon_{4}^{-\frac{16}{3}}}{\gamma_{n} \underline{\rho_{n}}^{\gamma_{n}+1}} + \frac{E_{n}}{4\gamma_{n}\epsilon_{3} \underline{\rho_{n}}^{\gamma_{n}-1}}, ~C_{3} := \frac{1}{5}\gamma_{n}\underline{\rho_{n}}^{\gamma_{n}}.
\end{align*}
An application of Gronwall's inequality yields
\begin{align*}
    \frac{1}{2}\|V_{n}(t)\|^{2}_{L^{\infty}_{t}L^{2}_{x}} \le \|V_{n}(0)\|_{L^{2}_{x}}^{2} \exp \left( C_{1}T + C_{2}\|V_{n}\|^{2}_{L^{2}_{t,x}} \right).
\end{align*} 
Exploiting this inequality in \eqref{3.3a} leads to
\begin{align*}
     \frac{1}{2}\frac{d}{dt}\|V_{n}\|_{L^{2}_{x}}^{2} &+ C_{3}   \left\|\partial_{x}V_{n}\right\|_{L^{2}_{x}}^{2} \\[1ex] &\le \|V_{n}(0)\|_{L^{2}_{x}}^{2} \exp \left( C_{1}T + C_{2}\|V_{n}\|^{2}_{L^{2}_{t,x}} \right) \left\{C_{1}T +C_{2}\|V_{n}(0)\|_{L^{2}_{x}}^{2} \exp \left( C_{1}T + C_{2}\|V_{n}\|^{2}_{L^{2}_{t,x}} \right)\right\}.
\end{align*} Integrating in time and simplifying leads to \eqref{Vest1eqn}.
To finish, we notice that due to \eqref{E2},  \begin{equation*}
   \|V_{n}\|_{L^{2}_{t,x}}^{2} =  \|\lambda_{n}(\rho_{n})^{2}(\partial_{x}u_{n})^{2}\|_{L^{1}_{t,x}} \le \gamma_{n}\overline{\rho_{n}}^{\gamma_{n}+1}\|\lambda_{n}(\rho_{n})(\partial_{x}u_{n})^{2}\|_{L^{1}_{t,x}} \le \gamma_{n}\overline{\rho_{n}}^{\gamma_{n}+1}E_{n}.
\end{equation*}
 \end{proof}
\end{proposition}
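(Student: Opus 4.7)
The proof is a standard energy estimate on the evolution equation \eqref{Veqn} for $V_n$, followed by Gr\"onwall. The first step is to multiply \eqref{Veqn} by $V_n$ and integrate over $\mathbb{T}$. After integration by parts (using periodicity) the diffusion produces the coercive term $\int_{\mathbb{T}} \rho_n^{-1}\lambda_n(\rho_n)(\partial_x V_n)^2\,dx$ on the left, together with a boundary-free commutator term $-\int V_n \partial_x V_n\, R\,dx$ on the right (where $R := \rho_n^{-1}\partial_x\lambda_n(\rho_n)$), the convection contributes $-\int u_n V_n \partial_x V_n\,dx$, and the reaction term on the right of \eqref{Veqn} becomes $-\int \frac{\lambda_n'(\rho_n)\rho_n+\lambda_n(\rho_n)}{\lambda_n(\rho_n)^2}V_n^3\,dx$. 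These are the three terms $I_1,I_2,I_3$ in the statement.

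Next, I would record the two lower bounds $\rho_n^{-1}\lambda_n(\rho_n)=\gamma_n\rho_n^{\gamma_n}\ge \gamma_n\underline{\rho_n}^{\gamma_n}$ and the identity $R=\gamma_n(\gamma_n+1)\partial_x p_n(\rho_n)$, the latter combined with the additional-energy bound \eqref{ub1} and the positivity of $\underline{\rho_n}$ to get $\|R\|_{L^\infty_t L^2_x}\lesssim \gamma_n(\gamma_n+1)\underline{\rho_n}^{-1/2}(T+3)E_n$. I then estimate $I_1$ via H\"older and the 1D Sobolev inequality \eqref{Sinf} applied to $\|V_n\|_{L^\infty_x}\le \|V_n\|_{L^2_x}^{1/2}\|\partial_x V_n\|_{L^2_x}^{1/2}+\|V_n\|_{L^2_x}$, followed by two applications of Young's inequality to split off a small multiple of $\|\partial_x V_n\|_{L^2_x}^2$. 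For $I_2$ I bound $\|u_n\|_{L^\infty_x}^2$ by $\|u_n\|_{L^2_x}\|\partial_x u_n\|_{L^2_x}+\|u_n\|_{L^2_x}^2$, then use the lower bound on $\rho_n$ together with $\|\sqrt{\rho_n}u_n\|_{L^2_x}^2\le E_n$ (Lemma \ref{basicenergy}) and the relation $\partial_x u_n = V_n/\lambda_n(\rho_n)$ with $\lambda_n(\rho_n)\ge \gamma_n\underline{\rho_n}^{\gamma_n+1}$ to bound $\|\partial_x u_n\|_{L^2_x}^2$ by a multiple of $\|V_n\|_{L^2_x}^2$. For $I_3$ I compute explicitly $\frac{\lambda_n'\rho_n+\lambda_n}{\lambda_n^2}=\frac{\gamma_n+2}{\gamma_n\rho_n^{\gamma_n+1}}$ and estimate $\int V_n^3$ by $\|V_n\|_{L^\infty_x}\|V_n\|_{L^2_x}^2$; \eqref{Sinf} then produces terms of the shape $\|V_n\|_{L^2_x}^{5/2}\|\partial_x V_n\|_{L^2_x}^{1/2}$ and $\|V_n\|_{L^2_x}^3$, and two more Young inequalities split them into a small piece of $\|\partial_x V_n\|_{L^2_x}^2$, a quartic piece $\|V_n\|_{L^2_x}^4$, and lower-order $\|V_n\|_{L^2_x}^2$ (via the elementary estimate $\|V_n\|^{10/3}\lesssim \|V_n\|^2+\|V_n\|^4$).

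Assembling these bounds, I choose the Young parameters $\epsilon_1,\epsilon_2,\epsilon_3,\epsilon_4>0$ sufficiently small (depending on $n$ through $\gamma_n\underline{\rho_n}^{\gamma_n}$) so that the $\|\partial_x V_n\|_{L^2_x}^2$ terms on the right absorb into a fraction (say $1/5$) of the coercive term on the left, yielding the differential inequality
\begin{equation*}
\tfrac{1}{2}\tfrac{d}{dt}\|V_n\|_{L^2_x}^2 + \tfrac{1}{5}\gamma_n\underline{\rho_n}^{\gamma_n}\|\partial_x V_n\|_{L^2_x}^2 \le \|V_n\|_{L^2_x}^2\bigl(C_1 + C_2\|V_n\|_{L^2_x}^2\bigr),
\end{equation*}
with $C_1,C_2$ as in the statement. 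Dropping the dissipative term and applying Gr\"onwall on $\|V_n\|_{L^2_x}^2$ gives $\|V_n\|_{L^\infty_t L^2_x}^2\le 2\|V_n(0)\|_{L^2_x}^2\exp(C_1 T + C_2\|V_n\|_{L^2_{t,x}}^2)$; feeding this back into the differential inequality and integrating in time yields \eqref{Vest1eqn} with a constant $C_3 = \tfrac{2}{5}\gamma_n\underline{\rho_n}^{\gamma_n}$. Finally, the $L^2_{t,x}$ bound on $V_n$ follows directly from the basic energy: writing $V_n^2 = \lambda_n(\rho_n)^2(\partial_x u_n)^2 \le \gamma_n\overline{\rho_n}^{\gamma_n+1}\cdot\lambda_n(\rho_n)(\partial_x u_n)^2$ and invoking \eqref{E2} together with Lemma \ref{upperboundrho}.

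\textbf{Main difficulty.} The only delicate step is the treatment of the cubic term $I_3$: the natural Sobolev bound produces a half-power of $\|\partial_x V_n\|_{L^2_x}$ multiplied by $\|V_n\|_{L^2_x}^{5/2}$, and one must split this carefully with Young so that the resulting nonlinearity in $\|V_n\|_{L^2_x}^2$ is at most quadratic times $\|V_n\|_{L^2_x}^2$ (hence Gr\"onwall-closable), rather than a super-quadratic blow-up nonlinearity. Once that interpolation is done, the remainder is bookkeeping of the $n$-dependent constants.
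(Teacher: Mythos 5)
Your proposal reproduces the paper's argument essentially verbatim: the same multiplication of \eqref{Veqn} by $V_n$, the same decomposition into $I_1,I_2,I_3$, the same bound on $R=\rho_n^{-1}\partial_x\lambda_n(\rho_n)$ via \eqref{ub1}, the same Sobolev--Young splittings (including the interpolation $\|V_n\|^{10/3}\lesssim\|V_n\|^2+\|V_n\|^4$ for the cubic term), the same absorption into $\tfrac15\gamma_n\underline{\rho_n}^{\gamma_n}\|\partial_xV_n\|^2$, the same Gr\"onwall-plus-feedback step, and the same derivation of the $L^2_{t,x}$ bound from the basic energy. It is correct and takes the same route as the paper.
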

The previous result allows us to deduce some regularity estimates on $u_{n}$ and $\rho_{n}$. In the remainder of this section we denote by $M_{k}^{n}$ an arbitrary function which satisfies
\begin{equation*}
    M_{k}^{n} \equiv M_{k}^{n}(t, \|\rho_{n}\|_{L^{\infty}_{t}H^{k}_{x}}, \|u_{n}\|_{L^{\infty}_{t}H^{k}_{x}}, \|u_{n}\|_{L^{2}_{t}H^{k+1}_{x}}, \gamma_{n}, E_{n},  \overline{\rho_{n}}, (\underline{\rho_{n}})^{-1}), ~~ \sup_{t \in [0,T^{*})} M_{k}^{n} < + \infty.
\end{equation*}
\begin{corollary} \label{Vest2}
    We have that
    \begin{enumerate}
        \item $\partial_{x}u_{n}$ is bounded in $L^{2}_{t}L^{2}_{x}$ with \begin{equation}
            \|\partial_{x}u_{n}\|^{2}_{L^{2}_{t}L^{2}_{x}} \le \frac{E_{n}}{\gamma_{n}\underline{\rho_{n}}^{\gamma_{n}+1}}.    \end{equation} 
        \item $\partial_{x}u_{n}$ is bounded in $L^{\infty}_{t}L^{2}_{x}$ with \begin{align}
             \|\partial_{x}u_{n}\|_{L^{\infty}_{t}L^{2}_{x}} \le \frac{\mathcal{V}_{1}}{\gamma_{n}\underline{\rho_{n}}^{\gamma_{n}+1}} \le M_{0}^{n} .   \end{align}
         \item $\partial_{x} \rho_{n}$ is bounded in $L^{\infty}_{t}L^{2}_{x}$ with \begin{align}
            \|\partial_{x}\rho_{n}\|_{L^{\infty}_{t}L^{2}_{x}} \le \gamma_{n}^{-1}\underline{\rho_{n}}^{\frac{1}{2}-\gamma_{n}}E_{n} \le M_{0}^{n}.
        \end{align}
        \item $\partial_{x}^{2}u_{n}$ is bounded in $L^{2}_{t}L^{2}_{x}$ with \begin{align}\begin{aligned}
            \|\partial_{x}^{2}u_{n}\|_{L^{2}_{t}L^{2}_{x}} &\le \frac{1}{\gamma_{n} \underline{\rho_{n}}^{\gamma_{n}+1}}\left( \|\partial_{x}V_{n}\|_{L^{2}_{t}L^{2}_{x}} + \|\partial_{x}\lambda_{n}(\rho_{n})\|_{L^{\infty}_{t}L^{2}_{x}}\|\partial_{x}u_{n}\|_{L^{2}_{t}L^{2}_{x}}^{\frac{1}{2}} \left(\|\partial_{x}^{2}u_{n}\|_{L^{2}_{t}L^{2}_{x}}^{\frac{1}{2}} + \|\partial_{x}V_{n}\|_{L^{2}_{t}L^{2}_{x}} \right) \right) \\[1ex] &\le M_{0}^{n}
        \end{aligned}
        \end{align}
    \end{enumerate}
\end{corollary}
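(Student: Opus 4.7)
The strategy is to reduce each of the four estimates to a bound on $V_n$ or $\partial_x V_n$ via the defining identity $V_n = \lambda_n(\rho_n)\partial_x u_n$, the pointwise lower bound $\lambda_n(\rho_n) \ge \gamma_n \underline{\rho_n}^{\gamma_n+1}$ (valid under the standing positivity assumption), and the results already established: the basic energy \eqref{E2}, the additional energy \eqref{E3}, the pressure bound \eqref{ub1} from Lemma~\ref{upperboundrho}, and the control $\mathcal{V}_1$ on $V_n$ from Proposition~\ref{Vest1}.

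Estimates (1)--(3) are essentially immediate. For (1), I would start from \eqref{E2}, which controls $\sqrt{\lambda_n(\rho_n)}\,\partial_x u_n$ in $L^2_t L^2_x$, and extract the pointwise factor $\sqrt{\gamma_n \underline{\rho_n}^{\gamma_n+1}}$ to isolate $\partial_x u_n$. Estimate (2) follows from the identity $\partial_x u_n = V_n/\lambda_n(\rho_n)$ combined with the $L^\infty_t L^2_x$ bound on $V_n$ given by \eqref{Vest1eqn}. For (3), I would use $\partial_x p_n(\rho_n) = \gamma_n \rho_n^{\gamma_n-1}\partial_x \rho_n$ to rewrite $\partial_x \rho_n = \gamma_n^{-1}\rho_n^{1-\gamma_n}\partial_x p_n(\rho_n)$, and then invoke the bound $\|\sqrt{\rho_n}\,\partial_x p_n(\rho_n)\|_{L^\infty_t L^2_x}^{2} \le (T+3)E_n$ from \eqref{ub1}, paying a factor of $\underline{\rho_n}^{-1/2}$ to peel off the $\sqrt{\rho_n}$ weight.

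The main obstacle is estimate (4). My plan is to differentiate the defining relation to get
\[
\lambda_n(\rho_n)\,\partial_x^2 u_n = \partial_x V_n - \partial_x \lambda_n(\rho_n)\,\partial_x u_n,
\]
divide by $\lambda_n(\rho_n)$, and then take the $L^2_t L^2_x$ norm while exploiting the lower bound on $\lambda_n(\rho_n)$. The delicate term is $\partial_x \lambda_n(\rho_n)\,\partial_x u_n$, which I would estimate by placing $\partial_x \lambda_n(\rho_n)$ in $L^\infty_t L^2_x$ (this is available since $R := \rho_n^{-1}\partial_x \lambda_n(\rho_n)$ was already bounded during the proof of Proposition~\ref{Vest1} and $\rho_n$ is uniformly bounded above) and $\partial_x u_n$ in $L^2_t L^\infty_x$. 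The latter is handled via the Sobolev inequality \eqref{Sinf}, which produces $\|\partial_x u_n\|_{L^2_x}^{1/2}\|\partial_x^2 u_n\|_{L^2_x}^{1/2} + \|\partial_x u_n\|_{L^2_x}$. Rewriting the residual $L^2_x$ norm via $\|\partial_x u_n\|_{L^2_x} \le \|V_n\|_{L^2_x}/(\gamma_n \underline{\rho_n}^{\gamma_n+1})$ and noting that $\|V_n\|_{L^\infty_t L^2_x}$ is in turn bounded by $\|\partial_x V_n\|_{L^2_t L^2_x}$ (after a time integration) accounts for the extra $\|\partial_x V_n\|_{L^2_t L^2_x}$ summand visible in the displayed bound. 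The resulting inequality is implicit in $\|\partial_x^2 u_n\|_{L^2_t L^2_x}$; absorbing the half-power on the right by Young's inequality and combining with (1)--(3) and the finiteness of $\mathcal{V}_1$ produces the $M_0$ bound stated.
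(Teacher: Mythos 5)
Your proposal is correct and follows essentially the same route as the paper: (1) extract $\partial_x u_n$ from the basic energy using $\lambda_n(\rho_n)\ge\gamma_n\underline{\rho_n}^{\gamma_n+1}$, (2) invert $V_n=\lambda_n(\rho_n)\partial_x u_n$ against the $L^\infty_tL^2_x$ bound $\mathcal{V}_1$, (3) trade $\partial_x\rho_n$ for $\sqrt{\rho_n}\,\partial_x p_n(\rho_n)$ controlled by the energy estimates, and (4) differentiate the defining relation, put $\partial_x\lambda_n(\rho_n)\partial_x u_n$ in $L^\infty_tL^2_x\times L^2_tL^\infty_x$, apply the Sobolev inequality \eqref{Sinf}, and absorb the resulting half-power of $\|\partial_x^2u_n\|_{L^2_{t,x}}$ by Young's inequality (which is what the paper's closing ``Gronwall argument'' amounts to). The only differences are cosmetic constants (e.g.\ $(T+3)E_n$ versus $E_n$ in part (3)).
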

\begin{proof}
    The first bound follows from \eqref{E2}. For the second bound, we use the relationship $V_{n} = \lambda_{n}(\rho_{n})\partial_{x}u_{n}$ to estimate
    \begin{align*}
        \|\partial_{x}u_{n}\|_{L^{\infty}_{t}L^{2}_{x}} \le \left\| \frac{1}{\lambda_{n}(\rho_{n})} \right\|_{L^{\infty}_{t,x}}\|V_{n}\|_{L^{\infty}_{t}L^{2}_{x}} \le \frac{1}{\gamma_{n} \underline{\rho_{n}}^{\gamma_{n}+1}}\|V_{n}\|_{L^{\infty}_{t} L^{2}_{x}} \le \frac{\mathcal{V}_{1}}{\gamma_{n} \underline{\rho_{n}}^{\gamma_{n}+1}}.
    \end{align*} 
    For the third estimate, we first note that from \eqref{E2} and \eqref{E3}, we have \begin{align} \begin{aligned}
        \|\sqrt{\rho_{n}} \partial_{x}p_{n}(\rho_{n})\|_{L^{\infty}_{t}L^{2}_{x}} &= \|\sqrt{\rho_{n}}(w_{n}-u_{n}) \|_{L^{\infty}_{t}L^{2}_{x}} \le  \|\sqrt{\rho_{n}} w_{n}\|_{L^{\infty}_{t}L^{2}_{x}} + \|\sqrt{\rho_{n}}u_{n}\|_{L^{\infty}_{t}L^{2}_{x}} \\[1ex] &\le \|\sqrt{\rho_{n}^{0}}u_{n}^{0}\|^{2}_{L^{2}_{x}} + \|\sqrt{\rho_{n}^{0}}w_{n}^{0}\|^{2}_{L^{2}_{x}} \le E_{n}. \label{3.3b}\end{aligned}
    \end{align}
    On the other hand, \begin{equation}
    \begin{aligned}
          \|\sqrt{\rho_{n}} \partial_{x}p_{n}(\rho_{n})\|_{L^{\infty}_{t}L^{2}_{x}} = \| \gamma_{n} \rho_{n}^{\gamma_{n}-\frac{1}{2}}\partial_{x} \rho_{n}\|_{L^{\infty}_{t}L^{2}_{x}} \ge \gamma_{n} \underline{\rho_{n}}^{\gamma_{n}-\frac{1}{2}}\|\partial_{x}\rho_{n}\|_{L^{\infty}_{t}L^{2}_{x}}. \label{3.3c}
        \end{aligned}
    \end{equation} The result follows directly from \eqref{3.3b} and \eqref{3.3c}. In order to deduce the fourth bound, we differentiate $V_{n} = \lambda_{n}(\rho_{n})\partial_{x}u_{n}$ to find
    \begin{equation*}
        \partial_{x}^{2}u_{n} = \frac{\partial_{x}V_{n} - \partial_{x}\lambda_{n}(\rho_{n})\partial_{x}u_{n}}{\lambda_{n}(\rho_{n})}.
    \end{equation*} Thus, we have
    \begin{align*}
        \|\partial_{x}^{2}u_{n}\|_{L^{2}_{t}L^{2}_{x}} \le \left\| \frac{1}{\lambda_{n}(\rho_{n})} \right\|_{L^{\infty}_{t,x}} \left( \|\partial_{x}V_{n}\|_{L^{2}_{t}L^{2}_{x}} + \|\partial_{x} \lambda_{n} \partial_{x}u_{n}\|_{L^{2}_{t}L^{2}_{x}} \right).
    \end{align*} Noticing that $\|\partial_{x} \lambda_{n} \partial_{x}u_{n}\|_{L^{2}_{t}L^{2}_{x}} \le \|\partial_{x}\lambda_{n}(\rho_{n})\|_{L^{\infty}_{t}L^{2}_{x}}\|\partial_{x}u_{n}\|_{L^{2}_{t}L^{\infty}_{x}}$ allows us to further deduce using the Sobolev inequality \eqref{Sinf} that
    \begin{align*}
         \|\partial_{x}^{2}u_{n}\|_{L^{2}_{t}L^{2}_{x}} \le \frac{1}{\gamma_{n} \underline{\rho_{n}}^{\gamma_{n}+1}} \left( \|\partial_{x}V_{n}\|_{L^{2}_{t}L^{2}_{x}} + \|\partial_{x}\lambda_{n}(\rho_{n})\|_{L^{\infty}_{t}L^{2}_{x}} (\|\partial_{x}u_{n}\|_{L^{2}_{t}L^{2}_{x}}^{\frac{1}{2}} + \|\partial_{x}^{2}u_{n}\|_{L^{2}_{t}L^{2}_{x}}^{\frac{1}{2}} + \|\partial_{x}u_{n}\|_{L^{2}_{t}L^{2}_{x}})    \right).
    \end{align*} A Gronwall argument can then be used to bound this quantity by $M_{0}^{n}$. 
\end{proof}
\begin{remark}
   It follows from the energy estimates \eqref{E1}, \eqref{E2} and \eqref{E3} and the upper/lower bounds on the density that $\|\rho_{n}\|_{L^{\infty}_{t}L^{2}_{x}} + \|u_{n}\|_{L^{\infty}_{t}L^{2}_{x}}  + \|u_{n}\|_{L^{2}_{t}H^{1}_{x}} \le C$, where $C$ is independent of time. Combining this with Corollary \ref{Vest2}, the estimates we have acquired up to this point can be summarised as:
    \begin{equation}
        \|\rho_{n}\|_{L^{\infty}_{t}H^{1}_{x}} + \|u_{n}\|_{L^{\infty}_{t}H^{1}_{x}}  + \|u_{n}\|_{L^{2}_{t}H^{2}_{x}} \le M_{0}^{n}. \label{buk1}
    \end{equation} 
\end{remark}
\subsubsection{Completing the base step of induction}
To complete the base step we need to show that \begin{equation} \label{bugoalk2}
\|\partial_{x}^{2}\rho_{n}\|_{L^{\infty}_{t}L^{2}_{x}} + \|\partial_{x}^{2}u_{n}\|_{L^{\infty}_{t}L^{2}_{x}} + \|\partial_{x}^{3}u_{n}\|_{L^{2}_{t,x}} < M_{1}^{n}. 
\end{equation}
We now make note of a result which will allow us to deduce the remaining higher-order regularity estimates. We assume $\rho_{n}, u_{n}$ are smooth for the sake of convenience.
\begin{proposition}(Higher order regularity formulae)
    Let $k \ge 2$, ~$\rho_{n} \in C^{1}(0,T; C^{k}(\mathbb{T})), ~u_{n} \in C(0,T; C^{k+1}(\mathbb{T}))$ satisfy \eqref{a1}. Then,
    \begin{align}
        \frac{1}{2}\frac{d}{dt} \|\partial_{x}^{k} \rho_{n}\|_{L^{2}_{x}}^{2} \le C\left( \|\partial_{x}^{k}u_{n}\|_{L^{2}_{x}} \|\partial_{x}^{k}\rho_{n}\|_{L^{2}_{x}}^{2} + \|\rho_{n}\|_{L^{\infty}_{x}} \|\partial_{x}^{k}\rho_{n}\|_{L^{2}_{x}}\|\partial_{x}^{k+1}u_{n}\|_{L^{2}_{x}}\right) \label{HOR1}
    \end{align}
    Additionally, for $l \ge 1$ and $\rho_{n} \in C(0,T; C^{l+1}(\mathbb{T})),~u_{n} \in C(0,T; C^{l}(\mathbb{T})),~V_{n} \in C^{1}(0,T; C^{l+1}(\mathbb{T}))$ satisfying \eqref{Veqn}, we have
    \begin{align} \begin{aligned}
     \frac{1}{2}\frac{d}{dt}\int_{\mathbb{T}} |\partial_{x}^{l}V_{n}|^{2}~dx &= - \int_{\mathbb{T}} \partial_{x}^{l} \left( (u_{n} + \frac{\lambda_{n}(\rho_{n})}{\rho_{n}^{2}}\partial_{x}\rho_{n}) \partial_{x}V_{n}\right) \partial_{x}^{l}V_{n}~dx + \int_{\mathbb{T}} \partial_{x}^{l} \left( \frac{\lambda_{n}(\rho_{n})}{\rho_{n}}\partial_{x}^{2}V_{n}\right)\partial_{x}^{l}V_{n} \\[1ex]  &- \int_{\mathbb{T}} \partial_{x}^{l} \left( \frac{\lambda_{n}'(\rho_{n})\rho_{n} + \lambda_{n}(\rho_{n})}{(\lambda_{n}(\rho_{n}))^{2}}V_{n}^{2}\right)\partial_{x}^{l}V_{n}~dx. \label{HOR2} \end{aligned} 
    \end{align}
\end{proposition}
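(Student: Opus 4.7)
The plan is to obtain both identities by differentiating the relevant governing equation in space and testing against the same differentiated unknown. For \eqref{HOR2} no estimate is required — the statement is a direct identity. Starting from \eqref{Veqn}, I would apply $\partial_x^l$ to both sides, multiply by $\partial_x^l V_n$, and integrate over $\mathbb{T}$. The regularity hypotheses $\rho_n\in C_tC^{l+1}_x$, $u_n\in C_tC^l_x$, $V_n\in C^1_tC^{l+1}_x$ are exactly what is needed to commute $\partial_x^l$ with $\partial_t$ and to make all spatial integrals classical. The $\partial_t V_n$ term becomes $\tfrac{1}{2}\tfrac{d}{dt}\int_{\mathbb{T}}|\partial_x^lV_n|^2\,dx$, and moving the three remaining terms of \eqref{Veqn} to the right-hand side reproduces \eqref{HOR2} verbatim.

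For \eqref{HOR1}, I would first differentiate the continuity equation \eqref{a1} $k$ times in space to obtain $\partial_t\partial_x^k\rho_n = -\partial_x^{k+1}(\rho_n u_n)$. Multiplying by $\partial_x^k\rho_n$ and integrating over $\mathbb{T}$ yields
\begin{equation*}
\tfrac{1}{2}\tfrac{d}{dt}\|\partial_x^k\rho_n\|_{L^2_x}^2 = -\sum_{j=0}^{k+1}\binom{k+1}{j}\int_{\mathbb{T}}\partial_x^j\rho_n\,\partial_x^{k+1-j}u_n\,\partial_x^k\rho_n\,dx.
\end{equation*}
The endpoint $j=0$ is estimated directly by H\"older to give the second template $\|\rho_n\|_{L^\infty_x}\|\partial_x^{k+1}u_n\|_{L^2_x}\|\partial_x^k\rho_n\|_{L^2_x}$. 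The endpoint $j=k+1$ contains $\int u_n\,\partial_x^{k+1}\rho_n\,\partial_x^k\rho_n\,dx$; one integration by parts converts this to $-\tfrac{1}{2}\int(\partial_x u_n)(\partial_x^k\rho_n)^2\,dx$, and the one-dimensional Sobolev embedding $H^{k-1}(\mathbb{T})\hookrightarrow L^\infty(\mathbb{T})$ (valid for $k\ge 2$) bounds $\|\partial_x u_n\|_{L^\infty_x}$ by $C\|\partial_x^k u_n\|_{L^2_x}$ modulo lower-order factors absorbed in $C$, matching the first template.

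The intermediate terms $1\le j\le k$ constitute the main obstacle: one must redistribute derivatives so that only $\|\rho_n\|_{L^\infty_x}$, $\|\partial_x^k\rho_n\|_{L^2_x}$, $\|\partial_x^k u_n\|_{L^2_x}$ and $\|\partial_x^{k+1}u_n\|_{L^2_x}$ appear. The natural tool is the standard one-dimensional Moser-type product inequality
\begin{equation*}
\|\partial_x^j\rho_n\,\partial_x^{k+1-j}u_n\|_{L^2_x}\le C\bigl(\|\rho_n\|_{L^\infty_x}\|\partial_x^{k+1}u_n\|_{L^2_x}+\|u_n\|_{L^\infty_x}\|\partial_x^{k+1}\rho_n\|_{L^2_x}\bigr),\qquad 1\le j\le k,
\end{equation*}
obtained from Gagliardo--Nirenberg interpolation. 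Paired with $\partial_x^k\rho_n$ in $L^2_x$, the first contribution falls under the second template of \eqref{HOR1}. The second contribution, which contains $\partial_x^{k+1}\rho_n$, is handled by one further integration by parts against $\partial_x^k\rho_n$, after which the remaining integrals are controlled by $\|u_n\|_{L^\infty_x}\|\partial_x^k\rho_n\|_{L^2_x}^2$ and $\|\partial_x u_n\|_{L^\infty_x}\|\partial_x^k\rho_n\|_{L^2_x}^2$; both yield the first template once $\|u_n\|_{L^\infty_x}+\|\partial_x u_n\|_{L^\infty_x}$ is bounded by $C\|\partial_x^k u_n\|_{L^2_x}$ via the same Sobolev embedding. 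Summing the $\binom{k+1}{j}$ weighted contributions gives \eqref{HOR1}. The only non-routine point is this redistribution step; the rest is accounting.
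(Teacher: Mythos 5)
Your treatment of \eqref{HOR2} is fine: it is a pure identity obtained by applying $\partial_x^l$ to \eqref{Veqn}, testing with $\partial_x^lV_n$, and integrating, and the stated regularity justifies every step. The paper itself gives no details (it cites Lemma 3.10 of \cite{HCL}), so the comparison below is with the standard commutator argument that reference uses.

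For \eqref{HOR1} there is a genuine gap in your handling of the intermediate Leibniz terms $1\le j\le k$. The product inequality you invoke,
\begin{equation*}
\|\partial_x^j\rho_n\,\partial_x^{k+1-j}u_n\|_{L^2_x}\le C\bigl(\|\rho_n\|_{L^\infty_x}\|\partial_x^{k+1}u_n\|_{L^2_x}+\|u_n\|_{L^\infty_x}\|\partial_x^{k+1}\rho_n\|_{L^2_x}\bigr),
\end{equation*}
is the Moser estimate for $k+1$ derivatives of the product $\rho_nu_n$, and it necessarily charges $k+1$ derivatives to \emph{both} factors. The quantity $\|\partial_x^{k+1}\rho_n\|_{L^2_x}$ does not appear on the right-hand side of \eqref{HOR1}, is not controlled anywhere in the induction, and is not even classically defined under the hypothesis $\rho_n\in C^1(0,T;C^k(\mathbb{T}))$. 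Your proposed remedy --- ``one further integration by parts against $\partial_x^k\rho_n$'' --- cannot be executed at that stage: once the term has been absorbed into a norm bound it is a number, not an integral, and there is nothing left to integrate by parts. (Integration by parts is only available for the single endpoint term $\int u_n\,\partial_x^{k+1}\rho_n\,\partial_x^k\rho_n\,dx$, which you already dispatched correctly.) The fix is to apply the product estimate at the right level: write $\partial_x(\rho_nu_n)=u_n\partial_x\rho_n+\rho_n\partial_xu_n$ \emph{before} distributing the remaining $k$ derivatives. The terms with $1\le j\le k$ are exactly the Leibniz terms of $\partial_x^k(\rho_n\cdot\partial_xu_n)$ together with the commutator $[\partial_x^k,u_n]\partial_x\rho_n$; Moser and Kato--Ponce at order $k$ then give
\begin{equation*}
\|\partial_x^k(\rho_n\partial_xu_n)\|_{L^2_x}+\|[\partial_x^k,u_n]\partial_x\rho_n\|_{L^2_x}\le C\bigl(\|\rho_n\|_{L^\infty_x}\|\partial_x^{k+1}u_n\|_{L^2_x}+\|\partial_xu_n\|_{L^\infty_x}\|\partial_x^k\rho_n\|_{L^2_x}+\|\partial_x\rho_n\|_{L^\infty_x}\|\partial_x^ku_n\|_{L^2_x}\bigr),
\end{equation*}
with at most $k$ derivatives ever falling on $\rho_n$. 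Combining this with $\|\partial_xu_n\|_{L^\infty_x}\le C\|\partial_x^ku_n\|_{L^2_x}$ and $\|\partial_x\rho_n\|_{L^\infty_x}\le C\|\partial_x^k\rho_n\|_{L^2_x}$ (valid on $\mathbb{T}$ for $k\ge2$ by Poincar\'e applied to the mean-zero derivatives together with \eqref{Sinf}) yields exactly the two templates in \eqref{HOR1}. With that regrouping your argument closes; without it, it does not.
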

\begin{proof}
    The proof is identical to that of Lemma 3.10 in \cite{HCL}.
\end{proof}
Taking $l = 1$ in \eqref{HOR2}, one can integrate by parts to obtain
\begin{align*}
    \frac{1}{2}\frac{d}{dt} \int_{\mathbb{T}} |\partial_{x}V_{n}|^{2}~dx &+ \int_{\mathbb{T}} \frac{\lambda_{n}(\rho_{n})}{\rho_{n}} |\partial_{x}^{2}V_{n}|^{2}~dx \\[1ex] &= \int_{\mathbb{T}} \left(u_{n} + \frac{\lambda_{n}(\rho_{n})}{\rho_{n}^{2}} \partial_{x}\rho_{n}\right) \partial_{x}V_{n} \partial_{x}^{2}V_{n}~dx + \int_{\mathbb{T}} \frac{\lambda_{n}'(\rho_{n})\rho_{n} + \lambda_{n}(\rho_{n})}{(\lambda_{n}(\rho_{n}))^{2}}V_{n}^{2} \partial_{x}^{2}V_{n}~dx =: J_{1} + J_{2}.
\end{align*}
Due to \eqref{buk1}, we have
\begin{align*}
    J_{1} &\le \|u_{n} + \rho_{n}^{-2}\lambda_{n}(\rho_{n})\partial_{x}\rho_{n}\|_{L^{2}_{x}}\|\partial_{x}V_{n}\|_{L^{\infty}_{x}}\|\partial_{x}^{2}V_{n}\|_{L^{2}_{x}} \\[1ex] &\le M_{1}^{n}\|\partial_{x}V_{n}\|_{L^{\infty}_{x}}\|\partial_{x}^{2}V_{n}\|_{L^{2}_{x}}.
\end{align*} Using the Sobolev inequality \eqref{Sinf} and Young's inequality, we arrive at \begin{align*}
    J_{1} \le  \epsilon \|\partial_{x}^{2}V_{n}\|_{L^{2}_{x}}^{2} + M_{1}^{n} \|\partial_{x}V_{n}\|_{L^{2}_{x}}^{2} \le \epsilon \|\partial_{x}^{2}V_{n}\|_{L^{2}_{x}}^{2} + M_{1}^{n}.
\end{align*} Meanwhile,
\begin{align*}
    J_{2} \le M_{1}^{n} \|V_{n}\|_{L^{4}_{x}}^{2}\|\partial_{x}^{2
    }V_{n}\|_{L^{2}_{x}} \le M_{1}^{n} + \epsilon\|\partial_{x}^{2}V_{n}\|_{L^{2}_{x}}^{2},
\end{align*} using Young's inequality and Proposition \ref{Vest1}. Therefore we have
\begin{align*}
     \frac{1}{2}\frac{d}{dt} \int_{\mathbb{T}} |\partial_{x}V_{n}|^{2}~dx + \int_{\mathbb{T}} \frac{\lambda_{n}(\rho_{n})}{\rho_{n}} |\partial_{x}^{2}V_{n}|^{2}~dx \le 2\epsilon\|\partial_{x}^{2}V_{n}\|_{L^{2}_{x}}^{2} + M_{1}^{n}
\end{align*} Using the observation $\rho_{n}^{-1}\lambda_{n}(\rho_{n}) = \gamma_{n}\rho_{n}^{\gamma_{n}} \ge \gamma_{n} \underline{\rho_{n}}^{\gamma_{n}}$ and choosing $\epsilon$ small enough (depending on $n$), we have
\begin{align} \label{Vest3}
     \frac{1}{2}\frac{d}{dt} \int_{\mathbb{T}} |\partial_{x}V_{n}|^{2}~dx + C  \|\partial_{x}^{2}V_{n}\|^{2}_{L^{2}_{x}} \le  M_{1}^{n},
\end{align} where $C > 0$ is a constant depending on $n$. Integrating in time gives us
\begin{equation}
    \|\partial_{x}V_{n}\|_{L^{\infty}_{t}L^{2}_{x}} + \|\partial_{x}^{2}V_{n}\|_{L^{2}_{t,x}} \le M_{1}^{n}.
\end{equation} Next, from the relationship $V_{n} = \lambda_{n}\partial_{x}u_{n}$, we infer that
\begin{equation} \label{dxku2}
    \partial_{x}^{2}u_{n} = \lambda_{n}^{-1}\partial_{x}V_{n} + V_{n} \partial_{x} (\lambda_{n}^{-1})
\end{equation} and so
\begin{align} \label{dxku2.1}
    \|\partial_{x}^{2}u_{n}\|_{L^{\infty}_{t}L^{2}_{x}} &\le \|\lambda_{n}^{-1}\|_{L^{\infty}_{t,x}}\|\partial_{x}V_{n}\|_{L^{\infty}_{t}L^{2}_{x}} + \|V_{n}\|_{L^{\infty}_{t,x}}\|\partial_{x} (\lambda_{n}^{-1})\|_{L^{\infty}_{t}L^{2}_{x}} \le M_{1}^{n}.
\end{align}
Differentiating \eqref{dxku2} once more,
\begin{equation*}
    \partial_{x}^{3}u_{n} = \partial_{x}^{2}V_{n} \lambda_{n}^{-1} + 2\partial_{x}V_{n}\partial_{x} \lambda_{n}^{-1} + V_{n} \partial_{x}^{2} \lambda_{n}^{-1}
\end{equation*} and again we estimate
\begin{equation} \label{dx3u1}
    \|\partial_{x}^{3}u_{n}\|_{L^{2}_{t,x}} \le \|\partial_{x}^{2}V_{n}\|_{L^{2}_{t,x}}\|\lambda_{n}^{-1}\|_{L^{\infty}_{t,x}} + 2\|\partial_{x}V_{n}\|_{L^{\infty}_{t}L^{2}_{x}}\|\partial_{x} \lambda_{n}^{-1}\|_{L^{2}_{t}L^{\infty}_{x}} + \|V_{n}\|_{L^{\infty}_{t,x}}\|\partial_{x}^{2} \lambda_{n}^{-1}\|_{L^{2}_{t,x}}.
\end{equation} Note that $\partial_{x} \lambda_{n}^{-1} = -(1+\gamma_{n}^{-1})\rho_{n}^{-\gamma_{n}-2}\partial_{x}\rho_{n}$ and so $\|\partial_{x} \lambda_{n}^{-1}\|_{L^{2}_{t}L^{\infty}_{x}} \le M_{1}^{n} \|\partial_{x}\rho_{n}\|_{L^{2}_{t}L^{\infty}_{x}}$. Using the Sobolev inequality \eqref{Sinf} we can show that
\begin{equation*}
    \|\partial_{x}\rho_{n}\|_{L^{2}_{t}L^{\infty}_{x}} \le M_{1}^{n} + M_{1}^{n}\|\partial_{x}^{2}\rho_{n}\|_{L^{2}_{t,x}}.
\end{equation*}
By computing $\partial_{x}^{2}\lambda_{n}^{-1}$ explicitly one can also show that $\|\partial_{x}^{2}\lambda_{n}^{-1}\|_{L^{2}_{t,x}} \le M_{1}^{n} + M_{1}^{n} \|\partial_{x}^{2}\rho_{n}\|_{L^{2}_{t,x}}$. Returning to \eqref{dx3u1}, we have
\begin{equation} \label{dx3u2}
     \|\partial_{x}^{3}u_{n}\|_{L^{2}_{t,x}} \le M_{1}^{n} + M_{1}^{n}\|\partial_{x}^{2}\rho_{n}\|_{L^{2}_{t,x}}.
\end{equation} Taking $k=2$ in \eqref{HOR1}, we have
\begin{align*}
    \frac{1}{2}\frac{d}{dt}\|\partial_{x}^{2}\rho_{n}\|_{L^{2}_{x}}^{2} &\le C\left( \|\partial_{x}^{2}u_{n}\|_{L^{2}_{x}} \|\partial_{x}^{2}\rho_{n}\|^{2}_{L^{2}_{x}} + \|\rho_{n}\|_{L^{\infty}_{x}} \|\partial_{x}^{2}\rho_{n}\|_{L^{2}_{x}}\|\partial_{x}^{3}u_{n}\|_{L^{2}_{x}}\right) \\[1ex] &\le M_{1}^{n}\|\partial_{x}^{2}\rho_{n}\|_{L^{2}_{x}},
\end{align*} using \eqref{dx3u2} and the bounds previously obtained. An application of Gronwall's inequality grants us $\|\partial_{x}^{2}\rho_{n}\|_{L^{\infty}_{t}L^{2}_{x}} \le M_{1}^{n}$. In light of this, the inequality \eqref{dx3u2} finally gives us $\|\partial_{x}^{3}u_{n}\|_{L^{2}_{t,x}} \le M_{1}^{n}$. Additionally recalling the bound \eqref{dxku2.1}, we have shown that \eqref{bugoalk2} holds true. Thus we have 
\begin{equation}
        \|\rho_{n}\|_{L^{\infty}_{t}H^{2}_{x}} + \|u_{n}\|_{L^{\infty}_{t}H^{2}_{x}}  + \|u_{n}\|_{L^{2}_{t}H^{3}_{x}} \le M_{1}^{n}, \label{buk2}
\end{equation} 
where $M_{1}^{n}$ satisfies $\displaystyle \sup_{t \in [0,T^{*})}M_{1}^{n} < + \infty$. Thus the base step is complete. The inductive step is deferred to the Appendix. With this, we conclude the proof of Lemma \ref{blowup}.

\subsection{Completing the proof of global existence} \label{sectionproofglobal}
We are now ready to complete the proof of Theorem \ref{globalexistence} and obtain the existence of a global unique regular solution.
\begin{proof}[Proof of Theorem \ref{globalexistence}]
We have previously asserted the existence of a unique global regular solution $(\rho_{n}, u_{n})$ to \eqref{a1}-\eqref{a2} on $[0,T_{0}]$, for some $T_{0} > 0$ in Theorem \ref{localexistence}. The blow-up result we obtained in the previous subsection (Lemma \ref{blowup}) also tells us that our solution can in fact be extended past $T_{0}$ provided the density $\rho_{n}$ does not hit $0$ anywhere in $\mathbb{T} \times [0,T_{0}]$. Since we know that $\rho_{n} \ge \frac{r_{0}}{2} > 0$ on $\mathbb{T} \times [0,T_{0}]$, we can extend this solution and declare that $(\rho_{n}, u_{n})$ will possess a maximal interval of existence $[0, T^{*}),$ where $T^{*} > T_{0}$. For the sake of a contradiction, let us assume that $T^{*}$ is finite. Two particular implications of this statement are that \begin{equation} \label{3.4a}
\rho_{n}(\cdot,t) > 0 \text{ for each }t \in [0,T^{*}),\end{equation} and \begin{equation} \label{3.4b}
    \inf_{t \in [0,T^{*})} \min_{x \in \mathbb{T}} \rho_{n}(t,x) = 0.
\end{equation}
Let us now recall the original form of the Aw-Rascle system on our maximal interval of existence
\begin{subequations} 
\newcommand{\mystrut}{\vphantom{\pder{}{}}}
\begin{numcases}{}
    \partial_{t} \rho_{n} + \partial_{x}(\rho_{n} u_{n}) = 0, &\text{ in } $\mathbb{T} \times [0, T^{*}), $ \label{a}  \\[1ex]
    \partial_{t} (\rho_{n} w_{n}) + \partial_{x} (\rho_{n} w_{n} u_{n}) = 0, &\text{ in } $\mathbb{T} \times [0, T^{*}). $ \label{b} 
\end{numcases}
\end{subequations}
Since we know that $(\rho_{n}, u_{n})$ possesses the same regularity as given in the statement of this theorem, we can proceed in a similar way to \cite{Burtea_2020} and \cite{Constantin_2020} and define
\begin{equation} \label{W}
    W_{n} = \frac{\partial_{x}w_{n}}{\rho_{n}}.
\end{equation} We wish to show that $W_{n}$ is bounded from above uniformly in time. It will be seen that this is sufficient to obtain a lower bound on $\rho_{n}$, which will become clear when we derive the evolution equation for $1/\rho_{n}$ and apply a maximum-principle argument. Our $W_{n}$ corresponds to the so-called ‘effective pressure' used by Burtea and Haspot in \cite{Burtea_2020} and the function ‘$X$' by Constantin et al in Theorem 1.5, \cite{Constantin_2020}.  A straightforward computation reveals that
\begin{equation} \label{3.4c}
    \rho_{n}^{2}\partial_{t}W_{n} = \rho_{n} \partial_{t} \partial_{x} w_{n} - \partial_{x}w_{n} \partial_{t} \rho_{n}.
\end{equation}
Exploiting \eqref{a}, dividing by $\rho_{n} > 0$ and differentiating in space, one can show that \eqref{b} is equivalent to
\begin{equation} \label{3.4e}
    \partial_{t}\partial_{x}w_{n} = - \partial_{x} u_{n} \partial_{x} w_{n} - u_{n} \partial_{x}^{2}w_{n}.
\end{equation} Substituting \eqref{3.4e} and \eqref{a} into \eqref{3.4c} and dividing by $\rho_{n} > 0$ gives
\begin{align} \notag
   \rho_{n} \partial_{t}W_{n} &= -\rho_{n}W_{n}\partial_{x}u_{n} - u_{n} \partial_{x}^{2}w_{n} - \frac{\partial_{x}w_{n}\partial_{t}\rho_{n}}{\rho_{n}} \\[1ex] &=  - \rho_{n} \partial_{x} w_{n} W_{n} - u_{n} \partial_{x}\rho_{n} W_{n} - u_{n} \rho_{n} \partial_{x}W_{n} + W_{n}(\rho_{n}\partial_{x}u_{n} + u_{n} \partial_{x} \rho_{n})     \notag       \\[1ex]
    &= - u_{n}\rho_{n} \partial_{x}W_{n}. \label{wntransport}
\end{align} where we have used the facts that $\partial_{x}w_{n} = W_{n}\rho_{n}$ and $\partial_{x}^{2}w_{n} = W_{n}\partial_{x}\rho_{n} + \rho_{n}\partial_{x}W_{n}$, which both follow from \eqref{W}. Dividing by $\rho_{n}$ leaves us with
\begin{equation} \label{3.4f}
    \partial_{t}W_{n} + u_{n}\partial_{x}W_{n}=0.
\end{equation} It is clear that $w_{n}$ and $W_{n}$ satisfy the same transport equation. Now, we note that $w_{n} \in C(0,T; H^{3})$ due to the relation $w_{n} = u_{n} + \partial_{x}p_{n}(\rho_{n})$ and the regularity $\rho_{n}, u_{n} \in C(0,T; H^{4})$ for all $T < T^{*}$. This allows us to deduce that $W_{n} \in C(0,T; H^{2}) \subset C([0,T] \times \mathbb{T})$ for all $T < T^{*}$. The equation \eqref{3.4f} then implies that $\partial_{t}W_{n} \in C(0,T; H^{1}) \subset C( [0,T] \times \mathbb{T})$ and so $W_{n} \in C^{1}([0,T] \times \mathbb{T})$ for all $T < T^{*}$. We now introduce a maximum function $W_{n}^{M} : \mathbb{R}^{+} \to \mathbb{R}$ associated to $W_{n}$.
For each $t \in [0,T^{*})$, it is true that $W_{n}(\cdot, t)$ attains a maximum at some point $x_{t} \in \mathbb{T}$, thanks to the regularity of $W_{n}$ on $\mathbb{T} \times [0,T^{*})$. Thus, the map $t \to x_{t}$ is well-defined on $[0,T^{*})$ and the function
\begin{equation}
    W_{n}^{M}(t) := W_{n}(x_{t}, t) \equiv \max_{x \in \mathbb{T}} W_{n}(x,t)
\end{equation} is also well-defined and Lipschitz continuous. The Rademacher theorem implies that $W_{n}^{M}$ is differentiable almost everywhere on $[0,T^{*})$. We now wish to prove that \begin{equation} \label{3.4g}
    \left(W_{n}^{M} \right)'(t) = \partial_{t} W_{n}(x_{t},t)
\end{equation} for all $t \in [0,T^{*})$ where $W_{n}^{M}$ is differentiable. Using the classical definition of the derivative, we have for all $t$ where $W_{n}^{M}$ is differentiable,
\begin{align*}
\left(W_{n}^{M}\right)'(t) &= \lim_{h \searrow 0}   \left[ \frac{W_{n}^{M}(t+h) - W_{n}^{M}(t)}{h} \right] = \lim_{h \searrow 0} \left[ \frac{W_{n}(x_{t+h},t+h) - W_{n}(x_{t},t))}{h} \right]  \\[1ex] &\ge  \lim_{h \searrow 0} \left[ \frac{W_{n}(x_{t},t+h) - W_{n}(x_{t},t)}{h}  \right] = \partial_{t}W_{n}(x_{t},t). 
\end{align*} On the other hand, using an alternative definition of the derivative, we have
\begin{align*}
    \left(W_{n}^{M} \right)'(t) &= \lim_{h \searrow 0} \left[ \frac{W_{n}^{M}(t) - W_{n}^{M}(t-h)}{h}   \right] = \lim_{h \searrow 0} \left[ \frac{W_{n}(x_{t},t) - W_{n}(x_{t-h}, t-h)}{h} \right] \\[1ex] &\le \lim_{h \searrow 0} \left[ \frac{W_{n}(x_{t},t)-W_{n}(x_{t},t-h)}{h} \right] = \partial_{t} W_{n}(x_{t},t),
\end{align*} and so \eqref{3.4g} holds true. Evaluating \eqref{3.4f} along the points $(x_{t},t)$ which correspond to the maximum points of $W_{n}$, we have
\begin{equation}
    \partial_{t}W_{n}^{M}(t) = - u_{n}(x_{t},t)\underbrace{\partial_{x}W_{n}(x_{t},t)}_{=0} = 0,
\end{equation} for all $t \in [0,T^{*})$ where $W_{n}^{M}$ is differentiable, since $x_{t}$ is a maximum point of $W_{n}(\cdot, t)$. This tells us that 
\begin{equation*}
    \max_{x \in \mathbb{T}} \frac{\partial_{x}w_{n}}{\rho_{n}}(x,t) = \max_{x \in \mathbb{T}} \frac{\partial_{x}w_{n}^{0}}{\rho_{n}^{0}}(x)
\end{equation*} for any $t \in [0,T^{*})$, and as a consequence
\begin{equation} \label{dxwbound}
    \left( \frac{\partial_{x}w_{n}}{\rho_{n}
} \right)(x,t) \le \max_{\mathbb{T}} \frac{\partial_{x}w_{n}^{0}}{\rho_{n}^{0}} =: \mathcal{M}_{n}^{0}, ~\forall ~(x,t) \in \mathbb{T} \times [0,T^{*}).
\end{equation} We note that since we are working with periodic boundary conditions, the quantity $\max_{\mathbb{T}} \partial_{x}w_{n}^{0}$ is non-negative and therefore so is $\mathcal{M}_{n}^{0}$ due to our assumption $\rho_{n}^{0} > 0$ on the initial data. Next, we turn to the evolution equation satisfied by $1/\rho_{n}$ on $\mathbb{T} \times [0,T^{*})$. Since $\rho_{n} \ne 0$ on this domain, the function $1/\rho_{n}$ possesses the same regularity as $\rho_{n}$ and it is straightforward to verify using the continuity equation that 
\begin{equation}
    \partial_{t} \left( \frac{1}{\rho_{n}} \right) + u_{n} \partial_{x} \left( \frac{1}{\rho_{n}} \right) = \frac{1}{\rho_{n}} \partial_{x} u_{n}.
\end{equation}
Using the definition of $w_{n}$, namely that $w_{n} = u_{n} + \partial_{x}p_{n}(\rho_{n})$, this becomes
\begin{align} \notag
    \partial_{t} \left( \frac{1}{\rho_{n}} \right) &= -u_{n} \partial_{x}\left( \frac{1}{\rho_{n}}  \right) + \frac{1}{\rho_{n}} \partial_{x}w_{n} - \frac{1}{\rho_{n}}\left[ p_{n}''(\rho_{n})|\partial_{x}\rho_{n}|^{2} + p_{n}'(\rho_{n})\partial_{x}^{2}\rho_{n}   \right] \\[1ex] &= -u_{n} \partial_{x}\left( \frac{1}{\rho_{n}}  \right) + \frac{1}{\rho_{n}} \partial_{x}w_{n} - \frac{p_{n}''(\rho_{n})}{\rho_{n}} |\partial_{x}\rho_{n}|^{2} + p_{n}'(\rho_{n})\left[ -2\rho_{n}^{2} \partial_{x} \left( \frac{1}{\rho_{n}} \right)^{2} + \rho_{n} \partial_{x}^{2}\left( \frac{1}{\rho_{n}} \right)  \right], \label{3.4h}
\end{align} where we have used the relations
\begin{align*}
    \partial_{x} \rho_{n} = - \rho_{n}^{2} \partial_{x} \left( \frac{1}{\rho_{n}} \right), ~~
    \partial_{x}^{2} \rho_{n} = 2\rho_{n}^{3} \partial_{x} \left( \frac{1}{\rho_{n}}\right)^{2} - \rho_{n}^{2} \partial_{x}^{2} \left( \frac{1}{\rho_{n}} \right).
\end{align*} In a similar way to the previous argument involving $W_{n}$, we can define $
    P_{n}(x,t) := \frac{1}{\rho_{n}(x,t)}$ and the corresponding maximum function \begin{equation*}
    P_{n}^{M}(t) := \max_{x \in \mathbb{T}}  \frac{1}{\rho_{n}(x,t)} \text{ on } [0,T^{*}).\end{equation*} One can repeat the argument for $W_{n}^{M}$ to conclude that $P_{n}^{M}$ is differentiable a.e. on $[0,T^{*})$. We then have from equation \eqref{3.4h} that for all $t \in [0,T^{*})$ where $P_{n}^{M}$ is differentiable,
\begin{equation}
    \partial_{t}P_{n}^{M} = - u_{n} \partial_{x} P_{n}^{M} + \rho_{n}^{-1} \partial_{x}w_{n} - P_{n}^{M} p_{n}''(\rho_{n})|\rho_{n}^{2} \partial_{x}P_{n}^{M}|^{2} - 2p_{n}'(\rho_{n})\rho_{n}^{2} \left( \partial_{x} P_{n}^{M}  \right)^{2} + \rho_{n} p_{n}'(\rho_{n}) \partial_{x}^{2}P_{n}^{M}.
\end{equation} Using the facts that $\rho_{n}(\cdot, t) > 0$ on $[0,T^{*})$, $\partial_{x}P_{n}^{M}(t) = 0 $ and $ \partial_{x}^{2}P_{n}^{M} \le 0$, this equation implies the inequality
\begin{equation}
    (\partial_{t}P_{n}^{M})(t) \le \frac{\partial_{x}w_{n}}{\rho_{n}}(x,t) \le \mathcal{M}_{n}^{0},~\text{ for a.e. } ~t \in [0,T^{*}),
\end{equation} where we have made use of \eqref{dxwbound}. Integrating in time gives us
\begin{align} \label{ubdensity}
    \rho_{n}(x,t) &\ge \frac{1}{\mathcal{M}_{n}^{0}t + \displaystyle (\inf_{\mathbb{T}} \rho_{n}^{0})^{-1}}.
\end{align}
This lower bound contradicts our assumption \eqref{3.4b} which was that \begin{equation*}
    \inf_{t \in [0,T^{*})} \min_{x \in \mathbb{T}} \rho_{n}(t,x) = 0.
\end{equation*} We conclude that $T^{*} = + \infty$ and thus our solution is global in time. The uniqueness follows from the fact that our local solution was shown to be unique. Indeed, suppose that we have two global regular solutions $(\rho_{n}^{1}, u_{n}^{1})$ and $(\rho_{n}^{2}, u_{n}^{2})$ coinciding on $[0,T_{1}]$ and further assume that they differ almost everywhere on some interval $[T_{1}, T_{2}]$, i.e. that $\|\rho_{n}^{1} - \rho_{n}^{2} \|_{L^{\infty}(T_{1}, T; L^{\infty})} > 0$ for all $T \in (T_{1}, T_{2}]$ (and analogously for $u_{n}^{i}$). Then taking the initial data $(\rho_{n}^{0}, u_{n}^{0}) := (\rho_{n}^{1}(T_{1}), u_{n}^{1}(T_{1}))$, we deduce from Theorem \ref{localexistence} that $(\rho_{n}^{1}, u_{n}^{1}) =(\rho_{n}^{2}, u_{n}^{2})$ on an interval $[T_{1}, T_{1} + \epsilon]$ for some $\epsilon > 0$, which contradicts our assumption.
\end{proof}
\begin{remark} \label{LBRemark}
    If we assume the existence of positive constants $C, \underline{\rho^{0}}$ independent of $n$ such that \linebreak $ \esssup_{\mathbb{T}} \frac{\partial_{x}w_{n}^{0}}{\rho_{n}^{0}} \le C$ and $\rho_{n}^{0} \ge \underline{\rho^{0}} > 0$, then the bound \eqref{ubdensity} gives us the uniform (in $n$) estimate \begin{equation*}
       \rho_{n}(x,t) \ge \frac{1}{Ct + (\underline{\rho^{0}})^{-1}}.  
    \end{equation*}
\end{remark}
\begin{remark}
    Our proof does not rely on the precise form of the offset function $p_{n}$, but it does require that $p_{n}(r)$ is  increasing for $r \in [0, \overline{\rho_{n}}]$. Thus we expect this result to hold for a wide class of offset functions. In particular, our argument works for the choice $p_{\epsilon}(\rho_{\epsilon}) = \epsilon \rho_{\epsilon}^{\gamma}(1-\rho_{\epsilon})^{-\beta}$ taken by Chaudhuri et al in \cite{HCL}.
\end{remark}
\section{Uniform in $n$ estimates} In this section we obtain uniform estimates on the global regular solution $(\rho_{n}, u_{n})$, whose existence was asserted in the previous section.
\subsection{Bounds from the energy estimates} 
Assuming the condition \eqref{thm2RHObounds} and the first bound of \eqref{thm2wn} on the initial data, we have the following uniform estimate by virtue of our energy estimates.
 \begin{align}
    \begin{split} \label{energyunif}
        &\|\rho_{n}\|_{L^{\infty}_{t}L^{1}_{x}} + \|\sqrt{\rho_{n}}w_{n}\|_{L^{\infty}_{t}L^{2}_{x}} + \|H_{n}(\rho_{n})\|_{L^{\infty}_{t}L^{1}_{x}} + \|\sqrt{\rho_{n}}\partial_{x}p_{n}(\rho_{n})\|_{L^{2}_{t}L^{2}_{x}}\le C.
        \end{split}
    \end{align}
Note that the assumption \eqref{thm2RHObounds} implies that $\|H_{n}(\rho_{n}^{0})\|_{L^{1}_{x}}$ is uniformly bounded in $n$. Thus the estimate from Lemma \ref{bdentropy} is also independent of $n$.
\subsection{Bounding the desired velocity}
The goal of this section is to obtain the uniform bound \eqref{thm2wnuniformbound} on the sequence of desired velocities, $w_{n}$. In the remainder of the paper we assume the hypotheses \eqref{thm2RHObounds}-\eqref{thm2wn} and denote by $C$ a positive constant independent of $n$. Our first result is a bound on the spatial derivative of $w_{n}$. 
\begin{proposition} \label{dxwuniform}
    Provided $\gamma_{n} \ge 1$ we have
    \begin{equation}
        \|\partial_{x}w_{n}\|_{L^{\infty}_{t}L^{4/3}_{x}} \le C.
    \end{equation}
\end{proposition}
\begin{proof}
   It follows from \eqref{3.4f} that the potential $W_{n} = \rho_{n}^{-1}\partial_{x}w_{n}$ satisfies 
    \begin{equation}
    \partial_{t}(\rho_{n}W_{n}) + \partial_{x}(\rho_{n}W_{n}u_{n}) = 0,
    \end{equation} due to the continuity equation. Thus $(\rho_{n}, W_{n}, u_{n})$ satisfies the original momentum equation \eqref{AR-2}. Multiplying by $W_{n}$ and integrating by parts over space and time leads to
    \begin{equation} \label{conservationdxwn}
        \int_{\mathbb{T}} \frac{(\partial_{x}w_{n})^{2}}{\rho_{n}}(x,t)~dx = \int_{\mathbb{T}} \frac{(\partial_{x}w_{n}^{0})^{2}}{\rho_{n}^{0}}(x) ~dx  =: \mathcal{D}_{n}^{0}.
    \end{equation}
    Noticing that by Young's inequality \begin{equation*}
        |\partial_{x}w_{n}|^{4/3} = \left|\frac{\partial_{x}w_{n}}{\sqrt{\rho_{n}}} \right|^{4/3} |\sqrt{\rho_{n}}|^{4/3} \le \frac{2}{3}\left|\frac{\partial_{x}w_{n}}{\sqrt{\rho_{n}}} \right|^{2} + \frac{1}{3}|\rho_{n}|^{2},
    \end{equation*} we infer from \eqref{conservationdxwn} that \begin{equation} \label{dxwunif1}
\|\partial_{x}w_{n}\|_{L^{\infty}_{t}L^{4/3}_{x}}^{3/4} \le \mathcal{D}_{n}^{0} + \|\rho_{n}\|_{L^{\infty}_{t}L^{2}_{x}}^{2}.
    \end{equation} Next, recall from \eqref{energyunif} we have that \begin{equation} \label{Hnbound}
        \|H_{n}(\rho_{n})\|_{L^{\infty}_{t}L^{1}_{x}} = \frac{1}{\gamma_{n}+1}\esssup_{t \in [0,T]} \int_{\mathbb{T}} \rho_{n}^{\gamma_{n}+1}~dx  =\frac{1}{\gamma_{n}+1}\|\rho_{n}\|_{L^{\infty}_{t}L^{\gamma_{n}+1}_{x}}^{\gamma_{n}+1} \le C. \end{equation}
        Therefore the assumption $\gamma_{n} \ge 1$ guarantees that $\|\rho_{n}\|_{L^\infty_{t}L^{2}_{x}} \le C$ (uniformly in $n$). Returning to \eqref{dxwunif1}, we can obtain the required bound.
   \end{proof}
   \begin{remark}
       The exponent $4/3$ is taken for convenience. A similar argument can be used to show $\|\partial_{x}w_{n}\|_{L^{\infty}_{t}L^{p}_{x}} \le C$ for any $p \in [1,2)$, assuming $\gamma_{n}$ is sufficiently large.
   \end{remark}
   We now obtain a bound for the sequence of desired velocities $w_{n}$.
\begin{proposition} \label{unifw}
The desired velocity $w_{n}$ satisfies \begin{equation*}
         \|w_{n}\|_{L^{\infty}_{t}W^{1,\frac{4}{3}}_{x}}\le C.
    \end{equation*} 
\end{proposition}
\begin{proof}
Taking $r = \rho_{n}$ in the generalised Poincare inequality (see Proposition \ref{poincare} in the Appendix) gives us
\begin{align} \label{wnL1a}
    \|w_{n}\|_{L^{1}_{x}} \le C \left( \|\partial_{x}w_{n}\|_{L^{1}_{x}} + \int_{\mathbb{T}} \rho_{n}w_{n}~dx\right).
\end{align} Using Young's inequality and Proposition \ref{intermediatebounds}, \begin{equation*}
    \|\rho_{n}w_{n}\|_{L^{\infty}_{t}L^{1}_{x}} \le \|\sqrt{\rho_{n}}w_{n}\|_{L^{\infty}_{t}L^{2}_{x}} + \|\rho_{n}\|_{L^{\infty}_{t}L^{1}_{x}} \le C.\end{equation*} Then taking the supremum over time in \eqref{wnL1a} and using Proposition \ref{dxwuniform} gives us $\|w_{n}\|_{L^{\infty}_{t}L^{1}_{x}} \le C$. Combining this estimate with Proposition \ref{dxwuniform} leads to the result.

\end{proof} We now make note of some bounds which will be used to prove the remaining estimates in this section.
\begin{corollary} \label{intermediatebounds}
    Provided $\gamma_{n} \ge 1$ we have  \begin{align}
        &\|\sqrt{\rho_{n}}u_{n}\|_{L^{2}_{t,x}} + \|\rho_{n}\|_{L^{\infty}_{t}L^{2}_{x}} \le C,  \label{sqrtrhouL22} \\[1ex]
        &\|w_{n}\|_{L^{\infty}_{t,x}} +\|\rho_{n}w_{n}\|_{L^{2}_{t,x}} \le C,  \label{wLinf}
    \end{align}
\end{corollary}
\begin{proof}
    The first estimate in \eqref{sqrtrhouL22} follows from \eqref{energyunif} and the relationship $\sqrt{\rho_{n}}u_{n} = \sqrt{\rho_{n}}w_{n} - \sqrt{\rho_{n}}\partial_{x}p_{n}(\rho_{n})$, while the second was obtained from \eqref{Hnbound}. The $L^{\infty}_{t,x}$ bound on $w_{n}$ follows from the embedding $W^{1,1}(\mathbb{T})  \hookrightarrow L^{\infty}(\mathbb{T})$. For the final bound, we have $\|\rho_{n}w_{n}\|_{L^{2}_{t,x}} \le t \|w_{n}\|_{L^{\infty}_{t,x}} \|\rho_{n}\|_{L^{\infty}_{t}L^{2}_{x}} \le C$, thanks to \eqref{sqrtrhouL22}. 
\end{proof}
\subsection{An improved estimate for the potential}
We now obtain a uniform bound on the potential $\pi_{n}$ which is defined by the relation \eqref{pi1}. It will be useful to note that
\begin{equation}
    \pi_{n}(\rho_{n}) = \frac{\gamma_{n}}{\gamma_{n}+1}\rho_{n}^{\gamma_{n}+1} .\label{pi2}
\end{equation}
\begin{lemma} \label{dxpiunif}
The potential $\pi_{n}$ satisfies
\begin{equation} \label{dxpiunifeqn}
    \|\partial_{x}\pi_{n}\|_{L^{2}_{t,x}} \le C.
\end{equation}
\end{lemma}
\begin{proof}
    First note that the continuity equation can be expressed as
    \begin{equation} \label{continuitywn}
        \partial_{t}\rho_{n} + \partial_{x}(\rho_{n}w_{n}) = \partial_{x}^{2}\pi_{n}.
    \end{equation} Multiplying by $\pi_{n}$ and integrating over $[0,t] \times \mathbb{T}$ and by parts, we find
    \begin{align*}
      \frac{1}{\gamma_{n}+2} \int_{\mathbb{T}} - \rho_{n}^{\gamma_{n}+2}(x,t) + (\rho_{n}^{0})^{\gamma_{n}+2}(x)~dx   + \int_{0}^{t}\int_{\mathbb{T}} \rho_{n}w_{n} \partial_{x}\pi_{n} ~dxds =  \int_{0}^{t}\int_{\mathbb{T}} |\partial_{x}\pi_{n}|^{2}~dxds.
    \end{align*} Since $\rho_{n} > 0$ this reduces to
    \begin{equation}
        \int_{0}^{t}\int_{\mathbb{T}} |\partial_{x}\pi_{n}|^{2}~dxds \le \int_{0}^{t}\int_{\mathbb{T}} \rho_{n}w_{n} \partial_{x}\pi_{n} ~dxds + \frac{1}{\gamma_{n}+2} \int_{\mathbb{T}}(\rho_{n}^{0})^{\gamma_{n}+2}(x)~dx.
    \end{equation} Note that the assumption \eqref{thm2RHObounds} implies that the final term on the right hand-side is uniformly bounded in $n$. Additionally, using Young's inequality with the first term on the right hand-side leads to
    \begin{equation*}
        \frac{1}{2}\int_{0}^{t}\int_{\mathbb{T}} |\partial_{x}\pi_{n}|^{2}~dxds \le C\|\rho_{n}w_{n}\|_{L^{2}_{t,x}}^{2} + C.
    \end{equation*} The result follows from \eqref{wLinf}.
\end{proof}
\begin{corollary} \label{intermediatebounds2}
    We have \begin{align} \label{rhouunif}
        &\|\rho_{n}u_{n}\|_{L^{2}_{t,x}} \le C, \\[1ex] 
        &\|\sigma \sqrt{\rho_{n}}u_{n}\|_{L^{\infty}_{t}L^{2}_{x}} \le C, ~ \text{ where } \sigma(t) = \min(1,t). \label{sigmarhou}
    \end{align}
\end{corollary}
\begin{proof}
    Since $\rho_{n}u_{n} = \rho_{n}w_{n} - \partial_{x}\pi_{n}$, \eqref{rhouunif} follows from \eqref{wLinf} and \eqref{dxpiunifeqn}. To acquire \eqref{sigmarhou}, we multiply the momentum equation \eqref{a2} by $\sigma u_{n}$. Integrating by parts in space and time leads to
    \begin{align*}
        \int_{\mathbb{T}} \sigma \rho_{n}u_{n}^{2}(t,x) - \sigma \rho_{n}u_{n}^{2}(t,0)~dx - \int_{0}^{t} \int_{\mathbb{T}} \mathbbm{1}_{[0,1]}(s)\rho_{n}u_{n}^{2}~dxds &+ \int_{0}^{t} \int_{\mathbb{T}} \sigma \lambda_{n}(\rho_{n}(\partial_{x}u_{n})^{2}~dxds \\[1ex]
         &= \int_{0}^{t} \int_{\mathbb{T}} \sigma \rho_{n}u_{n} ( \partial_{t}u_{n} + u_{n}\partial_{x}u_{n})~dxds.
    \end{align*} The term on the right hand-side disappears once we integrate by parts and use the continuity equation. Also notice that $\sigma(0) = 0$. Therefore we have \begin{equation*}
         \int_{\mathbb{T}} \sigma \rho_{n}u_{n}^{2}(t,x)~dx \le \frac{1}{2}\int_{0}^{t} \int_{\mathbb{T}} \mathbbm{1}_{[0,1]}(s)\rho_{n}u_{n}^{2}~dxds \le \|\sqrt{\rho_{n}}u_{n}\|_{L^{2}_{t,x}} \le C,
    \end{equation*}thanks to \eqref{sqrtrhouL22}.
\end{proof}
\begin{lemma} \label{piunif}
    Under the assumptions \eqref{thm2RHObounds}- \eqref{thm2wn}, the potential $\pi_{n}$ satisfies
    \begin{equation} \label{unifpi}
        \|\pi_{n}\|_{L^{1}_{t,x}} \le C.
    \end{equation}
\end{lemma}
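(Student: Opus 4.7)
The proof breaks into two pieces. The gradient bound $\|\partial_x \pi_n\|_{L^\infty_t L^2_x} \le C$ follows immediately from the identity $\pi_n'(\rho) = \rho p_n'(\rho)$: we rewrite
\begin{equation*}
\partial_x \pi_n(\rho_n) \;=\; \rho_n \partial_x p_n(\rho_n) \;=\; \sqrt{\rho_n}\cdot\bigl(\sqrt{\rho_n}\,\partial_x p_n(\rho_n)\bigr),
\end{equation*}
and then invoke the uniform upper bound $\rho_n \le \overline{\rho}_n \le C$ from Proposition \ref{rhounif} together with the estimate \eqref{ub1}, which under the hypotheses of Theorem \ref{limitexistence} gives $\|\sqrt{\rho_n}\,\partial_x p_n(\rho_n)\|_{L^\infty_t L^2_x}^{2} \le (T+3) E_n$ with $E_n$ uniformly bounded in $n$ thanks to Proposition \ref{uniformenergy} and the assumption $\rho_n^0 \le 1 + 1/\gamma_n$ (which, combined with the bound on $\langle\rho_n^0\rangle$, controls $\|H_n(\rho_n^0)\|_{L^1_x}$).

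For the $L^1$-in-space bound, rather than passing through the test-function/Bogovskii construction alluded to in the introduction, I would give a short direct pointwise argument. The previous step together with the fundamental theorem of calculus on $\mathbb{T}$ yields
\begin{equation*}
|\pi_n(x,t)-\pi_n(x',t)| \;\le\; |\mathbb{T}|^{1/2}\,\|\partial_x\pi_n(t)\|_{L^2_x} \;\le\; C \qquad \forall\,x,x'\in\mathbb{T},
\end{equation*}
so it suffices to produce, at each time $t$, a single point $x_0(t)\in\mathbb{T}$ where $\pi_n(x_0(t),t)$ is uniformly bounded. Here conservation of mass (Lemma \ref{consofmass}) and assumption \eqref{thm2A4} come in: $\langle \rho_n(t)\rangle = \langle \rho_n^0\rangle \le \hat\rho < 1$ for all $t\in[0,T]$. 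Since $\rho_n(\cdot,t) \in H^1(\mathbb{T}) \subset C(\mathbb{T})$ for each fixed $n$, the intermediate value theorem furnishes an $x_0(t)\in\mathbb{T}$ with $\rho_n(x_0(t),t) = \langle \rho_n^0\rangle \le \hat\rho$. Monotonicity of $\rho\mapsto \pi_n(\rho)=\tfrac{\gamma_n}{\gamma_n+1}\rho^{\gamma_n+1}$ on $[0,\infty)$ then gives
\begin{equation*}
\pi_n(x_0(t),t) \;\le\; \tfrac{\gamma_n}{\gamma_n+1}\,\hat\rho^{\,\gamma_n+1} \;\le\; \hat\rho \;<\; 1,
\end{equation*}
uniformly in $t$ and $n$, and combining this with the oscillation estimate above yields $\|\pi_n\|_{L^\infty_{t,x}}\le C$ and hence $\|\pi_n\|_{L^\infty_t L^1_x}\le C$.

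The decisive ingredient is the strict inequality $\hat\rho < 1$ in \eqref{thm2A4}, which is precisely what makes $\hat\rho^{\gamma_n+1}$ uniformly bounded as $n\to\infty$ and lets us find a reference point where $\pi_n$ is small. Without it, one would have to run the test-function argument sketched in the introduction: adapting the Bogovskii-type test function from \cite{HCL} and pairing it with the $w$-formulation of the momentum equation rather than the $u$-formulation, so that the temporal trace arising after integration by parts becomes $[\rho_n w_n \phi]_{t=0}^{T}$ instead of $[\rho_n u_n \phi]_{t=0}^{T}$, the former being controlled thanks to the uniform bound $\|w_n\|_{L^\infty_t H^1_x}\le C$ from Proposition \ref{unifw} and the hypothesis \eqref{thm2A1}, whereas the latter cannot be controlled uniformly in $n$ under our hypotheses.
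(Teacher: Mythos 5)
Your proposal is correct, and while the gradient estimate $\|\partial_x\pi_n\|_{L^\infty_t L^2_x}\le C$ is obtained exactly as in the paper (writing $\partial_x\pi_n=\rho_n\partial_x p_n(\rho_n)=\sqrt{\rho_n}\,(\sqrt{\rho_n}w_n-\sqrt{\rho_n}u_n)$ and invoking the energy bounds plus $\rho_n\le\overline{\rho}_n\le C$), your treatment of the $L^1$ bound is genuinely different from — and considerably shorter than — the paper's. The paper constructs the test function $\Psi_n(x,t)=\int_0^x(\rho_n^0-\langle\rho_n\rangle)\,dy-\int_0^t\rho_nu_n\,ds$, pairs it with the momentum equation in $u$-form to get $\bigl|\int_0^t\int_{\mathbb{T}}(\rho_n-\langle\rho_n\rangle)\lambda_n(\rho_n)\partial_xu_n\bigr|\le C$, splits the domain at $S_m=(1+\langle\rho_n\rangle)/2$ to extract $\bigl|\int_0^t\int_{\mathbb{T}}\lambda_n(\rho_n)\partial_xu_n\bigr|\le C$, and finally feeds this into the renormalized equation $\partial_t(\rho_np_n)+\partial_x(\rho_np_nu_n)=-\lambda_n(\rho_n)\partial_xu_n$ to bound $\int_{\mathbb{T}}\pi_n(t)$. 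Your argument bypasses all of this: the already-established gradient bound controls the oscillation of $\pi_n(\cdot,t)$ via the fundamental theorem of calculus, conservation of mass plus continuity of $\rho_n(\cdot,t)$ produces a point where $\rho_n=\langle\rho_n^0\rangle\le\hat\rho<1$ and hence $\pi_n\le\hat\rho^{\gamma_n+1}\le\hat\rho$, and the two together give the stronger conclusion $\|\pi_n\|_{L^\infty_{t,x}}\le C$ (which the paper anyway needs later, and currently extracts via $W^{1,1}\hookrightarrow L^\infty$). Both routes hinge on $\hat\rho<1$, used in the paper to bound $\rho_n-\langle\rho_n\rangle$ from below on the high-density set and in your argument to tame $\hat\rho^{\gamma_n+1}$. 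What the paper's heavier machinery buys is robustness: your oscillation argument is intrinsically one-dimensional, whereas the Bogovskii-type construction is the version that survives in several space dimensions, and it yields the auxiliary bound on $\int_0^t\int_{\mathbb{T}}\lambda_n(\rho_n)\partial_xu_n$ as a by-product. One small correction to your closing remark: the paper does run the test-function argument in the $u$-formulation, and the temporal trace $\int_{\mathbb{T}}\Psi_n\rho_nu_n\,dx$ is controlled there because $\|\Psi_n\|_{L^\infty_{t,x}}\le C$ and $\|\rho_nu_n\|_{L^\infty_tL^1_x}\le C$; the obstruction mentioned in the introduction concerns reusing the specific test function of \cite{HCL}, not the choice of formulation.
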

\begin{proof} We begin by defining the function $\psi_{n} : \mathbb{T} \times [0,T] \to \mathbb{R}$ given by
    \begin{equation}
        \psi_{n}(x,t) =  \int_{0}^{x} \left( \rho_{n}^{0}(y) - \langle \rho_{n} \rangle \right)~dy - \int_{0}^{t} \rho_{n}u_{n} (x,s)~ds,
    \end{equation} where $\langle \rho_{n} \rangle := |\mathbb{T}|^{-1}\int_{\mathbb{T}} \rho_{n}(t,y)~dy$.
    Assuming that $\mathbb{T} = [0,1]$, we can verify some key properties of $\psi_{n}$. Firstly, we can show that $\psi_{n}$ is periodic in space. Indeed, we have
      $ \displaystyle \psi_{n}(0,t) = - \int_{0}^{t} \rho_{n}u_{n} (0,s)~ds$  while \begin{align*}
            \psi_{n}(1,t) &= \int_{\mathbb{T}} \rho_{n}^{0}(y) - \langle \rho_{n} \rangle~dy - \int_{0}^{t} \rho_{n}u_{n} (1,s)~ds = - \int_{0}^{t} \rho_{n}u_{n} (1,s)~ds \\[1ex] &= - \int_{0}^{t} \rho_{n}u_{n} (0,s)~ds = \psi_{n}(0,t).
        \end{align*}
        Next, we can directly compute $\partial_{x}\psi_{n}$ to find
        \begin{align*}
            \partial_{x}\psi_{n} &= \rho_{n}^{0}(x) - \langle \rho_{n} \rangle - \int_{0}^{t} \partial_{x} (\rho_{n}u_{n})(x,s)~ds  \\[1ex] &= \rho_{n}^{0}(x) - \langle \rho_{n} \rangle + \rho_{n}(x,t) - \rho_{n}(x,0) = \rho_{n} - \langle \rho_{n} \rangle,
        \end{align*} where we have made use of the continuity equation. Then by virtue of \eqref{sqrtrhouL22} we have  $\|\partial_{x}\psi_{n}\|_{L^{\infty}_{t}L^{2}_{x}} \le C$. In order to obtain a bound on $\psi_{n}$, we note that $\| \rho_{n}^{0} - \langle \rho_{n} \rangle \|_{L^{\infty}_{t,x}} \le C$ from assumption \eqref{thm2RHObounds} and so
        \begin{align*}
            \|\psi_{n}(t)\|_{L^{2}_{x}} &\le C + \int_{\mathbb{T}} \left( \int_{0}^{t} \rho_{n}u_{n}(x,s)~ds \right)^{2}~dx \\[1ex] &\le C + \|\rho_{n}u_{n}\|_{L^{2}_{t,x}}^{2} \le C,
        \end{align*} due to Jensen's inequality and \eqref{rhouunif}. Thus we also have $\|\psi_{n}\|_{L^{\infty}_{t}L^{2}_{x}} \le C$. From the Sobolev inequality \eqref{Sinf} we can now deduce that $\|\psi_{n}\|_{L^{\infty}_{t,x}} \le C$. We also note that $\partial_{t}\psi_{n} = - \rho_{n}u_{n}$ since $\langle \rho_{n} \rangle$ is constant as a function of time (and space). Therefore from \eqref{rhouunif} we have $\|\partial_{t}\psi_{n}\|_{L^{2}_{t,x}} \le C$. Next, we define the test function $\zeta_{n} : \mathbb{T} \times [0,T] \to \mathbb{R}$ as 
        \begin{equation}
            \zeta_{n}(x,t) := \int_{0}^{x} \left(\psi_{n}(y,t) - \frac{1}{|\mathbb{T}|}\int_{\mathbb{T}} \psi_{n}(z,t)~dz \right)dy.
        \end{equation} Note that $\zeta_{n}$ is periodic in space. Using the properties of $\psi_{n}$ it is also straightforward to verify that $\|\zeta_{n}\|_{L^{\infty}_{t,x}} + \|\partial_{x}\zeta_{n}\|_{L^{\infty}_{t,x}} \le C$. We now obtain a bound on $\partial_{t}\zeta_{n}$. Note that
        \begin{align*}
            \partial_{t}\zeta_{n}(x,t) = \int_{0}^{x}\left(-\rho_{n}u_{n}(y,t) + |\mathbb{T}|^{-1}\int_{\mathbb{T}}\rho_{n}u_{n}(z,t)~dz\right)dy
        \end{align*} and so $|\partial_{t}\zeta_{n}|^{2} \le 2 \|\rho_{n}u_{n}(t)\|_{L^{2}_{x}}^{2} \le C$
         thanks to Jensen's inequality and  \eqref{rhouunif}. Integrating over space and time in this inequality gives $\|\partial_{t}\zeta_{n}\|_{L^{2}_{t,x}} \le C$. Note that $\partial_{x}^{2}\zeta_{n} = \rho_{n} - \langle \rho_{n} \rangle$ by construction. Multiplying the continuity equation \eqref{continuitywn} by $\zeta_{n}$ and integrating by parts in space and time, we therefore have
         \begin{align*}
           -  \int_{0}^{t}\int_{\mathbb{T}} \rho_{n} \partial_{t}\zeta_{n} ~dxds + \int_{\mathbb{T}} \zeta_{n}\rho_{n}(x,t) &- \zeta_{n}\rho_{n}(x,0)~dx - \int_{0}^{t}\int_{\mathbb{T}} \partial_{x}\zeta_{n} \rho_{n}w_{n}~dxds \\[1ex] &= \int_{0}^{t}\int_{\mathbb{T}} (\rho_{n} - \langle \rho_{n} \rangle ) \pi_{n}~dxds.
         \end{align*} The absolute value of the left side of the above equation can be bounded independently of $n$ due to the properties of $\zeta_{n}$ already established as well as Corollary \ref{intermediatebounds}. Therefore we find
         \begin{equation} \label{pimean}
            \left| \int_{0}^{t}\int_{\mathbb{T}} (\rho_{n} - \langle \rho_{n} \rangle ) \pi_{n}~dxds \right| \le C.
         \end{equation}  We now wish to extract a bound on the quantity $\|\pi_{n}\|_{L^{1}_{t,x}}$. To this end, we define \begin{equation*}
     S_{m} := \frac{1 + \langle \rho_{n} \rangle }{2}
 \end{equation*} and consider the regions $\left\{ \rho \le S_{m} \right\}$ and $\left\{ \rho > S_{m} \right\}$ separately. Firstly, we have that
 \begin{equation} \label{pi8}
     \left|  \int_{0}^{t}\int_{\mathbb{T}} \pi_{n}(x,t) \mathbbm{1}_{ \left\{ \rho \le S_{m} \right\} } ~dx ds\right| \le C
 \end{equation} by the observation that when $ \rho_{n} < S_{m} < 1$, we have $\pi_{n}(\rho_{n}) = \frac{\gamma_{n}}{\gamma_{n}+1}\rho_{n}^{\gamma_{n}+1} \to 0$ as $n \to \infty$.  This in particular implies that
 \begin{align*}
     & \left| \int_{0}^{t}\int_{\mathbb{T}} (\rho_{n} - \langle \rho_{n} \rangle ) \pi_{n}(x,t)  \mathbbm{1}_{ \left\{ \rho > S_{m} \right\}  }~dxds \right| \\[1ex] &\hspace{0.5cm}\le  \left| \int_{0}^{t}\int_{\mathbb{T}} (\rho_{n} - \langle \rho_{n} \rangle ) \pi_{n}(x,t) \mathbbm{1}_{ \left\{ \rho \le S_{m} \right\} }~dxds \right| +  \left|\int_{0}^{t}\int_{\mathbb{T}} (\rho_{n} - \langle \rho_{n} \rangle ) \pi_{n}(x,t)~dxds \right| \le C,
  \end{align*} due to \eqref{pi8} and \eqref{pimean}. When $\rho_{n} \ge S_{m}$, the assumption $\langle \rho_{n} \rangle \le \hat{\rho} < 1$ implies that $\rho_{n} - \langle \rho_{n} \rangle \ge \frac{1 - \langle \rho_{n} \rangle }{2} \ge \frac{1 - \hat{\rho} }{2} > 0$. Thus, we have
  \begin{equation*}
      C \ge \left| \int_{0}^{t}\int_{\mathbb{T}} (\rho_{n} - \langle \rho_{n} \rangle ) \pi_{n}(x,t) \mathbbm{1}_{\left\{ \rho > S_{m} \right\}}~dxds \right| \ge \frac{1 - \hat{\rho}}{2} \left|  \int_{0}^{t}\int_{\mathbb{T}} \pi_{n}(x,t) \mathbbm{1}_{\left\{ \rho > S_{m} \right\}}~dxds \right|,
  \end{equation*} and so we conclude from this and \eqref{pi8} that for any $t \in [0,T]$,
  \begin{equation} \label{lambdadxu}
    \|\pi_{n}\|_{L^{1}_{t,x}} =   \int_{0}^{t}\int_{\mathbb{T}}  \pi_{n}(x,t) ~dxds  \le C.
  \end{equation}
\end{proof} We can improve this bound via a Hoff-type estimate \cite{hoff_global_1998, hoff_discts_1997}, which also takes inspiration from \cite{Burtea_2020}.
\begin{corollary} For any $\tau \in (0,T)$, we have
   \begin{equation}
        \|\pi_{n}\|_{L^{\infty}([\tau, T];L^{1}(\mathbb{T}))} \le C.
    \end{equation}
\end{corollary}
\begin{proof}
   We multiply the momentum equation \eqref{a2} by $\sigma(t)\psi_{n}$, where $\sigma(t) := \min(1,t)$. Integrating by parts in space and time and simplifying,
   \begin{align}
   \begin{aligned}
       - \int_{0}^{t}\int_{\mathbb{T}}  \mathbbm{1}_{[0,1]}(s)\psi_{n} \rho_{n}u_{n}~dxds + \int_{\mathbb{T}}\sigma(t)\psi_{n}\rho_{n}u_{n}(x,t)~dx + \langle \rho_{n} \rangle \int_{0}^{t}\int_{\mathbb{T}} \sigma(s) \rho_{n}u_{n}^{2}~dxds \\[1ex] = - \int_{0}^{t}\int_{\mathbb{T}} \sigma(s)(\rho_{n} - \langle \rho_{n} \rangle ) \lambda_{n}(\rho_{n})\partial_{x}u_{n}~dxds.
        \end{aligned}
   \end{align}
   Using Corollaries \ref{intermediatebounds}, \ref{intermediatebounds2} and the regularity of $\psi_{n}$ we obtain \begin{align} \label{pisigmalambda}
       \left| \int_{0}^{t}\int_{\mathbb{T}} \sigma(s)(\rho_{n} - \langle \rho_{n} \rangle ) \lambda_{n}(\rho_{n})\partial_{x}u_{n}~dxds \right| \le C.
   \end{align}  Next, we derive the evolution equation for $(\rho_{n} - \langle \rho_{n} \rangle )\pi_{n}$ which reads
    \begin{equation} \label{RCE}
         \partial_{t} \left[(\rho_{n} - \langle \rho_{n} \rangle )\pi_{n}\right] + \partial_{x} \left[(\rho_{n} - \langle \rho_{n} \rangle ) \pi_{n} u_{n} \right] +  \left[ \rho_{n}(\rho_{n} - \langle \rho_{n} \rangle )\pi_{n}' + \langle \rho_{n} \rangle \pi_{n}  \right]\partial_{x} u_{n} = 0. 
    \end{equation} Multiplying by $\sigma(\cdot)$ and integrating over space and time,
    \begin{align}
    \begin{aligned} \label{sigmaRCE}
\int_{0}^{t}\int_{\mathbb{T}}\sigma(s)\partial_{s}\left[(\rho_{n} - \langle \rho_{n} \rangle )\pi_{n}\right]~dxds = &-\int_{0}^{t} \int_{\mathbb{T}} \sigma(s) (\rho_{n} - \langle \rho_{n} \rangle) \lambda_{n}(\rho_{n}) \partial_{x}u_{n}~dxds \\[1ex] &- \langle \rho_{n} \rangle \int_{0}^{t} \int_{\mathbb{T}} \pi_{n} \partial_{x} u_{n}~dxds.
    \end{aligned}
    \end{align}
    The first term on the RHS is bounded independently of $n$ thanks to \eqref{pisigmalambda}. To bound the final term, we can integrate by parts to see that
    \begin{align*}
        \int_{0}^{t}\int_{\mathbb{T}} \pi_{n}\partial_{x}u_{n}~dxds = - \int_{0}^{t}\int_{\mathbb{T}} u_{n} \partial_{x}\pi_{n}~dxds = -\int_{0}^{t}\int_{\mathbb{T}} u_{n} (\rho_{n}w_{n} - \rho_{n}u_{n})~dxds \le C,
    \end{align*} thanks to Corollaries \ref{intermediatebounds} and \ref{intermediatebounds2}. Therefore returning to \eqref{sigmaRCE} we find
    \begin{equation*}
\int_{0}^{t}\int_{\mathbb{T}}\sigma(s)\partial_{s}\left[(\rho_{n} - \langle \rho_{n} \rangle )\pi_{n}\right]~dxds \le C.
    \end{equation*} Integrating by parts,
    \begin{align*}
        \sigma(t)\int_{\mathbb{T}} (\rho_{n} - \langle \rho_{n} \rangle ) \pi_{n}(x,t)~dx \le C + \int_{0}^{t}\int_{\mathbb{T}} \mathbbm{1}_{[0,1]}(s) ( \rho_{n} - \langle \rho_{n} \rangle ) \pi_{n}(x,s)~dxds.
    \end{align*} Recalling the bound \eqref{pimean} we have for all $t \in [0,T]$
    \begin{equation*}
        \sigma(t)\int_{\mathbb{T}} (\rho_{n} - \langle \rho_{n} \rangle ) \pi_{n}(x,t)~dx \le C.
    \end{equation*} Considering the regions $\left\{ \rho \le S_{m} \right\}$ and $\left\{ \rho > S_{m} \right\}$ separately and repeating the same argument which was seen in the proof of Lemma \ref{piunif}, we get \begin{equation} \label{sigmapiLINF}
        \sigma(t)\int_{\mathbb{T}} \pi_{n}(x,t)~dx \le C,
    \end{equation} from which the bound follows.
\end{proof}
\section{The limit passage}
The aim of this section is to complete the proof of Theorem \ref{limitexistence}.
\begin{proof}[Proof of Theorem \ref{limitexistence}]
For $n$ fixed, we have proved the existence of a unique regular solution $(\rho_{n}, w_{n})$ to 
the system \eqref{limsystem1}-\eqref{limsystem2} on $\mathbb{T} \times [0,T]$ with $w_{n} = u_{n} + \partial_{x}p_{n}$. Multiplying \eqref{limsystem1}-\eqref{limsystem2} by $\phi \in C^{\infty}_{c}([0,T] \times \mathbb{T})$ and integrating by parts, an integral formulation for the above system is given by
 \begin{subequations} 
\newcommand{\mystrut}{\vphantom{\pder{}{}}}
\begin{numcases}{}  \label{HCM-WF1n}
   -  \int_{0}^{t} \int_{\mathbb{T}} \rho_{n} \partial_{t} \phi ~dxds + \int_{\mathbb{T}} \rho_{n}\phi(x,t) - \rho_{n}^{0}\phi(x,0)~dx + \int_{0}^{t} \int_{\mathbb{T}} \partial_{x}\pi_{n} \partial_{x} \phi ~dxds= \int_{0}^{t} \int_{\mathbb{T}} \rho_{n} w_{n} \partial_{x} \phi ~dxds, \\[1ex]
   -  \int_{0}^{t} \int_{\mathbb{T}} \rho_{n} w_{n} \partial_{t}\phi ~dxds + \int_{\mathbb{T}}\rho_{n} w_{n} \phi(x,t) - \rho_{n}^{0} w_{n}^{0}\phi (x,0)~dx +  \int_{0}^{t} \int_{\mathbb{T}} (w_{n} \partial_{x} \pi_{n} - \rho_{n} w_{n}^{2})\partial_{x}\phi ~dxds =  0. \label{HCM-WF2n}
\end{numcases}
\end{subequations}
 The estimates obtained in the previous section imply that there exists a triple $(\rho, w, \pi)$ such that, up to the extraction of a subsequence, we have
\begin{align}                   \label{RHOW}
    &\rho_{n} \rightharpoonup^{*} \rho \text{ weakly-* in }L^{\infty}(0,T; L^{2}(\mathbb{T})), \\[1ex]        \label{WWSTAR}
    &w_{n} \rightharpoonup^{*} w \text{ weakly-* in } L^{\infty}((0,T) \times \mathbb{T}), \\[1ex] \label{PIWSTAR}
    &\pi_{n} \rightharpoonup^{*} \pi \text{ weakly-* in } \mathcal{M}((0,T) \times \mathbb{T}), \\[1ex]
    \label{DXPIWSTAR}
    &\partial_{x}\pi_{n} \rightharpoonup^{*} \partial_{x} \pi \text{ weakly in } L^{2}(0,T; L^{2}(\mathbb{T})),     
\end{align} Thanks to the estimate \eqref{sigmapiLINF}, the bound $\|\partial_{x}(\sigma \pi_{n})\|_{L^{2}_{t,x}} \le C$ and the embedding $W^{1,2}(\mathbb{T}) \hookrightarrow L^{\infty}(\mathbb{T})$, we also have
\begin{equation} \label{SIGMAPI}
\sigma\pi_{n} \rightharpoonup \sigma\pi \text{ weakly-* in } L^{2}(0,T; L^{\infty}(\mathbb{T})), 
\end{equation}where again $\sigma(t) = \min(1,t)$.
The key tool we will need in order to complete the limit passage is the strong convergence of $w_{n}$. Since $\|u_{n}\partial_{x}w_{n}\|_{L^{1}_{t,x}} \le \|\sqrt{\rho_{n}}u_{n}\|_{L^{2}_{t,x}}\| \rho_{n}^{-1/2}\partial_{x}w_{n}\|_{L^{2}_{t,x}} \le C$, the momentum equation $\partial_{t}w_{n} + u_{n}\partial_{x}w_{n}=0$ implies that $
    \|\partial_{t}w_{n}\|_{L^{1}_{t,x}} \le C.$
 This observation along with the chain of embeddings $W^{1,\frac{4}{3}} \hookrightarrow \hookrightarrow  L^{\infty} \hookrightarrow L^{1}$ allows us to use the Aubin-Lions lemma to deduce that
\begin{equation} \label{strongw}
    w_{n} \to w \text{ strongly in } L^{\infty}((0,T) \times \mathbb{T}).
\end{equation} Consequently, the following convergences hold:
\begin{align}
    \rho_{n}w_{n} &\rightharpoonup^{*} \rho w \text{ weakly-* in }L^{\infty}(0,T; L^{2}(\mathbb{T})),  \label{rhow} \\[1ex]
    \rho_{n}w_{n}^{2} &\rightharpoonup^{*} \rho w^{2} \text{ weakly-* in }L^{\infty}(0,T; L^{2}(\mathbb{T})), \label{rhow2}
\end{align} thanks to \eqref{RHOW}. Due to the hypotheses \eqref{idconvergence1} and \eqref{idconvergence2}, we have $\rho_{n}^{0} \rightharpoonup \rho^{0}$ and $\rho_{n}^{0}w_{n}^{0} \rightharpoonup \rho^{0}w^{0}$ weakly in $L^{1}(\mathbb{T})$. Therefore as $n \to \infty$ we have for all $t \in [0,T]$,
\begin{align} \label{WFconv1b}
    \int_{\mathbb{T}} \rho_{n}\phi(x,t) - \rho_{n}^{0}\phi(x,0)~dx &\to \int_{\mathbb{T}} \rho \phi(x,t) - \rho^{0}\phi(x,0)~dx, \\[1ex]
    \int_{\mathbb{T}} \rho_{n}w_{n}\phi(x,t) - \rho_{n}^{0}w_{n}^{0}\phi(x,0)~dx &\to \int_{\mathbb{T}} \rho w \phi(x,t) - \rho^{0}w^{0}\phi(x,0)~dx. \label{WFconv2b}
\end{align} The convergences \eqref{RHOW}-\eqref{WFconv2b} are sufficient to pass to the limit in \eqref{HCM-WF1n} and \eqref{HCM-WF2n} to obtain \eqref{HCM-WF1} and \eqref{HCM-WF2} respectively. It remains to verify \eqref{HCM-WF3}. Due to the assumption \eqref{thm2RHObounds} it is easy to see that the limit $\rho$ satisfies $0 \le \rho$. To show $\rho \le 1$, we proceed in a similar way to that which can be seen in Theorem 4.1. of \cite{Lions1999}.
\begin{proposition}
    The density $\rho$ satisfies $0 \le \rho \le 1$.
\end{proposition}\begin{proof}
    For $1 < p < \infty$ such that $\gamma_{n} > p$, the embedding of $L^{p}$ spaces implies that
\begin{equation*}
    \|\rho_{n}\|_{L^{\infty}_{t}L^{p}_{x}} \le \|\rho_{n}\|_{L^{\infty}_{t}L^{1}_{x}}^{\frac{a_{n}}{p}} \|\rho_{n}\|_{L^{\infty}_{t}L^{\gamma_{n}+1}_{x}}^{\frac{1-a_{n}}{p}}
\end{equation*} where $a_{n} = \frac{p-\gamma_{n}}{1-\gamma_{n}} \to 1$ as $n \to \infty$. We now recall the bound
\begin{align*}
   \|\rho_{n}\|_{L^{\infty}_{t}L^{\gamma_{n}+1}_{x}} \le \left(C(\gamma_{n}+1)\right)^{\frac{1}{\gamma_{n}+1}}
\end{align*} which follows from \eqref{Hnbound}. Thus, we have
\begin{equation*}
    \|\rho_{n}\|_{L^{\infty}_{t}L^{p}_{x}} \le C^{\frac{a_{n}}{p}} (C(\gamma_{n}+1))^{\frac{1-a_{n}}{\gamma_{n}+1}} =: C^{\frac{a_{n}}{p}} \tilde{C}_{n},
\end{equation*} and it is easy to see that $\tilde{C}_{n} \to 1$ as $n \to \infty$. Using Fatou's lemma,
\begin{equation*}
      \|\rho\|_{L^{\infty}_{t}L^{p}_{x}} \le \liminf_{n \to \infty} \|\rho_{n}\|_{L^{\infty}_{t}L^{p}_{x}} \le C^{\frac{1}{p}}.
\end{equation*} Taking $p \to \infty$ in this inequality rewards us with
\begin{equation*} \vspace{-0.3cm}
    \|\rho\|_{L^{\infty}_{t,x}} \le \liminf_{p \to \infty} \|\rho\|_{L^{\infty}_{t}L^{p}_{x}} \le 1.
\end{equation*}\end{proof} Next, we estimate $\partial_{t}\rho_{n}$ and adopt the notation for the duality bracket $\langle \cdot, \cdot \rangle_{*} := \langle \cdot, \cdot \rangle_{(W^{1,2}_{0})^{*} \times W^{1,2}_{0}}$. For an arbitrary $f \in W^{1,2}_{0}(\mathbb{T})$, we have
\begin{align*}
    \langle\partial_{t} \rho_{n}(t), f \rangle_{*} &= \langle - \partial_{x} (\rho_{n}(t) u_{n}(t)), f \rangle_{*} = (\rho_{n}(t) u_{n}(t), \partial_{x}f)_{L^{2}_{x}} \\[1ex] &\le \|\rho_{n}u_{n}(t)\|_{L^{2}_{x}} \|f\|_{W^{1, 2}_{0}}.
\end{align*} It follows from this that
\begin{equation} \label{rhonegativesobolev}
    \|\partial_{t} \rho_{n}\|_{L^{2}_{t}W^{-1,2}_{x}} \le \|\rho_{n}u_{n}\|_{L^{2}_{t,x}} \le C.
\end{equation} We also know $\|\partial_{x}(\sigma \pi_{n})\|_{L^{2}_{t,x}} \le C$ and that $\sigma \pi_{n}, \rho_{n}$ converge weakly in $L^{2}_{t,x}$ to $\sigma \pi, \rho$ respectively. We can therefore invoke a standard compensated compactness argument (see Lemma 5.1 of \cite{mathlions}) to get 
\begin{equation} \label{1-rhodistr}
    (1-\rho_{n})\sigma\pi_{n} \longrightarrow (1-\rho)\sigma\pi, ~ \text{ in } \mathcal{D}'((0,T) \times \mathbb{T}).
\end{equation}
On the other hand, we have
\begin{align*}
    (1-\rho_{n})\sigma\pi_{n} &= \frac{\gamma_{n}}{\gamma_{n}+1}(1-\rho_{n}) \sigma\rho_{n}^{\gamma_{n}+1} \\[1ex]
    &= \frac{\gamma_{n}}{\gamma_{n}+1} \left((1-\rho_{n}) \sigma \rho_{n}^{\gamma_{n}+1} \mathbbm{1}_{\left\{ (x,t) ~ : ~ 0 < \rho_{n}(t,x) \le 1  \right\}} + (1-\rho_{n}) \sigma \rho_{n}^{\gamma_{n}+1} \mathbbm{1}_{\left\{ (x,t) ~ : ~  \rho_{n}(t,x) > 1  \right\}} \right) \\[1ex] &=: A_{n} + B_{n}.
\end{align*}
It is straightforward to see that $A_{n} \to 0$ a.e. as $n \to \infty$. Thanks to Lemma \ref{piunif}, we can justify that
\begin{align*}
    \|B_{n}\|_{L^{1}_{t,x}} &= \int_{0}^{T} \int_{\mathbb{T}} |(1-\rho_{n})\sigma\pi_{n}| \mathbbm{1}_{\left\{ (x,t) ~ : ~  \rho_{n}(t,x) > 1  \right\}} ~dxds \\[1ex] &\le \|1-\rho_{n}\|_{L^{\infty}_{t}L^{1}_{x}}\|\pi_{n}\|_{L^{1}_{t}L^{\infty}_{x}} \cdot \mu\left( \left\{ \rho_{n} > 1 \right\} \right) \\[1ex] &\le C \cdot   \mu\left( \left\{ \rho_{n} > 1 \right\} \right) \longrightarrow 0,
\end{align*} as $n \to \infty$, where $\mu$ represents the Lebesgue measure. Note that $\|\pi_{n}\|_{L^{1}_{t}L^{\infty}_{x}} \le C$ follows from Lemmas \ref{dxpiunif}, \ref{piunif} and the embedding $W^{1,1}(\mathbb{T}) \hookrightarrow L^{\infty}(\mathbb{T})$. Therefore we have that
\begin{equation} 
    (1-\rho_{n})\sigma\pi_{n} \to 0 \text{ strongly in } L^{1}(0,T; L^{1}(\mathbb{T})).
\end{equation} Together with \eqref{1-rhodistr}, this allows us to deduce that
\begin{equation}
    (1-\rho)\sigma\pi = 0
 \text{ a.e. in } \mathbb{T} \times [0,T]. \end{equation} Since $\sigma(t) = \min(1,t)$ vanishes only on a set of measure zero $(\text{i.e. on } \{t=0\} \times \mathbb{T})$, we in fact have \begin{equation}
    (1-\rho)\pi = 0
 \text{ a.e. in } \mathbb{T} \times [0,T]. \end{equation} 
To conclude the proof of Theorem \ref{limitexistence}, we need to verify that our solution possesses the regularity specified in \eqref{solnrhoreg}-\eqref{solnpireg}. We now check that for any $\phi \in \mathcal{D}(\mathbb{T})$, the function
\begin{align*}
    t \mapsto \int_{\mathbb{T}} \rho(x,t) \phi(x)~dx
\end{align*}
is absolutely continuous on $[0,T]$. From \eqref{rhonegativesobolev} and the weak formulation of the continuity equation we have
\begin{align*}
    \frac{d}{dt} \int_{\mathbb{T}} \rho\phi~dx =  \int_{\mathbb{T}} \rho u\phi'~dx \in L^{1}(0,T).
\end{align*}
Therefore we are able to say that
\begin{align*}
    &\rho \in C_{weak}(0,T; L^{p}(\mathbb{T}))
\end{align*} for $p \in [1, \infty)$. We finally observe that since $\| \partial_{t}w_{n}\|_{L^{1}_{t,x}} + \|w_{n}\|_{L^{\infty}_{t}W^{1,\frac{4}{3}}_{x}} \le C$ we indeed have $w \in C(0,T; W^{1,\frac{4}{3}}(\mathbb{T})) \hookrightarrow C([0,T] \times \mathbb{T})$. This completes the proof. \end{proof}
\begin{appendices}
\section{Completing the blow-up lemma} 
In Section 2, we showed that
  \begin{equation} \label{buc}
    \sup_{t \in [0,T^{*})} \|\rho_{n}\|_{L^{\infty}(0,t; H^{2})} + \sup_{t \in [0,T^{*})} \|u_{n}\|_{L^{\infty}(0,t; H^{2})} + \sup_{t \in [0,T^{*})} \|u_{n}\|_{L^{2}(0,t; H^{3})} < + \infty.
\end{equation} In other words, we completed the base step of induction for Lemma \ref{blowup}. Recall that for a given integer $k \ge 0$ we also introduced the notation
\begin{equation*}
    M_{k}^{n} \equiv M_{k}^{n}(t, \|\rho_{n}\|_{L^{\infty}_{t}H^{k}_{x}}, \|u_{n}\|_{L^{\infty}_{t}H^{k}_{x}}, \|u_{n}\|_{L^{2}_{t}H^{k+1}_{x}}, \gamma_{n}, E_{n},  \overline{\rho_{n}}, (\underline{\rho_{n}})^{-1}),
\end{equation*} to denote an arbitrary function that is increasing with respect to the specified arguments, and which also satisfies $\sup_{t \in [0,T^{*})} M_{k}^{n} < + \infty$. The inductive step corresponds to the following proposition:
 \begin{proposition}
     Suppose $(\rho_{n}, u_{n})$ is a regular solution to \eqref{a1}-\eqref{a2} on $[0,T^{*})$ with initial data $(\rho_{n}^{0}, u_{n}^{0}) \in H^{k}(\mathbb{T}) \times H^{k}(\mathbb{T})$, where $n \in \mathbb{N}^{+}$ is fixed and $k > 2$. We assume that
     \begin{equation} \label{Mk1}
         \|\rho_{n}\|_{L^{\infty}_{t}H^{k-1}_{x}} + 
         \|u_{n}\|_{L^{\infty}_{t}H^{k-1}_{x}} + \|u_{n}\|_{L^{2}_{t}H^{k}_{x}} \le M_{k-2}^{n},
     \end{equation} Then,
     \begin{equation} \label{Mk4}
         \|\rho_{n}\|_{L^{\infty}_{t}H^{k}_{x}} + 
         \|u_{n}\|_{L^{\infty}_{t}H^{k}_{x}} + \|u_{n}\|_{L^{2}_{t}H^{k+1}_{x}} \le M_{k-1}^{n}.
     \end{equation}
 \end{proposition}
 \begin{proof} \label{Vinduction}
     Taking $l = k-1$ in \eqref{HOR2}, we have 
     \begin{align*}
        \frac{1}{2}\frac{d}{dt}\int_{\mathbb{T}} |\partial_{x}^{k-1}V_{n}|^{2}~dx &= - \int_{\mathbb{T}} \partial_{x}^{k-1} \left( (u_{n} + \frac{\lambda_{n}(\rho_{n})}{\rho_{n}^{2}}\partial_{x}\rho_{n}) \partial_{x}V_{n}\right) \partial_{x}^{k-1}V_{n}~dx + \int_{\mathbb{T}} \partial_{x}^{k-1} \left( \frac{\lambda_{n}(\rho_{n})}{\rho_{n}}\partial_{x}^{2}V_{n}\right)\partial_{x}^{k-1}V_{n}~dx \\[1ex]  &- \int_{\mathbb{T}} \partial_{x}^{k-1} \left( \frac{\lambda_{n}'(\rho_{n})\rho_{n} + \lambda_{n}(\rho_{n})}{(\lambda_{n}(\rho_{n}))^{2}}V_{n}^{2}\right)\partial_{x}^{k-1}V_{n}~dx.
     \end{align*} Integrating by parts and introducing commutator notation,
     \begin{align} \notag
         \frac{1}{2}\frac{d}{dt}&\int_{\mathbb{T}} |\partial_{x}^{k-1}V_{n}|^{2}~dx + \int_{\mathbb{T}} \frac{\lambda_{n}(\rho_{n})}{\rho_{n}}|\partial_{x}^{k}V_{n}|^{2}~dx  \\[1ex] \notag
     = & \int_{\mathbb{T}}  \left(u_{n} + \frac{\lambda_{n}(\rho_{n})}{\rho_{n}^{2}}\partial_{x}\rho_{n}\right) \partial_{x}^{k}V_{n} ~ \partial_{x}^{k-1}V_{n}~dx + \int_{\mathbb{T}} \left[ \partial_{x}^{k-2},~ u_{n} + \frac{\lambda_{n}(\rho_{n})}{\rho_{n}^{2}}\partial_{x}\rho_{n} \right] \partial_{x}V_{n} ~\partial_{x}^{k}V_{n}~dx \\[1ex] \notag & - \int_{\mathbb{T}} \left[\partial_{x}^{k-2}, \frac{\lambda_{n}(\rho_{n})}{\rho_{n}} \right] \partial_{x}^{2}V_{n}~ \partial_{x}^{k}V_{n}~dx - \int_{\mathbb{T}} \left(\frac{\lambda_{n}'(\rho_{n})\rho_{n} + \lambda_{n}(\rho_{n})}{(\lambda_{n}(\rho_{n}))^{2}}V_{n} \right) |\partial_{x}^{k-1}V_{n}|^{2}~dx \\[1ex] &- \int_{\mathbb{T}} \left[\partial_{x}^{k-1}, \frac{\lambda_{n}'(\rho_{n})\rho_{n} + \lambda_{n}(\rho_{n})}{(\lambda_{n}(\rho_{n}))^{2}}V_{n} \right] V_{n}\partial_{x}^{k-1}V_{n}~dx =: \sum_{n=1}^{5}I_{n}. \label{bu0}
     \end{align} We now estimate $I_{1} - I_{5}$.  Integrating by parts,
     \begin{align*}
         I_{1} &\le \left| \int_{\mathbb{T}} (\partial_{x}^{k-1}V_{n})^{2}(\partial_{x}u_{n} + \partial_{x} (\frac{\lambda(\rho_{n})}{\rho_{n}^{2}}\partial_{x}\rho_{n}))~dx\right| \\[1ex] &\le \|\partial_{x}^{k-1}V_{n}\|^{2}_{L^{2}_{x}}\left\{ \|\partial_{x}u_{n}\|_{L^{\infty}_{x}} + \|\partial_{x} (\rho_{n}^{-2}\lambda_{n}(\rho_{n}))\partial_{x}\rho_{n} \|_{L^{\infty}_{x}} + \| \rho_{n}^{-2}\lambda(\rho_{n})\partial_{x}^{2}\rho_{n}\|_{L^{\infty}_{x}} \right\} \\[1ex] &\le M_{k-1}^{n}\|\partial_{x}^{k-1}V_{n}\|^{2}_{L^{2}_{x}}.
     \end{align*} Noticing that $\frac{\lambda_{n}(\rho_{n})}{\rho_{n}^{2}}\partial_{x} \rho_{n} = \partial_{x} p_{n} (\rho_{n})$ and using the well-known Kato-Ponce commutator estimates \cite{KatoPonce},
     \begin{align*}
         I_{2} &= \int_{\mathbb{T}} \left[ \partial_{x}^{k-2}, u_{n}+\partial_{x} p_{n} (\rho_{n}) \right]\partial_{x}V_{n}~\partial_{x}^{k-1}V_{n}~dx \le \left\|\left[ \partial_{x}^{k-2}, u_{n}+\partial_{x} p_{n} (\rho_{n}) \right]\partial_{x}V_{n} \right\|_{L^{2}_{x}} \| \partial_{x}^{k}V_{n}\|_{L^{2}_{x}} \\[1ex] &\le
         \left(\|\partial_{x}^{k-2}(u_{n} + \partial_{x} p_{n} (\rho_{n}))\|_{L^{2}_{x}}\|\partial_{x}V_{n}\|_{L^{\infty}_{x}} + \|\partial_{x} (u_{n} + \partial_{x} p_{n} (\rho_{n}) )\|_{L^{\infty}_{x}} \left\| \partial_{x}^{k-1}V_{n}\right\|_{L^{2}_{x}}\right) \|\partial_{x}^{k}V_{n}\|_{L^{2}_{x}} \\[1ex] &\le M_{k-1}^{n} + M_{k-1}^{n}\|\partial_{x}^{k-1}V_{n}\|^{2}_{L^{2}_{x}} +  M_{k-1}^{n}\|\partial_{x}^{k-1}\rho_{n}\|_{L^{2}_{x}}^{2} + \epsilon \|\partial_{x}^{k}V_{n}\|_{L^{2}_{x}}^{2} \\[1ex] &\le M_{k-1}^{n} + M_{k-1}^{n}\|\partial_{x}^{k-1}V_{n}\|^{2}_{L^{2}_{x}} + \epsilon_{1} \|\partial_{x}^{k}V_{n}\|_{L^{2}_{x}}^{2}.
     \end{align*} Next,
     \begin{align*}
         I_{3} &\le \left| \int_{\mathbb{T}} \left[\partial_{x}^{k-2}, \frac{\lambda_{n}(\rho_{n})}{\rho_{n}} \right] \partial_{x}^{2}V_{n}~ \partial_{x}^{k}V_{n}~dx \right| \le \left\| \left[\partial_{x}^{k-2}, \frac{\lambda(\rho_{n})}{\rho_{n}} \right] \partial_{x}^{2}V_{n}\right\|_{L^{2}_{x}}\|\partial_{x}^{k}V_{n}\|_{L^{2}_{x}} \\[1ex] &\le \left( \left\|\partial_{x}^{k-2} (\rho_{n}^{-1}\lambda(\rho_{n}) ) \right\|_{L^{2}_{x}} \|\partial_{x}^{2}V_{n}\|_{L^{\infty}_{x}} + \|\partial_{x} (\rho_{n}^{-1}\lambda(\rho_{n}))\|_{L^{\infty}_{x}}\|\partial_{x}^{k-1}V_{n}\|_{L^{2}_{x}} \right)\|\partial_{x}^{k}V_{n}\|_{L^{2}_{x}} \\[1ex] &\le M_{k-1}^{n} + \epsilon_{2} \|\partial_{x}^{k}V_{n}\|^{2}_{L^{2}_{x}} +M_{k-1}^{n}\|\partial_{x}^{k-1}V_{n}\|^{2}_{L^{2}_{x}},
     \end{align*} It is also straightforward to verify that
     \begin{align*}
         I_{4} \le \left|\int_{\mathbb{T}} \left(\frac{\lambda_{n}'(\rho_{n})\rho_{n} + \lambda_{n}(\rho_{n})}{(\lambda_{n}(\rho_{n}))^{2}}V_{n} \right) |\partial_{x}^{k-1}V_{n}|^{2}~dx \right| \le M_{k-1}^{n}\|\partial_{x}^{k-1}V_{n}\|_{L^{2}_{x}}^{2}.
     \end{align*} Making use of commutator estimates once more, we have
     \begin{align} \notag
         I_{5} &\le \left|\int_{\mathbb{T}} \left[\partial_{x}^{k-1}, \frac{\lambda_{n}'(\rho_{n})\rho_{n} + \lambda_{n}(\rho_{n})}{(\lambda_{n}(\rho_{n}))^{2}}V_{n} \right] V_{n}\partial_{x}^{k-1}V_{n}~dx \right| = (\gamma_{n}+2)\left| \int_{\mathbb{T}} \left[\partial_{x}^{k-1},~ V_{n}(\lambda_{n}(\rho_{n}))^{-1} \right]V_{n} \partial_{x}^{k-1}V_{n}\right| \\[1ex] &\le (\gamma_{n}+2)\|\left[\partial_{x}^{k-1},~ V_{n}(\lambda_{n}(\rho_{n}))^{-1} \right]V_{n} \|_{L^{2}_{x}} \|\partial_{x}^{k-1}V_{n} \|_{L^{2}_{x}} \notag \\[1ex] 
         &\le (\gamma_{n}+2) \left( \| \partial_{x}^{k-1} (V_{n} (\lambda_{n}(\rho_{n}))^{-1})\|_{L^{2}_{x}}\|V_{n}\|_{L^{\infty}_{x}} + \| \partial_{x} (V_{n} (\lambda_{n}(\rho_{n}))^{-1}) \|_{L^{\infty}_{x}} \|\partial_{x}^{k-2} V_{n}\|_{L^{2}_{x}} \right)\|\partial_{x}^{k-1}V_{n} \|_{L^{2}_{x}}
         . \label{bu1}
     \end{align} We now note that
     \begin{equation*}
        \partial_{x}^{k-1}(V_{n}(\lambda_{n}(\rho_{n}))^{-1}) = \left[\partial_{x}^{k-1}, V_{n} \right](\lambda_{n}(\rho_{n}))^{-1} + \partial_{x}^{k-1}(\lambda_{n}(\rho_{n})^{-1})V_{n},
     \end{equation*} and moreover it is straightforward to deduce that
     \begin{align*}
         \| \left[\partial_{x}^{k-1}, V_{n} \right](\lambda_{n}(\rho_{n}))^{-1} \|_{L^{2}_{x}} &\le M_{k-1}^{n}\|\partial_{x}^{k-1}V_{n}\|_{L^{2}_{x}} + M_{k-1}^{n}\|\partial_{x}^{k-1}(\lambda_{n}(\rho_{n}))^{-1}\|_{L^{2}_{x}} \\[1ex]
         &\le M_{k-1}^{n}\|\partial_{x}^{k-1}V_{n}\|_{L^{2}_{x}} + M_{k-1}^{n}.
     \end{align*} Thus, we have
     \begin{equation*}
         \left\|\partial_{x}^{k-1} \left( V_{n}(\lambda_{n}(\rho_{n}))^{-1}) \right) \right\|_{L^{2}_{x}} \le M_{k-1}^{n} + M_{k-1}^{n}\|\partial_{x}^{k-1}V_{n}\|_{L^{2}_{x}}.
     \end{equation*} Going back to \eqref{bu1}, we then have
     \begin{align*}
         |I_{5}| &\le M_{k-1}^{n} + M_{k-1}^{n}\|\partial_{x}^{k-1}V_{n}\|_{L^{2}_{x}} + M_{k-1}^{n}\|\partial_{x}^{k-2}V_{n}\|_{L^{2}_{x}} \\[1ex]
         &\le M_{k-1}^{n} + M_{k-1}^{n}\|\partial_{x}^{k-1}V_{n}\|_{L^{2}_{x}}^{2}.
     \end{align*} Returning to \eqref{bu0} and using our newfound estimates for $I_{1} - I_{5}$, we have
     \begin{align*}
         \frac{1}{2}\frac{d}{dt}&\int_{\mathbb{T}} |\partial_{x}^{k-1}V_{n}|^{2}~dx + \int_{\mathbb{T}} \frac{\lambda_{n}(\rho_{n})}{\rho_{n}}|\partial_{x}^{k}V_{n}|^{2}~dx  \\[1ex]
         &\le M_{k-1}^{n}\|\partial_{x}^{k-1}V_{n}\|^{2}_{L^{2}_{x}} + M_{k-1}^{n}\|\partial_{x}^{k-1}V_{n}\|^{2}_{L^{2}_{x}} \\[1ex] &+ (\epsilon_{1}+\epsilon_{2}) \|\partial_{x}^{k}V_{n}\|^{2}_{L^{2}_{x}} +M_{k-1}^{n}\|\partial_{x}^{k-1}V_{n}\|^{2}_{L^{2}_{x}} + M_{k-1}^{n}\|\partial_{x}^{k-1}V_{n}\|_{L^{2}_{x}}^{2} + M_{k-1}^{n}\|\partial_{x}^{k-1}V_{n}\|_{L^{2}_{x}}^{2}+ M_{k-1}^{n} \\[1ex] &\le  M_{k-1}^{n} + (\epsilon_{1}+\epsilon_{2})\|\partial_{x}^{k}V_{n}\|^{2}_{L^{2}_{x}} + M_{k-1}^{n}\|\partial_{x}^{k-1}V_{n}\|_{L^{2}_{x}}^{2}.
     \end{align*} Thanks to the existence of the lower bound for $\rho_{n}$ on $[0,T^{*})$, we have that $\rho_{n}^{-1} \lambda_{n}(\rho_{n}) = \gamma_{n} \rho_{n}^{\gamma_{n}} \ge \gamma_{n} \underline{\rho_{n}}^{\gamma_{n}}$ and thus 
     \begin{align}
            \frac{1}{2}\frac{d}{dt}&\int_{\mathbb{T}} |\partial_{x}^{k-1}V_{n}|^{2}~dx + C \int_{\mathbb{T}} |\partial_{x}^{k}V_{n}|^{2}~dx  \le  M_{k-1}^{n}  + M_{k-1}^{n}\|\partial_{x}^{k-1}V_{n}\|_{L^{2}_{x}}^{2}, \label{bu4}
  \end{align} where $\epsilon_{i} > 0$ are chosen so that $C > 0$. This choice may depend on $n$. In particular, we have
     \begin{align*}
         \frac{d}{dt}\|\partial_{x}^{k-1}V_{n}\|^{2}_{L^{2}_{x}} \le M_{k-1}^{n}  + M_{k-1}^{n}\|\partial_{x}^{k-1}V_{n}\|_{L^{2}_{x}}^{2}. 
     \end{align*} An application of Gronwall's inequality gives us
     \begin{equation} \label{dxk-1V}
         \|\partial_{x}^{k-1}V_{n}\|_{L^{\infty}_{t}L^{2}_{x}} + \|\partial_{x}^{k}V_{n}\|_{L^{2}_{t,x}} \le M_{k-1}^{n}.
     \end{equation}
     Next, we attempt to estimate $\partial_{x}^{k}u_{n}$ and $\partial_{x}^{k+1}u_{n}$. Using the expression $V_{n} = \lambda_{n}(\rho_{n})\partial_{x}u_{n}$ we have
     \begin{equation*}
         \partial_{x}^{k}u_{n} = \partial_{x}^{k-1}(\lambda_{n}^{-1}V_{n}) = \lambda_{n}^{-1}\partial_{x}^{k-1}V_{n} + \left[\partial_{x}^{k-1}, \lambda_{n}^{-1} \right]V_{n}. 
     \end{equation*} Then using commutator estimates,
     \begin{align} \label{dxku1}
         \|\partial_{x}^{k}u_{n}\|_{L^{2}_{x}} \le \|\lambda_{n}^{-1}\|_{L^{\infty}_{x}}\|\partial_{x}^{k-1}V_{n}\|_{L^{2}_{x}} + \|\partial_{x}^{k-1}(\lambda_{n}^{-1})\|_{L^{2}_{x}}\|V_{n}\|_{L^{\infty}_{x}} + \|\partial_{x}\lambda_{n}^{-1}\|_{L^{\infty}_{x}}\|\partial_{x}^{k-2}V_{n}\|_{L^{2}_{x}}.
     \end{align} It is important to mention that by induction and the Sobolev inequality \eqref{Sinf} one can show that for $j\ge 1$, \begin{equation} \label{lambdakderiv}
         \|\partial_{x}^{j}\lambda_{n}(\rho_{n})\|_{L^{\infty}_{t}L^{2}_{x}} + \|\partial_{x}^{j}\lambda_{n}(\rho_{n})^{-1}\|_{L^{\infty}_{t}L^{2}_{x}} \le M_{j-1}^{n} + M_{j-1}^{n}\|\partial_{x}^{j}\rho_{n}\|_{L^{\infty}_{t}L^{2}_{x}}.
     \end{equation} Then going back to \eqref{dxku1} and using \eqref{dxk-1V} and the inductive hypothesis \eqref{Mk1} ,
     \begin{align}
     \begin{aligned}
         \|\partial_{x}^{k}u_{n}\|_{L^{\infty}_{t}L^{2}_{x}} &\le M_{k-1}^{n} + M_{k-1}^{n}\|\partial_{x}^{k-1}\rho_{n}\|_{L^{\infty}_{t}L^{2}_{x}} \le M_{k-1}^{n}. \label{dxku}
          \end{aligned}
     \end{align} Let's turn our attention to $\partial_{x}^{k+1}u_{n}$. In a similar fashion,
     \begin{equation}
         \partial_{x}^{k+1}u_{n} = \lambda_{n}^{-1}\partial_{x}^{k}V_{n} + \left[ \partial_{x}^{k}, \lambda_{n}^{-1} \right]V_{n}
     \end{equation} and so repeating the commutator estimates in the previous argument leads to
     \begin{align*}
         \|\partial_{x}^{k+1}u_{n}\|_{L^{2}_{x}} &\le M_{k-1}^{n}\|\partial_{x}^{k}V_{n}\|_{L^{2}_{x}} + M_{k-1}^{n}\|\partial_{x}^{k}(\lambda_{n})^{-1}\|_{L^{2}_{x}} + M_{k-1}^{n}\|\partial_{x}^{k-1}V_{n}\|_{L^{2}_{x}}.
     \end{align*} Squaring and integrating in time,
     \begin{align}
     \begin{aligned}
          \|\partial_{x}^{k+1}u_{n}\|_{L^{2}_{t,x}}^{2} &\le M_{k-1}^{n}\|\partial_{x}^{k}V_{n}\|_{L^{2}_{t,x}} + M_{k-1}^{n}\|\partial_{x}^{k}\rho_{n}\|_{L^{2}_{t}L^{2}_{x}} + M_{k-1}^{n}\|\partial_{x}^{k-1}V_{n}\|_{L^{2}_{t,x}} \\[1ex]
          &\le M_{k-1}^{n} + M_{k-1}^{n}\|\partial_{x}^{k}\rho_{n}\|_{L^{2}_{t}L^{2}_{x}}. \label{dxk+1u}\end{aligned}
     \end{align}
     Note that we are not yet able to bound $\|\rho_{n}\|_{L^{2}_{t}H^{k}_{x}}$. We now recall \eqref{HOR1}, which reads:
    \begin{align}
        \frac{1}{2}\frac{d}{dt} \|\partial_{x}^{k} \rho_{n}\|_{L^{2}_{x}}^{2} \le C\left( \|\partial_{x}^{k}u_{n}\|_{L^{2}_{x}} \|\partial_{x}^{k}\rho_{n}\|_{L^{2}_{x}}^{2} + \|\rho_{n}\|_{L^{\infty}_{x}} \|\partial_{x}^{k}\rho_{n}\|_{L^{2}_{x}}\|\partial_{x}^{k+1}u_{n}\|_{L^{2}_{x}}\right). 
    \end{align}
  Thanks to \eqref{dxk+1u} and \eqref{dxku}, we have
  \begin{equation*}
      \frac{1}{2}\frac{d}{dt} \|\partial_{x}^{k} \rho_{n}\|_{L^{2}_{x}}^{2} \le M_{k-1}^{n} \|\partial_{x}^{k}\rho_{n}\|_{L^{2}_{x}}^{2}
  \end{equation*} and therefore Gronwall's inequality gives us
  \begin{equation} \label{dxkrho}
      \|\partial_{x}^{k}\rho_{n}\|_{L^{\infty}_{t}L^{2}_{x}} \le M_{k-1}^{n}.
  \end{equation} Returning to \eqref{dxk+1u} and using this estimate also yields
  \begin{equation}
      \|\partial_{x}^{k+1}u_{n}\|_{L^{2}_{t,x}} \le M_{k-1}^{n}.
  \end{equation}
     Adding \eqref{dxku}, \eqref{dxkrho} and \eqref{dxk+1u}, 
    \begin{equation*}
         \|\partial_{x}^{k}u_{n}\|_{L^{\infty}_{t}L^{2}_{x}} + \|\partial_{x}^{k}\rho_{n}\|_{L^{\infty}_{t}L^{2}_{x}} + \|\partial_{x}^{k+1}u_{n}\|_{L^{2}_{t,x}} \le M_{k-1}^{n}.
    \end{equation*}
     Combining this with the inductive hypothesis \eqref{Mk1}, we have shown that
     \begin{equation} \label{bufinal}
         \|\rho_{n}\|_{L^{\infty}_{t}H^{k}_{x}} + \|u_{n}\|_{L^{\infty}_{t}H^{k}_{x}} + \|u_{n}\|_{L^{2}_{t}H^{k+1}_{x}} \le M_{k-1}^{n}.
     \end{equation} Note that in all of our estimates the notation $M_{k-1}^{n}$ was used to denote a function of time satisfying $\sup_{t \in [0,T^{*})} M_{k-1}^{n} < + \infty$. Therefore the right-hand side of \eqref{bufinal} also satisfies this condition. The proof is now complete.
 \end{proof}
 We conclude with a proof of a generalised Poincare inequality, similar to Proposition 7.2 of \cite{HCL}.
 \begin{proposition} \label{poincare}
     There exists $C \equiv C(M,K) >0$ such that
     \begin{equation} \label{poincareineq}
         \|u\|_{L^{1}_{x}} \le C \left( \|\partial_{x}u\|_{L^{1}_{x}} + \int_{\mathbb{T}}r|u|~dx  \right)
     \end{equation} for any $u \in W^{1,1}(\mathbb{T})$ and any non-negative $r \in L^{2}(\mathbb{T})$ satisfying
     \begin{equation} \label{poincarerassumptions}
         0 < M \le \int_{\mathbb{T}} r~dx < + \infty, \qquad \int_{\mathbb{T}} r^{2}~dx \le K,
     \end{equation} for some constants $M, K > 0$.
 \end{proposition}
 \begin{proof}
     We proceed by contradiction. If the proposition is false, then there exist sequences $\{u_{n}\}_{n=1}^{\infty}$, $\{r_{n}\}_{n=1}^{\infty}$ such that $\|u_{n}\|_{L^{1}_{x}} = 1$ (after suitable renormalisation), each $r_{n}$ satisfies \eqref{poincarerassumptions} and
     \begin{equation} \label{poincare1}
         \|\partial_{x}u_{n}\|_{L^{1}_{x}} + \int_{\mathbb{T}}r_{n}|u_{n}|~dx \le \frac{1}{n}.
 \end{equation} The assumption \eqref{poincarerassumptions} implies that there exists $r \in L^{2}(\mathbb{T})$ such that up to a subsequence, $r_{n} \rightharpoonup r$ weakly in $L^{2}(\mathbb{T})$. It is also easy to see that $\|u_{n}\|_{W^{1,1}(\mathbb{T})} \le 2$. Therefore due to the compact embedding $W^{1,1} \hookrightarrow \hookrightarrow L^{\infty}$ we have in particular that $u_{n} \to u$ strongly in $L^{2}(\mathbb{T})$. From \eqref{poincare1} we infer that $\partial_{x}u_{n} \to 0$ strongly in $L^{1}(\mathbb{T})$. Thus we may argue that $u_{n} \to u$ strongly in $W^{1,1}(\mathbb{T})$ with $\partial_{x}u=0$. Passing to the limit in the inequality appearing in \eqref{poincare1} and using the weak and strong $L^{2}$ convergences  of $r_{n}$  and $u_{n}$ respectively, we find that 
 \begin{equation}
     \int_{\mathbb{T}} r~dx = 0,
 \end{equation} which contradicts \eqref{poincarerassumptions}.
 \end{proof}
\section*{Acknowledgements}
The work of M.A. Mehmood is supported by the EPSRC Early Career Fellowship no. EP/V000586/1. The author would also like to thank the anonymous referee for providing valuable feedback which helped to improve the quality of the manuscript.
\end{appendices}
\printbibliography

\end{document}